\documentclass[a4paper,12pt]{amsart}
\usepackage{amsmath}
\usepackage{amssymb}
\usepackage{amsthm}
\usepackage{amsfonts}
\setcounter{page}{1} \pagestyle{myheadings} \thispagestyle{empty}
\usepackage[francais, english]{babel}



\textheight 22cm \textwidth 15cm \voffset=-0.5cm
\oddsidemargin=0.5cm \evensidemargin=0.5cm
\topmargin=-0.5cm


\DeclareMathSymbol{\subsetneqq}{\mathbin}{AMSb}{36}


\newcommand{\R}{\mathbb{R}}

\newcommand{\N}{\mathbb{N}}

\newcommand{\C}{\mathbb{C}}




\newcommand{\beq}{\begin{eqnarray}}
\newcommand{\eeq}{\end{eqnarray}}
\newcommand{\bq}{\begin{equation}}
\newcommand{\eq}{\end{equation}}
\newcommand{\beqn}{\begin{eqnarray*}}
\newcommand{\eeqn}{\end{eqnarray*}}
\newcommand{\bex}{\begin{exo}}
\newcommand{\eex}{\end{exo}}
\newcommand{\ben}{\begin{enumerate}}
\newcommand{\een}{\end{enumerate}}


\newtheorem{th1}{{\bf Theorem}}[section]
\newtheorem{thm}[th1]{{\bf Theorem}}
\newtheorem{lem}[th1]{{\bf Lemma}}
\newtheorem{prop}[th1]{{\bf Proposition}}

\newtheorem{rem}[th1]{\bf Remark}

\newtheorem{defi}[th1]{\bf Definition}



\author[T. Saanouni]{T. Saanouni}
\address{University of Tunis El Manar, Faculty of Science of Tunis, LR03ES04 partial differential Equations and applications, 2092 Tunis, Tunisia.}
\email{\sl Tarek.saanouni@ipeiem.rnu.tn}
\thanks{{\sf T. Saanouni} is grateful to the Laboratory of
PDE and Applications at the Faculty of Sciences of Tunis.}

\subjclass{35Q55}
\keywords{Nonlinear Schr\"odinger equation, well-posedness, Blow-up, Moser-Trudinger inequality, potential, ...}

\title[Global well-posedness and instability...]{Global well-posedness and instability of an inhomogeneous nonlinear Schr\"odinger equation with harmonic potential}

\date{\today}
\begin{document}
\begin{abstract}
This paper is concerned with the Cauchy problem for an inhomogeneous nonlinear Schr\"odinger equation with exponential growth nonlinearity and harmonic potential in two space dimensions. We prove global well-posedness, existence of the associated ground state and instability of the standing wave.
\end{abstract}


\maketitle
\tableofcontents
\vspace{ 1\baselineskip}
\renewcommand{\theequation}{\thesection.\arabic{equation}}
\section{Introduction}
Consider the initial value problem for a nonlinear Schr\"odinger equation
\begin{equation}\label{eq1}
\left\{
\begin{matrix}
i\dot u+\Delta u-|x|^2 u+ \epsilon |x|^\mu g(u)=0,\\
u_{|t=0}= u_0,
\end{matrix}
\right.
\end{equation}
where $\mu>0,\epsilon\in\{-1,1\}$ and $u$ is a complex-valued function of the variable $(t,x)\in \mathbb{R}\times {\mathbb{R}}^2$. The nonlinearity takes the Hamiltonian form $g(u):=uG'(|u|^2)$ for some regular positive real function $G$.\\
A solution $u$ of $\eqref{eq1}$, formally satisfies the conservation of mass and energy
\begin{gather}
M(t)= M(u(t)):=\|u(t)\|_{L^2}^2=M(0),\nonumber\\
E(t)= E(u(t)):=\|\nabla u(t)\|_{L^2}^2+\|xu\|_{L^2}^2-\epsilon \int_{\R^2} |x|^\mu G(|u(t)|^2)\, dx=E(0),\nonumber\\
\frac{1}8(\|xu(t)\|^2)''=\|\nabla u\|_{L^2}^2-\|xu\|_{L^2}^2-\epsilon\int_{\R^2}|x|^\mu\Big(\bar ug(u)-(1+\frac\mu2)G(|u|^2)\Big)dx.\label{vrl}
\end{gather}
The last equality is called Virial identity \cite{Cas}.
If $\epsilon=-1$, the energy is always positive and we say that \eqref{eq1} is defocusing. Else, \eqref{eq1} is said to be focusing. \\

Equation \eqref{eq1} models the Bose-Einstein condensates with the attractive inter-particle interactions under a magnetic trap \cite{bsh,fs,lp,tw,z}. The isotropic harmonic potential $|x|^2$ describes a magnetic field whose role is to confine the movement of particles \cite{bsh,fs,tw}.
In the monomial homogeneous case $\mu=0$ and $g(u)=u|u|^{p-1}$, for $1<p<\frac{n+2}{n-2}$ if $n\geq 3$ and $1<p<\infty$ if $n\in\{1,2\}$, local well-posedness in the conformal space was established \cite{o,Cas}.
By \cite{rm}, when $p<1+\frac4n$ or $p\geq 1+\frac4n$ and $\epsilon=-1$ the solution exists globally. For $p = 1 +\frac4n$, there exists a sharp condition \cite{zh}
of the global existence. When $p >1+\frac4n$, the solution blows up in a finite time for a class of sufficiently large data and globally exists for a class of sufficiently small data \cite{rm1,rm2,tw}.\\

 In two space dimensions, the semilinear monomial Schr\"odinger problem (\eqref{eq1} with $\mu=0$ and $g(u)=u|u|^{p-1}$) is energy subcritical for all $p>1$ \cite{G.V,Cas.F}. So it's natural to consider problems with exponential nonlinearities, which have several applications, as for example the self trapped beams in plasma \cite{Lam}. Moreover, the two dimensional case is interesting because of its relation to the critical Moser-Trudinger inequalities \cite{Ad,Ru}.\\

 The two dimensional semilinear Schr\"odinger problem with exponential growth nonlinearities was studied, for small Cauchy data in \cite{Na}, global well-posedness and scattering was proved. Later on global well-posedness and scattering in the defocusing sign was obtained for some critical case \cite{Col.I,I}, which is related to the data size. The author \cite{T2} obtained a decay result in the critical case. Recently, the author \cite{T} proved global well-posedness and scattering, without any condition on the data, of some defocusing semilinear Schr\"odinger equation with exponential nonlinearity (similar results was proved for corresponding wave equation \cite{OMTS,OMTS1}).\\
The focusing case is related to so called ground state \cite{sl,ww}. Indeed, when the data energy is less than the ground state one, the solution either blows-up in finite time or exists for any time \cite{km}. In this case the stability of standing waves is a natural question \cite{T3}. \\

 In the non homogeneous case, existence and nonexistence of blow-up solutions have been studied in \cite{mrl}, where the inhomogeneity takes the form $K(x)u|u|^{p-1}$ and $K$ is bounded. Moreover, the instability of standing waves was proved \cite{fw,ww} under some conditions on $K$. \\
Existence and nonexistence of solutions to the nonlinear Schr\"odinger problem \eqref{eq1} with $g(u)=u|u|^{p-1}$ was treated \cite{jc} in the radial case. Moreover, instability of the standing waves hold. \\
 
Our aim in this paper, is to extend in two space dimensions, results about global well-posedness, blow-up in finite time and stability of the standing waves, which hold \cite{jc} for any polynomial power, to an exponential growth nonlinearity.\\

The plan of the note is as follows. The main results and some technical tools needed in the sequel are listed in the next section. The third and fourth sections are devoted to prove well-posedness of \eqref{eq1}. The goal of the fifth section is to study the stationary problem associated to \eqref{eq1}. In the sixth section we prove either global well-posedness or blow-up in finite time of solution to \eqref{eq1} for data with energy less than the ground state one. In the last section we prove an instability result about standing waves. \\\\
In this paper, we are interested in the two space dimensions case, so, here and hereafter, we denote $\int\,.\,dx:=\int_{\R^2}\,.\,dx$. For $p\geq1$, $L^p:=L^p(\R^2)$ is the Lebesgue space endowed with the norm $\|\,.\,\|_{p}:=\|\,.\,\|_{L^p}$ and $\|\,.\,\|:=\|\,.\,\|_2$. $H^1$ is the usual Sobolev space endowed with the norm $\|\,.\,\|_{H^1}^2:=\|\,.\,\|^2+\|\,\nabla.\,\|^2$ and $H^1_{rd}$ denotes the set of radial functions in $H^1$. The conformal space is $\Sigma:=\{u\in H^1_{rd}$ s. t $\int|x|^2|u(x)|^2\,dx<\infty\}.$\\
For $T>0$ and $X$ an abstract space, we denote $C_T(X):=C([0,T],X)$ the space of continuous functions with variable in $[0,T]$ and values in $X$. We mention that $C$ is an absolute positive constant which may vary from line to line. If $A$ and $B$ are nonnegative real numbers, $A\lesssim B$ means that $A\leq CB$. Finally, we define the operator $(Df)(x):=xf'(x)$.
 \section{Background material}
In this section we give the main results and some technical tools needed in the sequel. Here and hereafter, we assume that near zero, $g\simeq r^q$ for some $q:=q_g>2+2\mu$. Let give some conditions on the nonlinearity, which will be useful along this paper. 
\begin{enumerate}
\item Ground state condition
\begin{equation}\label{f}
\mu>2,\quad q_g>3+2\mu,\quad (D-1)G>0\quad\mbox{and}\quad(D-1)^2G>0\quad\mbox{on}\quad\R_+^*.
\end{equation}
\item
Strong ground state condition
\begin{equation}\label{4}
\mu>2,\quad q_g>3+2\mu\quad\mbox{and}\quad\left\{
\begin{matrix}
\exists\varepsilon_g>0\quad\mbox{s.t}\quad (D-2-\frac\mu2-\varepsilon_g)G>0\quad\mbox{on}\quad\R_+^*,\\
(D-2-\frac\mu2)(D-1-\frac\mu2)G>0\quad\mbox{on}\quad\R_+^*.
\end{matrix}
\right.
\end{equation}
\item Subcritical case
\begin{equation}\label{sc}
\forall\alpha>0,\quad |G'''(r)|=o(e^{\alpha r})\quad\mbox{as}\quad r\rightarrow\infty.
\end{equation}
\item Critical case
\begin{equation}\label{cc}
\exists\alpha_g>0\;\mbox{ s.t}\;|G'''(r)|=O(e^{\alpha_g r})\quad\mbox{as}\quad r\rightarrow\infty.
\end{equation}
\end{enumerate}
We will say that the nonlinearity or the problem \eqref{eq1} is subcritical (respectively critical) if $G$ satisfies \eqref{sc} (respectively \eqref{cc}).
\begin{rem}\mbox{}\\
\begin{enumerate}
\item
Previous assumptions arise quite naturally, when we study the two dimensional Schr\"odinger problem \cite{Col.I,T,T3}.
\item
We should assume \eqref{sc} or \eqref{cc} in order to prove well-posedness of \eqref{eq1} and we will use $[$\eqref{f} or \eqref{4}$]$ with $[$\eqref{sc} or \eqref{cc}$]$ in order to obtain existence of a ground state for the stationary problem associated to \eqref{eq1}.
\item
We give explicit examples.
\begin{enumerate}
\item
Subcritical case:\quad $G(r):= {\rm e}^{(1+r)^{\frac{1}{2}}}-\frac e2r-e.$
\item Critical case:\quad $G(r)=e^r-1-r-\frac12r^2.$
\end{enumerate}
\end{enumerate}
\end{rem}
\begin{proof}
\begin{enumerate}
\item
For $t:=\sqrt{r+1}$, we have $G(r)=e^t-\frac e2t^2-\frac{e}2$. Thus, $DG(r)=\frac{t^2-1}{2t}(e^t-et)$. Compute, for $\varepsilon>0$,
\begin{gather*}
\phi(t):=2(D-1-\varepsilon)G(r)=e^t(t-\frac1t-2-2\varepsilon)+e(\varepsilon t^2+2+\varepsilon),\\
\phi'(t)=e^t(t-\frac1t+\frac1{t^2}-1-2\varepsilon)+2e\varepsilon t,\\
\phi''(t)=e^t(t-\frac1t+\frac2{t^2}-\frac2{t^3}-2\varepsilon)+2e\varepsilon \geq0.
\end{gather*}
Since $\phi(1)=\phi'(1)=0$, we have $\phi\geq 0$. Moreover,
\begin{gather*}
D(D-1)G(r)=\frac14e^t(t-\frac1t)(t-1-\frac1t+\frac1{t^2}),\\
(D-1)^2G(r)=\frac14[e^t(t^2-3t+2+\frac4t+\frac1{t^2}-\frac1{t^3})-4e],\\
[(D-1)^2-\varepsilon]G(r)=\frac14[e^t(t^2-3t+2-4\varepsilon+\frac4t+\frac1{t^2}-\frac1{t^3})+2e\varepsilon t^2+2\varepsilon e-4e]:=\frac14\psi(t),\\
\psi'(t)=e^t(t^2-t-1-4\varepsilon+\frac4t-\frac3{t^2}-\frac3{t^3}+\frac3{t^4})+4e\varepsilon t,\\
\psi''(t)=e^t(t^2+t-4\varepsilon+\frac4t-\frac7{t^2}+\frac3{t^3}+\frac{12}{t^4}-\frac{12}{t^5})+4e\varepsilon \geq 0.
\end{gather*}
Sine $\psi(1)=\psi'(1)=0$, we have $\psi\geq0.$
\item
Take $\varepsilon\in(0,2)$ and $G(x):=e^x-1-x-\frac{x^2}2$. Then $DG(x)=x(e^x-1-x)$,
\begin{gather*}
(D-1-\varepsilon)G(x)=(x-1-\varepsilon)e^x+(\varepsilon-1)\frac{x^2}2+\varepsilon x+1+\varepsilon:=\phi(x),\\
\phi'(x)=(x-\varepsilon)e^x+(\varepsilon-1)x+\varepsilon,\phi''(x)=(x-\varepsilon+1)e^x+\varepsilon-1,\\
\phi'''(x)=(x-\varepsilon+2)e^x\geq 0.
\end{gather*}
Since $\phi(0)=\phi'(0)=\phi''(0)=0$, we have $\phi\geq 0$. Moreover,
\begin{gather*}
(D-1)G(x)=(x-1)e^x-\frac{x^2}2+1,D(D-1)G(x)=x(xe^x-x),\\
(D-1)^2G(x)=(x^2-x+1)e^x-\frac{x^2}2-1,\\
[(D-1)^2-\varepsilon]G(x)=(x^2-x+1-\varepsilon)e^x+(\varepsilon-1)+(\varepsilon-1)\frac{x^2}2+\varepsilon x:=\psi(x),\\
\psi'(x)=(x^2+x-\varepsilon)e^x+(\varepsilon-1)x+\varepsilon,\psi''(x)=(x^2+3x-\varepsilon+1)e^x+\varepsilon-1,\\
\psi'''(x)=(x^2+5x-\varepsilon+4)e^x\geq 0.
\end{gather*}
Since $\psi(0)=\psi'(0)=\psi''(0)=0$, we have $\psi\geq 0$. This finishes the proof.
\end{enumerate}
\end{proof}
Results proved in this paper are listed in the following subsection.
\subsection{Main results}
The first result deals with well-posedness of \eqref{eq1}. We obtain existence of a unique global solution assuming that the nonlinearity satisfies \eqref{sc} or $[$\eqref{cc} with small data$]$. Let start with the subcritical case.
\begin{thm}\label{lwp'}
Assume that $G$ satisfies \eqref{sc} and take $u_0\in\Sigma$.  Then, there exist $T>0$ and a unique $u\in C_T(\Sigma)$ solution to the Cauchy problem \eqref{eq1}. Moreover,
\begin{enumerate}
\item
$u\in L^4_T(W^{1,4}(\R^2)),$
\item
$u$ satisfies conservation of the energy and the mass,
\item
in the defocusing case, $u$ is global.
\end{enumerate}
\end{thm}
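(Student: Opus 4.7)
The plan is to apply a Banach fixed-point argument to the Duhamel formulation
\[
\Phi(u)(t) := e^{-itH}u_0 + i\epsilon \int_0^t e^{-i(t-s)H}\bigl(|x|^\mu g(u(s))\bigr)\, ds,
\]
where $H := -\Delta + |x|^2$ is the Hermite operator, on the closed ball
\[
E_{T,R} := \bigl\{ u \in C_T(\Sigma) \cap L^4_T(W^{1,4}) : \|u\|_{C_T(\Sigma)} + \|u\|_{L^4_T W^{1,4}} \leq R \bigr\},
\]
with $R$ of order $\|u_0\|_\Sigma$ and $T$ small. The Mehler formula gives the dispersive estimate $\|e^{-itH}\|_{L^1\to L^\infty}\lesssim|\sin(2t)|^{-1}$, hence the usual Schr\"odinger-admissible Strichartz inequalities on any interval of length less than $\pi/4$; in particular $(4,4)$ is admissible in dimension two. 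Since $[H,\nabla]=-2x$ and $[H,x]=-2\nabla$, differentiating by $\nabla$ and multiplying by $x$ in Duhamel, then absorbing the cross terms via $T^{1/2}\|u\|_{L^\infty_T\Sigma}$, yields the coupled Strichartz inequality
\[
\|u\|_{L^\infty_T\Sigma}+\|u\|_{L^4_T W^{1,4}}+\|xu\|_{L^4_TL^4} \lesssim \|u_0\|_\Sigma + \|F\|_{L^{4/3}_T W^{1,4/3}}+\|xF\|_{L^{4/3}_T L^{4/3}}
\]
for solutions of $iu_t=Hu-F$, which is what is needed to bound $\Phi(u)$ in $E_{T,R}$.

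The decisive nonlinear ingredient is to control $F:=|x|^\mu g(u)$ together with $\nabla F$ and $xF$ in $L^{4/3}_TL^{4/3}$. Using $g(u)=uG'(|u|^2)$, the vanishing $g\simeq r^{q_g}$ near zero, and the subcritical assumption \eqref{sc}, one gets, for every prescribed $\alpha>0$,
\[
|g(u)|+|u|\,|g'(u)| \lesssim_\alpha |u|^q\bigl(1+e^{\alpha|u|^2}\bigr),\qquad q := q_g-1>2+2\mu.
\]
Splitting into $\{|x|\leq1\}\cup\{|x|\geq1\}$, the weight $|x|^\mu$ (and the extra factor $|x|$ in $xF$) is harmless near the origin, while on $\{|x|\geq1\}$ the radial Strauss estimate $|u(x)|\lesssim|x|^{-1/2}\|u\|_{H^1}$ combined with $\|xu\|<\infty$ converts $|x|^{\mu+1}$ into a summable decay thanks to the surplus power $q-(2+2\mu)>0$. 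The exponential factor is absorbed by the Moser--Trudinger inequality in $H^1_{rd}$, which gives $\|e^{\alpha|u|^2}-1\|_{L^p}\leq C(p,\alpha,R)$ uniformly on $\{\|u\|_{H^1}\leq R\}$ provided $\alpha R^2$ is small enough; \eqref{sc} permits us to take $\alpha$ arbitrarily small independently of $R$, which is precisely why no smallness of the datum is required. H\"older in time then produces a positive power $T^\theta$, giving $\|\Phi(u)\|_{E_{T,R}}\leq C\|u_0\|_\Sigma+C(R)T^\theta R$.

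The same estimates, applied via $|g(u)-g(v)|\lesssim(|u|+|v|)^{q-1}(1+e^{\alpha(|u|^2+|v|^2)})|u-v|$ to $\Phi(u)-\Phi(v)$, give a strict contraction on $E_{T,R}$ for $T$ small, producing the unique solution in the class $C_T(\Sigma)\cap L^4_T(W^{1,4})$; uniqueness in $C_T(\Sigma)$ alone follows from the same difference estimate. Radiality is preserved because $e^{-itH}$ commutes with rotations. Conservation of mass and energy is obtained by the standard regularisation procedure, smoothing $u_0$, using the smooth $L^2$ and energy identities, and passing to the limit thanks to the Strichartz bounds already proved. For $\epsilon=-1$ the positivity of $G$ makes the nonlinear part of $E$ nonnegative, so the conservation law yields a uniform control $\|\nabla u\|^2+\|xu\|^2\leq E(0)$, and the local theory is iterated to the whole real line. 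The main obstacle is the middle paragraph: converting the qualitative subcriticality \eqref{sc} into a quantitative Strichartz-dual estimate with the weight $|x|^\mu$ and the $x$-moment built in, where radiality and Strauss decay are critical.
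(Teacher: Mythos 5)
Your proposal is correct and follows essentially the same route as the paper: a contraction argument in $C_T(\Sigma)\cap L^4_T(W^{1,4})$ based on Strichartz estimates for the harmonic-oscillator propagator (with the $\nabla$- and $x$-commutator cross terms absorbed for small $T$), the radial Strauss/interpolation bound of Proposition \ref{sblv} to tame the weights $|x|^\mu$ and $|x|^{1+\mu}$, and Moser--Trudinger with an arbitrarily small exponent, which is exactly how \eqref{sc} removes any smallness condition on the datum; your choice of the dual pair $L^{4/3}_TL^{4/3}$ instead of the paper's $L^1_TL^2$ is immaterial. The only place you are lighter than the paper is uniqueness in the bare class $C_T(\Sigma)$: one must first show that any such solution automatically acquires the $L^4_T(W^{1,4})$ Strichartz regularity (the paper's auxiliary lemma in its uniqueness subsection) before the difference estimate can be run, but this follows from the same nonlinear bounds.
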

Let treat the critical case with small data.
\begin{thm}\label{lwp}
Assume that $G$ satisfies \eqref{cc} and take $u_0\in\Sigma$ such that $\|\nabla u_0\|^2<\frac{4\pi}{\alpha_g}$. Then, there exist $T>0$ and a unique $u\in C_T(\Sigma)$ solution to the Cauchy problem \eqref{eq1}. Moreover,
\begin{enumerate}
\item
$u\in L^4_T(W^{1,4}(\R^2)),$
\item
$u$ satisfies conservation of the energy and the mass,
\item
in the defocusing case, $u$ is global if $E(u_0)\leq\frac{4\pi}{\alpha_g}$.
\end{enumerate}
\end{thm}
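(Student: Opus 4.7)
The plan is to mirror the Duhamel-Strichartz-contraction strategy already set up for Theorem \ref{lwp'}, and to insert the smallness hypothesis $\|\nabla u_0\|^2<4\pi/\alpha_g$ precisely at the step where the critical exponential nonlinearity must be absorbed by a sharp Moser-Trudinger inequality. First I would recast \eqref{eq1} in its Duhamel form
$$u(t)=e^{-itH}u_0+i\epsilon\int_0^t e^{-i(t-s)H}\bigl(|x|^\mu g(u(s))\bigr)\,ds,\qquad H:=-\Delta+|x|^2,$$
and use Strichartz estimates for $e^{-itH}$ on the conformal space $\Sigma$, in particular on the two-dimensional admissible pair $(q,r)=(4,4)$, which is what delivers the promised $L^4_T(W^{1,4})$ regularity.

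Next, the contraction would be set up on the ball
$$B_{T,R,\kappa}:=\Bigl\{u\in C_T(\Sigma)\cap L^4_T(W^{1,4}):\ \|u\|_{C_T(\Sigma)}+\|\nabla u\|_{L^4_TL^4}\leq R,\ \sup_{t\in[0,T]}\|\nabla u(t)\|^2\leq\kappa\Bigr\},$$
with $\kappa$ fixed strictly between $\|\nabla u_0\|^2$ and $4\pi/\alpha_g$. The growth assumption \eqref{cc} together with $g\simeq r^{q_g}$ near zero produces pointwise bounds of the form $|g(u)|\lesssim|u|^{q_g}+|u|(e^{\alpha_g|u|^2}-1)$, with an analogous control on $g'(u)$. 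The sharp Adams-Ruf-type Moser-Trudinger inequality on $\R^2$, combined with mass conservation, then yields a uniform bound on $\int(e^{\alpha|u|^2}-1)\,dx$ valid for every $\alpha\leq 4\pi/\kappa$; the choice of $\kappa$ makes $\alpha=\alpha_g$ admissible. The weight $|x|^\mu$ is handled by splitting $\{|x|\leq 1\}\cup\{|x|>1\}$ and using the $L^2(|x|^2\,dx)$ component of the $\Sigma$-norm on the exterior region. Routine Strichartz manipulations then show that the Duhamel map is a self-map of $B_{T,R,\kappa}$ and a strict contraction for $T$ small enough; uniqueness, continuous dependence, and conservation of mass and energy follow by the same difference estimates, together with a regularization of the data in $H^2\cap\Sigma$, multiplication of \eqref{eq1} by $\bar u$ and $\partial_t\bar u$, integration by parts, and a limiting argument.

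For global existence in the defocusing case $\epsilon=-1$ under $E(u_0)\leq 4\pi/\alpha_g$, the energy
$$E(u)=\|\nabla u\|^2+\|xu\|^2+\int|x|^\mu G(|u|^2)\,dx$$
is a sum of non-negative terms, the last one being non-negative thanks to the growth hypotheses on $G$ discussed earlier. Energy conservation therefore yields $\|\nabla u(t)\|^2\leq E(u_0)\leq 4\pi/\alpha_g$ throughout the lifespan, with strict inequality as soon as $u_0\not\equiv 0$ (since $\|xu(t)\|^2=0$ would contradict mass conservation). This is sufficient to iterate the local theory and extend the solution to every time. The main obstacle throughout is the fixed-point step: securing a Moser-Trudinger estimate that is stable under the nonlinear flow. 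It is precisely here that the strict inequality $\|\nabla u_0\|^2<4\pi/\alpha_g$, which leaves the necessary gap below the Adams-Ruf threshold, becomes indispensable in the critical regime.
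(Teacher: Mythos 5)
Your plan reproduces the subcritical contraction scheme and inserts the hypothesis $\|\nabla u_0\|^2<\frac{4\pi}{\alpha_g}$ only through the single Moser--Trudinger bound $\int(e^{\alpha_g|u|^2}-1)\,dx\lesssim\|u\|^2$, but this is not where the critical difficulty lies, and the step you call ``routine Strichartz manipulations'' is exactly the step that fails. To close the fixed point you must estimate $\||x|^\mu g(u)\|_{L^1_T(L^2)}$, $\|\nabla(|x|^\mu g(u))\|_{L^1_T(L^2)}$ and the corresponding difference terms; after H\"older in the space variable the exponential factor $e^{(1+\varepsilon)\alpha_g|u|^2}-1$ must be placed in $L^p_x$ with $p\geq 4$, which amounts to controlling $\int(e^{p(1+\varepsilon)\alpha_g|u|^2}-1)\,dx$. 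This is beyond the Moser--Trudinger threshold as soon as $p\alpha_g\|\nabla u\|^2\geq 4\pi$, and your hypothesis only gives $\alpha_g\|\nabla u\|^2<4\pi$, not $p\alpha_g\|\nabla u\|^2<4\pi$; choosing $\kappa$ between $\|\nabla u_0\|^2$ and $4\pi/\alpha_g$ does nothing against this multiplication of the exponent. In the subcritical proof the analogous step is harmless because the exponent $\varepsilon_0$ can be taken as small as one wishes in terms of $\|u_0\|_\Sigma$; in the critical case it is pinned at $\alpha_g$. The paper's proof resolves precisely this point with Lemma \ref{exp}: H\"older is taken in the \emph{time} variable on $e^{(1+\varepsilon)\|u(t)\|_{L^\infty_x}^2}$, and the logarithmic inequality of Proposition \ref{log}, combined with the embedding $W^{1,4}(\R^2)\hookrightarrow C^{\frac12}(\R^2)$, converts this factor into $(1+\|u(t)\|_{W^{1,4}})^{\alpha}$ with some $\alpha<4$, which is then integrable in time thanks to the Strichartz norm $L^4_T(W^{1,4})$. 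Your proposal invokes neither this logarithmic estimate nor any substitute for it, so the self-map and contraction bounds are not actually obtained.

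There is a second gap in the global statement. When $E(u_0)<\frac{4\pi}{\alpha_g}$ your argument is the paper's: conservation gives the uniform bound $\sup_t\|\nabla u(t)\|^2\leq E(u_0)<\frac{4\pi}{\alpha_g}$, and the local theory iterates with a uniform time step. But the theorem also covers the borderline case $E(u_0)=\frac{4\pi}{\alpha_g}$, and there your remark that $\|\nabla u(t)\|^2<\frac{4\pi}{\alpha_g}$ strictly for each fixed $t$ (because $\|xu(t)\|>0$) is not sufficient: the local existence time produced by the critical theory degenerates as $\|\nabla u(t)\|^2$ approaches the threshold, so you need a gap that is uniform in $t$ up to $T^*$, which this pointwise inequality does not provide. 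The paper devotes a separate argument to this limit case: if $\limsup_{t\to T^*}\|\nabla u(t)\|^2=\frac{4\pi}{\alpha_g}$, then $\liminf_{t\to T^*}\int|x|^\mu G(|u(t)|^2)\,dx=0$, and the localized mass estimate of Lemma \ref{cnt} together with a weighted H\"older inequality (with the weight $|x|^\mu$ and a suitable exponent $p=\frac32+\mu$) forces $u_0=0$, a contradiction. Without an argument of this type, item (3) of the theorem is not proved at the endpoint $E(u_0)=\frac{4\pi}{\alpha_g}$.
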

If we omit the condition of small data size in the critical case, we still have a local solution.
\begin{thm}\label{per}
Assume that $G$ satisfies \eqref{cc} and take $u_0\in\Sigma$. Then, there exist $T>0$ and at least a solution $u$ to the Cauchy problem \eqref{eq1} in the class $C_T(\Sigma)$. Moreover, $u\in L^4_T(W^{1,4}(\R^2))$ and satisfies conservation of the energy and the mass.
\end{thm}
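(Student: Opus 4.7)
\medskip

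\noindent\textbf{Proof proposal.} The plan is to obtain the solution as a limit of subcritical approximations, for which Theorem \ref{lwp'} already gives a satisfactory Cauchy theory. Fix $u_0 \in \Sigma$ and, following the standard truncation for exponential growth, introduce for each integer $n\geq 1$ a regular cut-off $G_n$ of $G$ such that (i) $G_n = G$ on $[0,n]$, (ii) $G_n'''$ has at most polynomial growth, so that $G_n$ satisfies the subcritical condition \eqref{sc}, and (iii) $G_n$ still satisfies the Hamiltonian structure and the pointwise bounds needed for the energy. Theorem \ref{lwp'} then produces a sequence $u_n \in C_{T_n}(\Sigma) \cap L^4_{T_n}(W^{1,4})$ solving \eqref{eq1} with nonlinearity $g_n(u):=uG_n'(|u|^2)$, satisfying conservation of mass $M(u_n)=M(u_0)$ and of energy $E_n(u_n)=E_n(u_0)$.

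The next step is to obtain a common existence time $T>0$ and uniform bounds. Mass and energy conservation, combined with the sign structure built into $G$ (so that the focusing term is controlled by $\|u\|_{L^p}$ for some $p$ via Moser--Trudinger), yield a uniform bound $\|u_n\|_{L^\infty_T(\Sigma)} \leq C(E(u_0),M(u_0))$ for all sufficiently small $T$; the harmonic potential contribution $\|xu_n\|_2$ is controlled directly by $E_n$, and $\|\nabla u_n\|_2$ is controlled after absorbing the exponential term using a Moser--Trudinger inequality valid below the critical threshold on short times. Strichartz estimates for the harmonic oscillator on $[0,T]$ (as used in \cite{Cas}), together with the Moser--Trudinger inequality applied to $|x|^\mu g_n(u_n)$ in the mixed space $L^{4/3}_T(L^{4/3})$, give a uniform bound
\begin{equation*}
\|u_n\|_{L^4_T(W^{1,4})} + \|u_n\|_{C_T(\Sigma)} \leq C,
\end{equation*}
provided $T=T(\|u_0\|_\Sigma)$ is chosen small enough so that the Moser--Trudinger exponent times $\|u_n\|_{L^\infty_T H^1}^2$ stays below $4\pi$.

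With these uniform bounds in hand, up to extraction $u_n \rightharpoonup u$ weakly-$*$ in $L^\infty_T(\Sigma)$ and weakly in $L^4_T(W^{1,4})$. The compactness of the embedding $\Sigma \hookrightarrow L^p$ for every $p\geq 2$, coming from the confining potential $|x|^2$, together with the equation \eqref{eq1} used to control $\partial_t u_n$ in a weak topology, gives via an Aubin--Lions argument strong convergence $u_n \to u$ in $C_T(L^p)$ for every $p<\infty$, and a.e. convergence. To pass to the limit in the nonlinearity, one writes
\begin{equation*}
|x|^\mu g_n(u_n) - |x|^\mu g(u) = |x|^\mu(g_n(u_n)-g(u_n)) + |x|^\mu(g(u_n)-g(u)),
\end{equation*}
and treats each term separately: the first vanishes on the set $\{|u_n|^2 \leq n\}$ by construction, and on the complement the Moser--Trudinger inequality with the uniform $H^1$ bound controls it in $L^1_T(L^1_{loc})$; the second converges by dominated convergence, the a.e. convergence of $u_n$ and uniform Moser--Trudinger equi-integrability providing the majorant. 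This identifies $u$ as a distributional solution of \eqref{eq1}, and lower semicontinuity of the norms combined with the Strichartz bound yield $u \in C_T(\Sigma) \cap L^4_T(W^{1,4})$.

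Finally, the mass conservation is inherited directly from the $L^2$ convergence. For energy conservation one multiplies the equation by $\partial_t \bar u$, but since regularity is limited one instead passes to the limit in $E_n(u_n(t))=E_n(u_0)$: the kinetic and potential terms are lower semicontinuous (yielding one inequality) while the upper bound follows from the strong $L^p$ convergence and Moser--Trudinger equi-integrability applied to $\int|x|^\mu G_n(|u_n|^2)\,dx$. The main difficulty I anticipate is the passage to the limit in the critical nonlinearity without a smallness assumption: the Moser--Trudinger exponent $\alpha_g\|u_n\|_\infty^2$ can be arbitrarily close to (or above) the critical threshold $4\pi$, so the uniform equi-integrability of $|x|^\mu g_n(u_n)$ must be obtained on a short time interval $T=T(\|u_0\|_\Sigma)$, and this is what forces the statement to be purely local and the solution to be only a (not necessarily unique) limit point.
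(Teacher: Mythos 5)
There is a genuine gap at the heart of your scheme, and it is exactly the difficulty you flag in your last sentence without resolving it. Your uniform Strichartz bound on $\|u_n\|_{L^4_T(W^{1,4})}$ is claimed ``provided $T=T(\|u_0\|_\Sigma)$ is chosen small enough so that the Moser--Trudinger exponent times $\|u_n\|_{L^\infty_T H^1}^2$ stays below $4\pi$.'' But shrinking $T$ cannot achieve this: at $t=0$ one already has $\|u_n\|_{L^\infty_T H^1}\geq\|\nabla u_0\|$, so if $\alpha_g\|\nabla u_0\|^2\geq 4\pi$ --- which is precisely the regime Theorem \ref{per} is meant to cover beyond Theorem \ref{lwp} --- the quantity $\alpha_g\|u_n\|_{L^\infty_TH^1}^2$ is above the threshold for every $T>0$, and the Moser--Trudinger inequality gives no integrability for $e^{\alpha_g|u_n|^2}$. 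The same obstruction reappears in every later step that invokes ``uniform Moser--Trudinger equi-integrability'': the control of $\partial_t u_n$ for Aubin--Lions, the passage to the limit in $|x|^\mu g_n(u_n)$ on the set $\{|u_n|^2>n\}$ (that set has small measure, but the integrand there is exponentially large, so small measure alone does not make the integral small), and the recovery of energy conservation. Truncating the nonlinearity does not remove the problem, because the whole point of the uniform estimates is that they must not depend on $n$, and the only tool available to bound the exponential uniformly is the very inequality that fails for large data.

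The paper avoids this by decomposing the \emph{data} rather than the nonlinearity: $u_0=(u_0)_{<n}+(u_0)_{>n}$ via Littlewood--Paley, with $(u_0)_{<n}\in H^2\cap\Sigma$ and $(u_0)_{>n}\to0$ in $H^1$. The rough-but-large part is handled through the embedding $H^2\hookrightarrow L^\infty$, which tames the exponential pointwise regardless of the size of the $H^1$ norm; the remainder $w$ solves a perturbed equation with datum $(u_0)_{>n}$, whose $H^1$ norm can be made as small as one wishes, so that the small-data critical fixed-point argument of Theorem \ref{lwp} applies to it. This is why the paper obtains existence (but not uniqueness) for arbitrary data in the critical case. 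If you want to salvage a compactness approach, you would still need to inject this idea somewhere --- some mechanism that converts ``large $H^1$ data'' into ``bounded in $L^\infty$ plus small in $H^1$'' --- since no amount of time-localization or nonlinearity truncation supplies the missing exponential integrability.
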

Next, we are interested on the focusing Schr\"odinger problem \eqref{eq1}. This case is related to the associated stationary problem. Indeed under the condition \eqref{f} or \eqref{4}, we prove existence of a ground state $\phi$, in the meaning that 
$$\Delta\phi-\phi+|x|^2\phi+|x|^\mu G'(|\phi|^2)=0,\quad0\neq\phi\in \Sigma$$
and $\phi$ minimizes the problem
\begin{equation}\label{min}
m_{\alpha,\beta}:=\inf_{0\neq v\in{\Sigma}}\{S(v):=E(v)+ M(v) \quad\mbox{s. t}\quad K_{\alpha,\beta}(v)=0\},
\end{equation}
where, $\alpha,\beta\in\R$ and
$$\frac12K_{\alpha,\beta}(v):=\alpha\|\nabla v\|^2+(\alpha+\beta)\|v\|^2+(\alpha+2\beta)\|x\phi\|^2-\int|x|^\mu\Big[\alpha |v|g(|v|)+\beta(1+\frac\mu2)G(|v|^2)\Big]dx.$$
Precisely, we prove the result.
\begin{thm}\label{tgs}
Assume that $g$ satisfies \eqref{f} with $[$\eqref{sc} or \eqref{cc}$]$. Let two real numbers $(0,0)\neq(\alpha,\beta)\in\mathcal \R_+^2$. Thus, 
\begin{enumerate}
\item If $\alpha\beta\neq0$, then there is a minimizer of \eqref{min}, which is the energy of some solution to
\begin{equation}\label{gs}
\Delta \phi-\phi+|x|^2\phi+|x|^\mu g(\phi)=0,\quad 0\neq \phi\in \Sigma,\quad m_{\alpha,\beta}=S(\phi).
\end{equation}
\item
If $\beta=0$ and $\exists\varepsilon_g>0$ such that $(D-1-\varepsilon_g)G>0$ on $\R_+^*$. Then, there is a minimizer of \eqref{min}, which is the energy of some solution to \eqref{gs}. 
\item
If $\alpha=0$ then, there is a minimizer of \eqref{min}, which is the energy of some solution to the mass-modified equation
$$\Delta \phi-c\phi+d|x|^2\phi+|x|^\mu g(\phi)=0,\quad 0\neq \phi\in \Sigma,\quad m_{\alpha,\beta}=S(\phi).$$
\end{enumerate}
\end{thm}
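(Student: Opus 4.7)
I will prove Theorem~\ref{tgs} by the direct method of calculus of variations, and then identify the Euler-Lagrange equation by a Lagrange-multiplier argument whose vanishing is checked through an internal scaling. Fix a minimizing sequence $(v_n)\subset \Sigma$ with $K_{\alpha,\beta}(v_n)=0$ and $S(v_n)\to m_{\alpha,\beta}$. I first show that $(v_n)$ is bounded in $\Sigma$: using the constraint I write $S(v_n)=S(v_n)-\theta K_{\alpha,\beta}(v_n)$ for a suitable $\theta=\theta(\alpha,\beta)>0$, and the ground-state hypothesis \eqref{f} on $(D-1)G$ and $(D-1)^2G$ forces this combination to equal a positive-definite quadratic form in $(\|\nabla v_n\|,\|v_n\|,\|xv_n\|)$ plus a nonnegative nonlinear remainder; a complementary Moser-Trudinger lower bound gives $m_{\alpha,\beta}>0$. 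Up to a subsequence, $v_n\rightharpoonup \phi$ weakly in $\Sigma$.

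\textbf{Passage to the limit.} The conformal space $\Sigma$ embeds compactly in $L^p(\R^2)$ for every $2\leq p<\infty$, thanks to the harmonic confinement encoded in the $\|xv\|$ control. Combined with uniform Moser-Trudinger bounds and the Strauss-type radial decay available in $\Sigma$, this makes the integrals $\int |x|^\mu G(|v_n|^2)\,dx$ and $\int |x|^\mu |v_n|g(|v_n|)\,dx$ pass to the limit (the weight $|x|^\mu$ with $\mu>2$ is absorbed by the decay). Weak lower semi-continuity of the quadratic part then yields $S(\phi)\leq m_{\alpha,\beta}$ and $K_{\alpha,\beta}(\phi)\leq 0$. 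I rule out $\phi=0$: otherwise the nonlinear piece in $K_{\alpha,\beta}(v_n)$ vanishes in the limit and the remaining coercive quadratic form is forced to zero, contradicting $\liminf\|v_n\|_\Sigma>0$. If $K_{\alpha,\beta}(\phi)<0$, I rescale $\phi$ by a one-parameter family tailored to $(\alpha,\beta)$ (combined dilation and amplitude when $\alpha\beta\neq 0$; pure spatial dilation when $\beta=0$) so as to return to the constraint surface while strictly lowering $S$, a contradiction. Hence $K_{\alpha,\beta}(\phi)=0$ and $\phi$ attains the infimum.

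\textbf{Euler-Lagrange equation.} The minimizer satisfies $S'(\phi)=\eta\,K'_{\alpha,\beta}(\phi)$ for some multiplier $\eta$. Pairing this identity with the infinitesimal generator of the scaling family used above produces $K_{\alpha,\beta}(\phi)=\eta\,\mathcal{H}_{\alpha,\beta}(\phi)$, where $\mathcal{H}_{\alpha,\beta}$ is a quadratic form of strict sign under \eqref{f} (or under the sharper variant $(D-1-\varepsilon_g)G>0$ used in case (2)). Since $K_{\alpha,\beta}(\phi)=0$, this forces $\eta=0$ in cases (1) and (2), and $\phi$ solves \eqref{gs}. When $\alpha=0$, only the spatial dilation $x\mapsto\lambda x$ is admissible; it does not annihilate the mass and harmonic-potential contributions, so the Euler-Lagrange relation carries modified coefficients $c,d$ produced by the multiplier, which is precisely the mass-modified equation stated in item (3).

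\textbf{Main obstacle.} The chief difficulty is ensuring the strong convergence of the exponential-type nonlinearity along the minimizing sequence: it requires uniform control of the Moser-Trudinger exponent (automatic under \eqref{sc}, but delicate under \eqref{cc} where $\|\nabla v_n\|^2$ must remain below $4\pi/\alpha_g$) and crucially uses the decay induced by the harmonic potential. This is where the two-dimensional exponential setting truly departs from the polynomial framework of \cite{jc}, and where the extra case-by-case hypotheses on $G$ enter decisively.
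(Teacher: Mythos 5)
Your overall architecture coincides with the paper's: direct minimization on the constraint $K_{\alpha,\beta}=0$, coercivity of a combination $S-\theta K_{\alpha,\beta}$ under \eqref{f} (with the strengthened hypothesis $(D-1-\varepsilon_g)G>0$ precisely filling the gap left when $\beta=0$), compactness coming from radiality and the harmonic confinement, restoration of the constraint by the scaling flow when $K_{\alpha,\beta}(\phi)<0$, elimination of the Lagrange multiplier through the strict sign of the second scaling derivative, and a nonzero multiplier absorbed into modified coefficients $c,d$ when $\alpha=0$. The genuine gap is the critical case \eqref{cc}, which the theorem explicitly covers. You write that the Moser-Trudinger control is delicate under \eqref{cc} ``where $\|\nabla v_n\|^2$ must remain below $4\pi/\alpha_g$'', but you give no reason why a minimizing sequence should obey this bound, and in general it need not: nothing in the constraint or in your coercivity estimate keeps the gradient norm below the Moser-Trudinger threshold. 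Consequently both your passage to the limit in $\int|x|^\mu G(|v_n|^2)\,dx$ and $\int|x|^\mu |v_n|g(|v_n|)\,dx$, and your exclusion of $\phi=0$ (which needs $K^N_{\alpha,\beta}(v_n)\to0$ when $v_n\to0$ in $L^p$), are unjustified under \eqref{cc}: the estimate $\int|x|^\mu|v_n|^{q-1}\bigl(e^{\alpha_g|v_n|^2}-1\bigr)dx\lesssim\|v_n\|_{H^1}^{q-1}$ is only available when $\alpha_g p'\|v_n\|_{H^1}^2<4\pi$ for the H\"older exponent used, which cannot be guaranteed along the sequence.

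The paper resolves this with a device your plan lacks: it replaces $\phi_n$ by the amplitude rescalings $\phi_{n,\lambda}:=\lambda\phi_n$ with $\lambda$ small (depending on $\sup_n\|\phi_n\|_{H^1}$), so that the Moser-Trudinger inequality applies to $\phi_{n,\lambda}$ whatever the size of $\|\nabla\phi_n\|$; it shows $K^N_{\alpha,\beta}(\phi_{n,\lambda})\to0$ and is $o(\lambda^2)$, and then combines Lemma \ref{lm1} with the monotonicity of $H_{\alpha,\beta}$ along the scaling flow to come back to the constraint with smaller action and reach a contradiction. Some mechanism of this kind is indispensable; as written, your argument establishes the theorem only under \eqref{sc}. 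Two lesser points: the quantity you call $\mathcal H_{\alpha,\beta}(\phi)$ in the multiplier step is not a quadratic form; its strict sign comes from the pointwise inequalities of \eqref{f} applied to $G$ together with $m_{\alpha,\beta}=S(\phi)>0$, exactly as in the paper's computation of $\mathcal L^2S(\phi)$. And in case (3) you still must control the sign of the multiplier (the paper proves $2\eta-1<0$ from $m>0$ and $(D-1)G>0$) before the dilation producing the coefficients $c$ and $d$ yields a well-posed mass-modified equation.
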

The next result is about global existence or finite time blow-up of solution to the Schr\"odinger problem \eqref{eq1} with data in some stable sets. Here and hereafter, we denote for $\alpha,\beta\in\R$, the sets $A_{\alpha,\beta}^{+}:=\{v\in \Sigma\quad\mbox{s. t}\quad S(v)<m_{\alpha,\beta}\quad\mbox{and}\quad K_{\alpha,\beta}(v)\geq0\}$ and $ A_{\alpha,\beta}^{-}:=\{v\in \Sigma\quad\mbox{s. t}\quad S(v)<m_{\alpha,\beta}\quad\mbox{and}\quad K_{\alpha,\beta}(v)<0\}.$
\begin{thm}\label{sch}
Assume that $\epsilon=1$ and $g$ satisfies \eqref{4} with $[$\eqref{sc} or \eqref{cc}$]$. Let $u\in C_{T^*}(\Sigma)$ the maximal solution to \eqref{eq1}. Then,
\begin{enumerate}
\item
If there exist $(\alpha,\beta)\in\R_+^*\times\R_+\cup\{(1,-1)\}$ and $t_0\in[0,T^*)$ such that $u(t_0)\in A_{\alpha,\beta}^-$, then $u$ blows-up in finite time.
\item
If there exist $(\alpha,\beta)\in\R_+^*\times\R_+\cup\{(1,-1)\}$ and $t_0\in[0,T^*)$ such that $u(t_0)\in A_{\alpha,\beta}^+$, then $u$ is global.
\end{enumerate}
\end{thm}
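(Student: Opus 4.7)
The plan is to follow a Payne--Sattinger-type dichotomy adapted to the inhomogeneous focusing problem with harmonic potential. Three ingredients drive the proof: (i) the sets $A^{\pm}_{\alpha,\beta}$ are invariant under the flow of \eqref{eq1}; (ii) on $A^+_{\alpha,\beta}$, the action $S$ controls the full $\Sigma$-norm, giving global existence through the blow-up alternative of the local theory in Theorems \ref{lwp'}--\ref{per}; (iii) on $A^-_{\alpha,\beta}$, the virial identity \eqref{vrl} together with a strict sign gap on $K_{\alpha,\beta}$ forces $\|xu(t)\|^2$ to reach zero in finite time, which by Heisenberg forces $\|\nabla u(t)\|\to\infty$.

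I would first prove invariance. Since $S(u(t))=E(u_0)+M(u_0)$ is conserved, the inequality $S(u(t))<m_{\alpha,\beta}$ persists on the lifespan; if the trajectory ever left $A^{\pm}_{\alpha,\beta}$, continuity of $t\mapsto K_{\alpha,\beta}(u(t))$ on $\Sigma$ together with the intermediate value theorem would produce some $t_1$ with $K_{\alpha,\beta}(u(t_1))=0$, while mass conservation guarantees $u(t_1)\neq 0$, so $u(t_1)$ would be admissible for \eqref{min}, contradicting $S(u(t_1))<m_{\alpha,\beta}$. For global existence on $A^+_{\alpha,\beta}$, I would construct, using \eqref{4}, a multiplier $\lambda>0$ such that on $\{K_{\alpha,\beta}\ge0\}$ the combination $S-\lambda K_{\alpha,\beta}$ dominates the full $\Sigma$-norm: choosing $\lambda$ so that $\frac{1-2\lambda\beta(1+\mu/2)}{2\lambda\alpha}=2+\frac{\mu}{2}+\varepsilon_g$ makes the nonlinear remainder proportional to $\int|x|^\mu\bigl(D-2-\tfrac{\mu}{2}-\varepsilon_g\bigr)G(|v|^2)\,dx\ge 0$ by the first positivity in \eqref{4}, while the quadratic coefficients remain positive thanks to $\mu>2$ and the sign constraints $(\alpha,\beta)\in\R_+^*\times\R_+\cup\{(1,-1)\}$. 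Conservation of $S$ then yields $\sup_t\|u(t)\|_\Sigma<\infty$, hence $T^*=\infty$.

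For blow-up on $A^-_{\alpha,\beta}$, I would first handle the virial pair $(\alpha,\beta)=(1,-1)$, for which \eqref{vrl} with $\epsilon=1$ rewrites exactly as $(\|xu(t)\|^2)''=4K_{1,-1}(u(t))$. A dilation/variational argument --- parametrise through a one-parameter family $s\mapsto v^s$ designed so that $\partial_sS(v^s)$ is proportional to $K_{1,-1}(v^s)$, and exploit that $s\mapsto S(v^s)$ is strictly concave past its critical point under the second-order positivity in \eqref{4} --- yields the uniform gap $K_{1,-1}(u(t))\le -2\bigl(m_{1,-1}-S(u_0)\bigr)<0$. Hence $\|xu(t)\|^2$ is a positive function whose second derivative is bounded above by a negative constant, so it must vanish in finite time; the Heisenberg inequality $\|u_0\|^2\le 2\|xu(t)\|\,\|\nabla u(t)\|$ then forces $\|\nabla u(t)\|\to\infty$. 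The remaining pairs $(\alpha,\beta)\in\R_+^*\times\R_+$ reduce to this case by comparing thresholds: Theorem \ref{tgs} shows that minimizers of \eqref{min} solve the same stationary equation \eqref{gs} in all admissible cases, so the values $m_{\alpha,\beta}$ coincide and $A^-_{\alpha,\beta}\subseteq A^-_{1,-1}$.

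The main obstacle is the gap estimate $K\le -c<0$ on $A^-$: it is not contained in the open condition $K<0$, but requires a variational lower bound coming from the identification of the right one-parameter deformation and the verification that exactly the second-order positivity $(D-2-\tfrac{\mu}{2})(D-1-\tfrac{\mu}{2})G>0$ of \eqref{4} supplies the needed concavity along that deformation. A secondary but indispensable step, both for the reduction from the cone $\R_+^*\times\R_+$ to the virial pair $(1,-1)$ and for a clean application of Theorem \ref{tgs}, will be showing that $m_{\alpha,\beta}$ is independent of $(\alpha,\beta)$ within the admissible family.
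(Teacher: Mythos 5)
Your skeleton (flow-invariance of $A^{\pm}_{\alpha,\beta}$, coercivity of a linear combination of $S$ and $K_{\alpha,\beta}$ on $A^{+}$, virial identity plus a uniform negativity gap on $A^{-}$) is the same as the paper's, and the invariance argument and the global-existence half are sound; indeed your $(\alpha,\beta)$-adapted multiplier making $S-\lambda K_{\alpha,\beta}$ control $\|\cdot\|_\Sigma^2$ on $\{K_{\alpha,\beta}\ge0\}$ is a clean variant of the paper's route, which instead invokes the $(\alpha,\beta)$-independence of the sets (Proposition \ref{stb3}) and the coercivity of $T=S-\frac12K_{1,-1}$. The blow-up half, however, has two genuine gaps. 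The main one is your mechanism for the gap $K_{1,-1}(u(t))\le-2\bigl(m-S(u_0)\bigr)$: the only deformation with $\partial_s S(v^s)=K_{1,-1}(v^s)$ is $v^s=e^{s}v(e^{s}\cdot)$, and along it the quadratic part of the action is $e^{2s}\|\nabla v\|^2+\|v\|^2+e^{-2s}\|xv\|^2$, which is strictly convex in $s$ (and dominant as $s\to-\infty$), so $s\mapsto S(v^s)$ is not concave "past the critical point" for general $v$, and \eqref{4} cannot repair this: it only yields positivity of factored expressions such as $(D-2-\frac\mu2)(D-1-\frac\mu2)G$, i.e. of combinations like $(\mathcal L-2)(\mathcal L+2)S$ in which the convex quadratic contributions cancel, not of $-\partial_s^2S(v^s)$ itself. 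The paper obtains exactly your claimed bound, but by a different device: it first proves the variational characterization $m=\inf\{T(\phi):\ 0\neq\phi\in\Sigma,\ K_{1,-1}(\phi)\le0\}$ (Proposition \ref{enf}), built on the multiplicative deformation $\lambda\phi$, the monotonicity of $\lambda\mapsto T(\lambda\phi)$ (Lemma \ref{dec}, where the second positivity in \eqref{4} enters) and Lemma \ref{lm1'}; then the gap is the one-line estimate $m\le T(u(t))=S(u_0)-\frac12K_{1,-1}(u(t))$, valid whenever $K_{1,-1}(u(t))\le 0$. You would need this characterization (or an equivalent factored differential inequality) in place of plain concavity.

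The second gap is the reduction from $(\alpha,\beta)\in\R_+^*\times\R_+$ to the virial pair. Equality of the thresholds $m_{\alpha,\beta}$ does follow from the Pohozaev identities applied to a minimizer (as in the paper), but it does not imply $A^-_{\alpha,\beta}\subseteq A^-_{1,-1}$: a function with $S(v)<m$ and $K_{\alpha,\beta}(v)<0$ could a priori satisfy $K_{1,-1}(v)\ge0$. The paper proves the independence of the sets themselves (Proposition \ref{stb3}) by a separate topological argument: $A^+_{\alpha,\beta}\cup A^-_{\alpha,\beta}=\{S<m\}$ is common to all pairs, $A^+_{\alpha,\beta}$ is open, contains a neighborhood of the origin and is connected (contracted to zero along $e^{\alpha\lambda}\phi(e^{-\beta\lambda}\cdot)$ as $\lambda\to-\infty$), while $A^-_{\alpha,\beta}$ is open, so $\{S<m\}$ cannot be split in two different ways. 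Some argument of this type is indispensable; the inference from equal thresholds to nested (or equal) sets is a non sequitur as written.
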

The last result concerns nonlinear instability for the standing waves of the Schr\"odinger problem \eqref{eq1}.
\begin{defi}
Let $\phi$ a ground state to \eqref{gr}. The standing wave $e^{it}\phi$ is called orbitally stable if, for any $\varepsilon>0$, there exists $\sigma>0$ such that if
$ \inf_{\theta\in\R}\|u_0-e^{i\theta}\phi\|_\Sigma<\sigma$ then \eqref{eq1} has a global solution in $C(\R,\Sigma)$ satisfying $\sup_{t\in\R} \inf_{\theta\in\R}\|u(t)-e^{i\theta}\phi\|_\Sigma<\varepsilon$. Otherwise, the standing wave $e^{it}\phi$ is said to be nonlinearly instable.
\end{defi}
\begin{thm}\label{t2}
Let $\phi\in\Sigma$ a ground sate to \eqref{gs} such that
$$2\|\phi\|^2+\int|x|^\mu\Big[|\phi|^2g'(|\phi|)-(5+2\mu)|\phi|g(|\phi|)+(2+\mu)(1+\frac\mu2) G(|\phi|^2)\Big]\,dx>0.$$
Then the standing wave $e^{it}\phi$ is nonlinearly instable.
\end{thm}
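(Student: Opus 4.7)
The plan is to combine a mass-preserving scaling with the blow-up half of Theorem \ref{sch} to exhibit, arbitrarily close to $\phi$ in $\Sigma$, initial data whose solution blows up in finite time, thereby ruling out orbital stability.

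First I would introduce the $L^2$-preserving dilation $\phi_\lambda(x):=\lambda\phi(\lambda x)$, $\lambda>0$, note that $\phi_\lambda\to\phi$ in $\Sigma$ as $\lambda\to 1$, and set $F(\lambda):=S(\phi_\lambda)$. A change of variables gives
\[
F(\lambda)=\lambda^2\|\nabla\phi\|^2+\lambda^{-2}\|x\phi\|^2+\|\phi\|^2-\lambda^{-\mu-2}\int|y|^\mu G(\lambda^2\phi^2)\,dy,
\]
and a direct inspection shows $K_{1,-1}(\phi_\lambda)=\lambda F'(\lambda)$. Testing \eqref{gs} against $\phi$ and against the infinitesimal generator $\phi+x\cdot\nabla\phi$ of the dilation yields the two Pohozaev identities
\[
\|\nabla\phi\|^2+\|\phi\|^2+\|x\phi\|^2=\int|x|^\mu\phi^2 G'(\phi^2)\,dx,\qquad K_{1,-1}(\phi)=0,
\]
which give $F'(1)=0$ and, $\phi$ being a ground state, $F(1)=S(\phi)=m_{1,-1}$.

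Next I would compute $F''(1)$ explicitly. Differentiating twice and using the identity $g'(r)=G'(r^2)+2r^2G''(r^2)$ together with the two Pohozaev identities to trade $\int|x|^\mu G(\phi^2)\,dx$ and $\int|x|^\mu\phi^2G'(\phi^2)\,dx$ for the $H^1$-norms of $\phi$, after rearrangement one should recover
\[
F''(1)=-2\Big[2\|\phi\|^2+\int|x|^\mu\Big(|\phi|^2g'(|\phi|)-(5+2\mu)|\phi|g(|\phi|)+(2+\mu)(1+\tfrac{\mu}{2})G(|\phi|^2)\Big)dx\Big],
\]
which is strictly negative by the hypothesis. Taylor-expanding $F$ at $\lambda=1$ then gives, for every $\lambda>1$ sufficiently close to $1$, both $S(\phi_\lambda)<m_{1,-1}$ and, since $\partial_\lambda K_{1,-1}(\phi_\lambda)|_{\lambda=1}=F''(1)<0$, also $K_{1,-1}(\phi_\lambda)<0$. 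Hence $\phi_\lambda\in A^-_{1,-1}$.

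Finally, let $u^\lambda$ be the maximal $\Sigma$-solution to \eqref{eq1} launched from $\phi_\lambda$, furnished by Theorem \ref{lwp'} or \ref{lwp}. Conservation of mass and energy preserves $S(u^\lambda(t))=S(\phi_\lambda)<m_{1,-1}$; since $K_{1,-1}$ cannot vanish on $\{v\in\Sigma\setminus\{0\}:S(v)<m_{1,-1}\}$ by the very definition of $m_{1,-1}$, continuity along the flow propagates $K_{1,-1}(u^\lambda(t))<0$ throughout, so $u^\lambda(t)\in A^-_{1,-1}$. Theorem \ref{sch}(1) then forces $u^\lambda$ to blow up in finite time; letting $\lambda\downarrow 1$ so that $\|\phi_\lambda-\phi\|_\Sigma\to 0$ contradicts stability of $e^{it}\phi$. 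The main obstacle is the identification of $F''(1)$ with $-2$ times the quantity of the hypothesis: the derivatives of $\lambda\mapsto\int|y|^\mu G(\lambda^2\phi^2)\,dy$ produce several $G'$- and $G''$-integrals whose coefficients only collapse into the stated combination after substituting both Pohozaev identities and re-expressing $g$ through $G$.
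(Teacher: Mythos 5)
Your plan is correct, and its computational core coincides with the paper's: the identity you aim for, $\partial_{\lambda}^2E(\phi_{\lambda})_{|\lambda=1}=-2\big[2\|\phi\|^2+\int|x|^\mu\big(|\phi|^2g'(|\phi|)-(5+2\mu)|\phi|g(|\phi|)+(2+\mu)(1+\frac\mu2)G(|\phi|^2)\big)dx\big]$, is exactly Proposition \ref{p1}(2), which the paper derives from Proposition \ref{p0} together with $P(\phi)=0$ and the Pohozaev identity $K_{2,-1}(\phi)=0$ (by linearity of $(\alpha,\beta)\mapsto K_{\alpha,\beta}$ this is the sum of the two identities you propose to use), so the ``main obstacle'' you flag does resolve as you expect; likewise $K_{1,-1}(\phi_\lambda)=\lambda\,\partial_\lambda S(\phi_\lambda)$ is right and gives $P(\phi_\lambda)<0$ and $S(\phi_\lambda)<m_{1,-1}$ for $\lambda=1^+$. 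Where you genuinely diverge is the concluding mechanism. The paper does not pass through Theorem \ref{sch}: it works inside the tube $V_\varepsilon(\phi)$, uses the implicit function theorem (exploiting that $\phi$ is a minimizer) to make the constraint functional $K_{1,0}$ vanish after a nearby $L^2$-scaling, deduces $E(\phi)<E(v)+(\lambda-1)P(v)$ for $v$ in the tube with $\|v\|\leq\|\phi\|$, and from this a uniform bound $P(u(t))<-\sigma_0$ as long as the solution stays in the tube; assuming orbital stability this holds for all time and the virial identity gives the contradiction. You instead observe that $\phi_\lambda\in A_{1,-1}^-$ (using $S(\phi)=m_{1,-1}$, guaranteed by Proposition \ref{tgss} and Proposition \ref{stb3}(1)) and invoke the blow-up half of Theorem \ref{sch}, which contradicts the global existence built into the stability definition; the flow-invariance step you rederive is Proposition \ref{stb3}(2), with mass conservation excluding $u(t)=0$. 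Your route is shorter because it reuses Section 6 wholesale, and it is legitimate under the section's standing hypotheses ($\epsilon=1$, \eqref{4} with \eqref{sc} or \eqref{cc}); the paper's route is self-contained in the Berestycki--Cazenave style and yields the quantitative information that $P$ remains uniformly negative in the tube, so it delivers instability directly from the virial identity without appealing to the blow-up theorem.
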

We list in what follows some intermediate results.
\subsection{Tools}
This subsection is devoted to give some estimates needed along this paper. Let start with a classical tool to study Schr\"odinger problems which is the so-called Strichartz type estimate.
\begin{defi}
A pair $(q,r)$ of positive real numbers is said to be admissible if
$$2\leq r<\infty\quad\mbox{and}\quad \frac1q+\frac1r=\frac12.$$
\end{defi}
In order to control an eventual solution to \eqref{eq1}, we will use the following 
\begin{prop}\label{str}{\bf (Strichartz estimate \cite{Cas,rm})}
For any $T>0$, any admissible pairs $(q,r)$ and $(\alpha,\beta)$,
$$\|u\|_{L^q_T(L^r)}\leq C_{\alpha,T}\Big(\|u_0\|+\|i\dot u-|x|^2u+\Delta u\|_{L^{\alpha'}_T(L^{\beta'})}\Big).$$
\end{prop}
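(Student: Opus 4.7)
The plan is to reduce the claim to the classical Keel--Tao Strichartz machinery applied to the propagator of the harmonic oscillator $H:=-\Delta+|x|^2$. Writing the equation $i\dot u+\Delta u-|x|^2 u=F$ in the form $i\dot u=Hu+F$ and applying Duhamel's formula, one has
\[
u(t)=e^{-itH}u_0-i\int_0^t e^{-i(t-s)H}F(s)\,ds,
\]
so it suffices to prove two estimates: a homogeneous bound $\|e^{-itH}u_0\|_{L^q_T(L^r)}\lesssim_T \|u_0\|$ and an inhomogeneous bound $\|\int_0^t e^{-i(t-s)H}F(s)\,ds\|_{L^q_T(L^r)}\lesssim_T \|F\|_{L^{\alpha'}_T(L^{\beta'})}$ for all admissible pairs $(q,r),(\alpha,\beta)$.

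The key analytic input is the Mehler formula, which gives the explicit integral kernel of $e^{-itH}$ on $\R^2$ and yields the local-in-time dispersive estimate
\[
\|e^{-itH}f\|_{L^\infty(\R^2)}\le \frac{C}{|\sin(2t)|}\,\|f\|_{L^1(\R^2)},
\]
together with the trivial $L^2\to L^2$ unitarity $\|e^{-itH}f\|=\|f\|$. First I would partition the interval $[0,T]$ into finitely many subintervals $I_j=[a_j,a_{j+1}]$, chosen small enough and away from the singular times $\{k\pi/2:k\in\Z\}$ so that $|\sin(2(t-s))|\simeq |t-s|$ uniformly on $I_j\times I_j$; the number of pieces is controlled by $T$, which is where the $T$-dependent constant $C_{\alpha,T}$ enters. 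On each such $I_j$, the pair $(e^{-itH},e^{isH})$ satisfies exactly the same abstract dispersive/energy hypotheses as the free Schr\"odinger group in $\R^2$.

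Next, on each subinterval I would invoke the Keel--Tao abstract Strichartz theorem: the $TT^*$ argument converts the dispersive plus energy bounds into the homogeneous estimate $\|e^{-itH}u_0\|_{L^q(I_j;L^r)}\lesssim \|u_0\|$ for every admissible pair $(q,r)$ with $2\le r<\infty$ satisfying $\frac{1}{q}+\frac{1}{r}=\frac{1}{2}$, and by duality its adjoint version. The retarded (truncated) inhomogeneous estimate then follows from the untruncated bilinear one by the Christ--Kiselev lemma, which applies because all admissible pairs here satisfy $q>2$. Finally I would glue the pieces: summing the Strichartz bounds on each $I_j$ and propagating the $L^2$ norm by unitarity between subintervals produces the claimed inequality on $[0,T]$ with a constant depending on $T$ and on the choice of admissible pair.

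The main obstacle is the degeneration of the dispersive kernel at the instants $t=k\pi/2$, which forbids a global-in-time Strichartz estimate of the free-Schr\"odinger type; handling it cleanly requires the careful partitioning of $[0,T]$ described above. Everything else is standard and, as the proposition itself indicates, is contained in \cite{Cas,rm}, so in the paper I would present this as a brief sketch and cite those references for the detailed verification of the dispersive kernel and of the Keel--Tao/Christ--Kiselev steps.
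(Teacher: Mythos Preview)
Your outline is correct and is essentially the standard route to Strichartz estimates for the harmonic-oscillator Schr\"odinger group: Mehler kernel $\Rightarrow$ local dispersive bound with $|\sin(2t)|^{-1}$ decay, partition of $[0,T]$ into short subintervals to avoid the focusing times, abstract Keel--Tao $TT^*$ on each piece, Christ--Kiselev for the retarded term, and unitary propagation to glue. Nothing is missing.

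Note, however, that the paper does \emph{not} prove Proposition~\ref{str}: it is stated as a quoted result with references \cite{Cas,rm} and used as a black box. So there is no ``paper's own proof'' to compare against; your sketch simply supplies what the cited references contain. If anything, your write-up is already more detailed than what the paper intends here, and in the spirit of the paper a single sentence pointing to \cite{Cas,rm} (and perhaps to Keel--Tao for the abstract machinery) would suffice.
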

In order to estimate the quantity $\int G(|u|^2)\,dx$ which is a part of the energy, we will use Moser-Trudinger type inequalities \cite{Ad,Mo,Tr}.
\begin{prop}\label{prop3}{\bf(Moser-Trudinger inequality)}
Let $\alpha\in (0,4\pi)$, a constant $\mathcal{C}_{\alpha}$ exists such that
for all $u\in H^{1}$ satisfying $\|\nabla u\|\leq 1$, we have
$$\int\Big({\rm e}^{\alpha |u(x)|^{2}}-1\Big)dx\leq \mathcal{C}_{\alpha}\|u\|^{2}.$$
Moreover, this inequality is false if $\alpha\geq 4\pi$ and
$\alpha=4\pi$ becomes admissible if we take $\|u\|_{H^{1}}\leq 1$ rather than
$\|\nabla u\|\leq 1$. In this case
$$\mathcal K:=\displaystyle\sup_{\|u\|_{H^{1}}\leq 1}\int\Big({\rm e}^{4\pi |u(x)|^{2}}-1\Big)dx<\infty$$
and this is false for $\alpha>4\pi$. See \cite{Ru} for more details.
\end{prop}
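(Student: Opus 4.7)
The plan is to reduce to radial nonincreasing functions via Schwarz symmetrization, to treat the subcritical case $\alpha<4\pi$ by combining a Taylor-series expansion with $L^{2k}$ bounds of Moser type, and to handle the critical case $\alpha=4\pi$ under the full $H^1$-constraint by splitting the integral into interior and exterior pieces.

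First I would replace $u$ by its Schwarz symmetrization $u^\ast$. Since $u^\ast$ is equimeasurable with $|u|$ we have $\int(e^{\alpha|u|^2}-1)\,dx=\int(e^{\alpha(u^\ast)^2}-1)\,dx$ and $\|u^\ast\|=\|u\|$, while the P\'olya--Szeg\H{o} inequality yields $\|\nabla u^\ast\|\leq\|\nabla u\|$, so all hypotheses are preserved. Hence it suffices to treat radial nonincreasing $u$. For such $u$, integrating the identity $(s\,u(s)^2)'=u(s)^2+2s\,u(s)u'(s)$ from $r$ to infinity and applying Cauchy--Schwarz against the measures $s\,ds$ yields the crucial radial decay $|u(r)|^2\leq\pi^{-1}\|u\|\,\|\nabla u\|/r$ for $r>0$, which in particular gives $|u(r)|\leq C$ uniformly on $r\geq 1$ under $\|u\|_{H^1}\leq 1$.

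For the subcritical case the starting point is the polynomial bound $\int|u|^{2k}\,dx\leq C_0\,k!\,(4\pi)^{-(k-1)}\|\nabla u\|^{2(k-1)}\|u\|^2$, a consequence of Moser's classical inequality on a disk together with a rescaling/exhaustion argument. Expanding $e^{\alpha u^2}-1=\sum_{k\geq 1}\alpha^k u^{2k}/k!$ and summing the corresponding series produces
\[
\int(e^{\alpha u^2}-1)\,dx\leq C_0\alpha\|u\|^2\sum_{k\geq 1}\Bigl(\frac{\alpha\|\nabla u\|^2}{4\pi}\Bigr)^{k-1},
\]
which converges to $\mathcal C_\alpha\|u\|^2$ whenever $\alpha<4\pi$ and $\|\nabla u\|\leq 1$.

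The main obstacle is the critical case $\alpha=4\pi$, where the geometric series just fails and Moser's bounded-domain inequality cannot be applied directly since the boundary trace of $u$ on $\partial B_1$ need not vanish. Following Ruf's argument, I would split $\int=\int_{B_1}+\int_{\R^2\setminus B_1}$. On the exterior, the radial decay together with $\|u\|_{H^1}\leq 1$ gives $4\pi|u(r)|^2\leq C$, so the elementary bound $e^x-1\leq x e^x$ reduces the exterior piece to a constant multiple of $\|u\|^2\leq 1$. On $B_1$ I would set $v:=u-u(1)\in H^1_0(B_1)$, which satisfies $\|\nabla v\|=\|\nabla u\|$; then the pointwise bound $u^2\leq(1+\varepsilon)v^2+(1+\varepsilon^{-1})u(1)^2$ combined with the constraint $\|\nabla u\|^2\leq 1-\|u\|^2$ allows one to choose $\varepsilon>0$ so that $4\pi(1+\varepsilon)\|\nabla v\|^2\leq 4\pi$, after which Moser's inequality on the unit disk bounds $\int_{B_1}\exp\bigl(4\pi(1+\varepsilon)v^2\bigr)\,dx$ uniformly, while the radial decay controls $\exp\bigl(4\pi(1+\varepsilon^{-1})u(1)^2\bigr)\leq C$. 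Concatenating the two estimates proves $\mathcal K<\infty$. Finally, sharpness of the range of $\alpha$ (failure for $\alpha\geq 4\pi$ with only the gradient constraint, and for $\alpha>4\pi$ even with the full $H^1$ constraint) follows by testing on the Moser sequence $u_n(x):=(2\pi)^{-1/2}(\log n)^{-1/2}\min\bigl(\log n,\log(1/|x|)\bigr)\chi_{B_1}(x)$, which saturates the inequalities.
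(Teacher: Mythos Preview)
The paper does not supply its own proof of this proposition: it is stated as a background tool and referred to the literature (Adachi--Tanaka, Moser, Trudinger, and in particular Ruf for the endpoint $\alpha=4\pi$). So there is no in-paper argument to compare against. Your outline is precisely the standard route taken in those references: symmetric decreasing rearrangement, the sharp $L^{2k}$ bounds summed as a geometric series for $\alpha<4\pi$, Ruf's interior/exterior decomposition for $\alpha=4\pi$ under the full $H^1$ constraint, and the Moser sequence for optimality. In that sense your proposal is correct and matches the intended source.

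One point worth making explicit in your write-up of the critical case: the choice of $\varepsilon$ forcing $(1+\varepsilon)\|\nabla v\|^2\le 1$ becomes small when $\|\nabla u\|_{L^2(B_1)}$ is close to $1$, so that $(1+\varepsilon^{-1})$ blows up. The reason the factor $\exp\bigl(4\pi(1+\varepsilon^{-1})u(1)^2\bigr)$ nevertheless stays bounded is that your radial decay estimate, integrated from $r=1$ outward, actually gives $u(1)^2\le \pi^{-1}\|u\|_{L^2(\{|x|>1\})}\|\nabla u\|_{L^2(\{|x|>1\})}$, and under $\|u\|_{H^1}\le 1$ the exterior norms are bounded by the same gap $\delta:=1-\|\nabla u\|_{L^2(B_1)}^2$ that controls $\varepsilon$. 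The product $(1+\varepsilon^{-1})u(1)^2$ is then $O(1)$ uniformly. You have all the ingredients for this (your decay bound is already the exterior version), but the cancellation should be spelled out rather than asserted.
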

Despite the lack of injection of $H^1$ on the bounded functions set, we can control the $L^\infty$ norm by the $H^1$ norm and some H\"older norm with a logarithmic growth.
\begin{prop}\label{log}{\bf(Log estimate \cite{Ib1})}
Let $\beta\in ]0,1[$. For any $\lambda>\frac{1}{2\pi\beta}$, any $0<\omega\leq1$, a constant $C_{\lambda}$ exists such that, for any function $u\in (H^1\cap C^{\beta})(\R^2)$, we have
$$\|u\|_{L^{\infty}}^2\leq\lambda\|u\|_{\omega}^2\log\Big(C_{\lambda}+\frac{8^{\beta}\|u\|_{C^{\beta}}}{\omega^{\beta}\|u\|_{\omega}}\Big),$$
where $\|u\|_{\omega}^2:=\|\nabla u\|_{L^2(\R^2)}^2+\omega^2\|u\|_{L^2}^2.$
\end{prop}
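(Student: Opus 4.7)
The plan is to follow the Brezis--Gallou\"et--Wainger scheme adapted to the $C^\beta$ regularity scale: decompose $u$ in frequency space at a radius $R>0$ to be optimized, estimate the low-frequency part pointwise via the weighted norm $\|\cdot\|_\omega$, and control the high-frequency tail by the H\"older seminorm. Fix $x_0\in\R^2$ approximately realizing $\|u\|_\infty$ and write $u = u_R + u^R$ with $\widehat{u_R}=\chi_{|\xi|<R}\hat u$.

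For the low frequencies, Fourier inversion together with Cauchy--Schwarz against the weight $|\xi|^2+\omega^2$ yields
$$|u_R(x_0)|^2 \leq \frac{1}{(2\pi)^4}\int_{|\xi|<R}\frac{d\xi}{|\xi|^2+\omega^2}\cdot\int(|\xi|^2+\omega^2)|\hat u|^2\,d\xi = \frac{1}{4\pi}\log\Big(1+\frac{R^2}{\omega^2}\Big)\|u\|_\omega^2,$$
the numerical constant arising from the polar integral $\int_{|\xi|<R}(|\xi|^2+\omega^2)^{-1}d\xi=\pi\log(1+R^2/\omega^2)$. For the high frequencies, one views $u^R$ as the convolution of $u$ against a kernel $K_R$ of zero integral and characteristic scale $R^{-1}$, so H\"older continuity gives
$$|u^R(x_0)|\leq \|u\|_{C^\beta}\int |K_R(y)|\,|y|^\beta\,dy \lesssim R^{-\beta}\|u\|_{C^\beta},$$
with the multiplicative constant tunable to $8^\beta$ by a suitable choice of the smooth cutoff realizing $K_R$.

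Combining via $(a+b)^2\leq (1+\varepsilon)a^2+(1+\varepsilon^{-1})b^2$ and optimizing with $R=(\|u\|_{C^\beta}/\|u\|_\omega)^{1/\beta}$, so that $R^{-\beta}\|u\|_{C^\beta}=\|u\|_\omega$ and $\log(1+R^2/\omega^2)\leq (2/\beta)\log\bigl(C+8^\beta\|u\|_{C^\beta}/(\omega^\beta\|u\|_\omega)\bigr)$, produces
$$\|u\|_\infty^2 \leq \frac{1+\varepsilon}{2\pi\beta}\|u\|_\omega^2\log\Big(C+\frac{8^\beta\|u\|_{C^\beta}}{\omega^\beta\|u\|_\omega}\Big)+(1+\varepsilon^{-1})\|u\|_\omega^2.$$
Given $\lambda>1/(2\pi\beta)$, first choose $\varepsilon$ small so that $(1+\varepsilon)/(2\pi\beta)<\lambda$, then take $C_\lambda$ large enough that the residual additive term $(1+\varepsilon^{-1})\|u\|_\omega^2$ is dominated by $(\lambda-(1+\varepsilon)/(2\pi\beta))\|u\|_\omega^2\log C_\lambda$, absorbing it into the logarithm. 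The delicate point is the sharp leading constant $1/(2\pi\beta)$: both the Cauchy--Schwarz step and the passage from $\log(1+R^2/\omega^2)$ to $(2/\beta)\log(\cdot)$ must be carried out without losing constants, and the strict inequality $\lambda>1/(2\pi\beta)$ is exactly what accommodates the unavoidable slack coming from the $\varepsilon$-splitting and the logarithm-absorption step.
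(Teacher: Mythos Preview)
The paper does not prove this proposition; it is quoted from the reference \cite{Ib1} without argument. Your sketch follows exactly the Brezis--Gallou\"et--Wainger scheme used there, and the constant bookkeeping leading to the sharp threshold $\lambda>1/(2\pi\beta)$ is correct.

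There is one technical inconsistency worth fixing. You declare the decomposition via the \emph{sharp} cutoff $\widehat{u_R}=\chi_{|\xi|<R}\hat u$, but then, for the high-frequency piece, you appeal to ``a suitable choice of the smooth cutoff realizing $K_R$''. With the sharp cutoff the complementary kernel in $\R^2$ is built from $J_1$ and decays only like $|y|^{-3/2}$, so $\int |K_R(y)|\,|y|^\beta\,dy$ diverges once $\beta\geq 1/2$; the $C^\beta$ step then fails for half the stated range. You should take a smooth Littlewood--Paley cutoff $\psi(\xi/R)$ from the outset: the low-frequency Cauchy--Schwarz step becomes $\int \psi(\xi/R)^2(|\xi|^2+\omega^2)^{-1}d\xi\leq \pi\log(1+cR^2/\omega^2)$, which alters only the additive constant inside the logarithm and leaves the leading coefficient $1/(4\pi)$ intact, while the high-frequency kernel is now Schwartz and the H\"older estimate goes through for all $\beta\in(0,1)$. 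Finally, the assertion that the high-frequency constant can be tuned to exactly $8^\beta$ is neither justified nor necessary: whatever constant appears there is absorbed into $C_\lambda$ via the same logarithm-absorption step you already use for the additive $(1+\varepsilon^{-1})\|u\|_\omega^2$ term.
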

In the next section, we will use the $L^{\infty}$ logarithmic estimate for $\beta=\frac12$, coupled with the continuous Sobolev injection $W^{1,4}(\R^2)\hookrightarrow C^\frac12(\R^2)$.
The following absorption result will be useful.
\begin{lem}\label{abs}{\bf{(Bootstrap Lemma  \cite{taobk})}}
Let $T>0$ and $X\in C([0,T],\R_+)$ such that $$X\leq a+bX^{\theta},\mbox{ on } [0,T],$$
where, $a,b>0,\theta>1,a<(1-\frac{1}{\theta})\frac{1}{(\theta b)^{\frac{1}{\theta}}}$ and $X(0)\leq \frac{1}{(\theta b)^{\frac{1}{\theta-1}}}$. Then
$$X\leq\frac{\theta}{\theta -1}a, \mbox{ on } [0,T].$$
\end{lem}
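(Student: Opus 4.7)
The plan is to reformulate the hypothesis as a statement about where the real polynomial $h(x):=bx^\theta-x+a$ is nonnegative, combine this with continuity of $X$ to force $X(t)$ to remain in one connected component of $\{h\ge 0\}$, and finally show that this component lies below $\frac{\theta}{\theta-1}a$.

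First, I would study $h$ on $\R_+$. Its derivative $h'(x)=b\theta x^{\theta-1}-1$ vanishes uniquely at the point $x^\star:=(b\theta)^{-1/(\theta-1)}$, and $h''>0$, so $x^\star$ is a global minimum. A short computation gives
\[
h(x^\star)=a-\Bigl(1-\tfrac1\theta\Bigr)\,\frac{1}{(\theta b)^{1/(\theta-1)}},
\]
which is strictly negative by the hypothesis on $a$. Since $h(0)=a>0$ and $h(x)\to+\infty$, there exist exactly two roots $0<x_1<x^\star<x_2$, and the set $\{h\ge 0\}\cap\R_+$ is the disjoint union $[0,x_1]\cup[x_2,+\infty)$.

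Next I would use the hypothesis on $X(0)$. The inequality $X\le a+bX^\theta$ is exactly $h(X)\ge 0$, so $h(X(t))\ge 0$ on $[0,T]$. Moreover, $X(0)\le (\theta b)^{-1/(\theta-1)}=x^\star$, so $X(0)\in[0,x_1]$. Because $X$ is continuous, the image $X([0,T])$ is connected in $\R_+$ and contained in $[0,x_1]\cup[x_2,+\infty)$; the open gap $(x_1,x_2)$ cannot be crossed. Therefore $X(t)\in[0,x_1]$ for every $t\in[0,T]$.

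Finally I would upgrade the bound $X\le x_1$ to $X\le\frac{\theta}{\theta-1}a$ by showing $x_1\le\frac{\theta}{\theta-1}a$, equivalently $h\!\left(\tfrac{\theta a}{\theta-1}\right)\le 0$. Evaluating,
\[
h\!\left(\tfrac{\theta a}{\theta-1}\right)=b\Bigl(\tfrac{\theta a}{\theta-1}\Bigr)^\theta-\tfrac{a}{\theta-1},
\]
so the inequality reduces to $a^{\theta-1}\le\frac{(\theta-1)^{\theta-1}}{b\,\theta^{\theta}}$, which is exactly the hypothesis $a<\bigl(1-\tfrac1\theta\bigr)(\theta b)^{-1/(\theta-1)}$ raised to the power $\theta-1$. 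Hence $\tfrac{\theta a}{\theta-1}\in[x_1,x_2]$, so $x_1\le\tfrac{\theta a}{\theta-1}$ and the conclusion follows.

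The only real subtlety is the continuity/connectedness step: without it, one only obtains $X(t)\in[0,x_1]\cup[x_2,+\infty)$ pointwise, which is not enough. The algebraic checks — locating $x^\star$, the sign of $h(x^\star)$, and the sign of $h(\tfrac{\theta a}{\theta-1})$ — are straightforward once the two hypotheses on $a$ and $X(0)$ are rewritten in terms of $h$ and $x^\star$.
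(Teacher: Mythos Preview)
The paper does not give its own proof of this lemma; it is simply quoted from Tao's book and used as a black box. Your argument is the standard one and is correct in structure: study the strictly convex function $h(x)=bx^{\theta}-x+a$, locate its unique minimum at $x^{\star}=(\theta b)^{-1/(\theta-1)}$, use the hypothesis on $a$ to ensure $h(x^{\star})<0$ so that $\{h\ge 0\}$ splits into two components, and then use continuity of $X$ together with $X(0)\le x^{\star}$ to trap $X$ in the lower component $[0,x_1]$; finally check $h\bigl(\tfrac{\theta}{\theta-1}a\bigr)\le 0$ to conclude $x_1\le\tfrac{\theta}{\theta-1}a$.

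One point worth flagging: in steps 3 and in the final algebraic check you invoke the hypothesis on $a$ in the form $a<(1-\tfrac{1}{\theta})(\theta b)^{-1/(\theta-1)}$, whereas the statement as written in the paper has the exponent $\tfrac{1}{\theta}$ rather than $\tfrac{1}{\theta-1}$. Your computation of $h(x^{\star})=a-(1-\tfrac{1}{\theta})(\theta b)^{-1/(\theta-1)}$ is correct, and the exponent $\tfrac{1}{\theta-1}$ is what is actually needed for the argument (and is what appears in the standard formulation of the bootstrap lemma); the $\tfrac{1}{\theta}$ in the paper's statement appears to be a typographical slip. Under the paper's hypothesis as literally written, neither $h(x^{\star})<0$ nor $h\bigl(\tfrac{\theta}{\theta-1}a\bigr)\le 0$ follows in general (the two bounds are not comparable without knowing the sign of $\theta b-1$), so your silent correction is the right move.
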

We close this section with the following result.
\begin{prop}\label{prp2''}{\bf (Sobolev embedding \cite{AC1,jc})}\label{sblv}
In two space dimensions
\begin{enumerate}
\item
$W^{s,p}\hookrightarrow L^q$, for any $1<p<q<\infty,s>0$ such that $\frac{1}{p}\leq\frac{1}{q}+\frac{s}{2}$.
\item
$|u(x)|\lesssim \frac{\|u\|_{H^1}}{\sqrt{|x|}}$, for all $u\in H^1_{rd}$ and almost all $|x|>0$.
\item
$\||x|^bu^p\|_1\lesssim \|\nabla u\|^{p-2-b}\|u\|^{2+b}$ for all $u\in H^1_{rd}$, $b\geq0$ and $p>2+2b$.
\end{enumerate}
\end{prop}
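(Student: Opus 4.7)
The plan is to dispatch parts (1) and (2) with classical tools and then combine them with the standard Gagliardo-Nirenberg inequality in $\R^2$ to obtain (3). Part (1) is just the usual Sobolev embedding theorem: the hypothesis $\tfrac{1}{p} \leq \tfrac{1}{q} + \tfrac{s}{2}$ is precisely $q \leq p^{*}$ with $\tfrac{1}{p^{*}} = \tfrac{1}{p} - \tfrac{s}{2}$, and the embedding follows from the Riesz potential representation $u = I_{s}f$ with $\|f\|_{p} \sim \|u\|_{W^{s,p}}$ together with Hardy-Littlewood-Sobolev; I would just cite it.

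For part (2), I reduce to smooth compactly supported radial functions by density and work in polar coordinates with $u = u(r)$. Computing $\tfrac{d}{ds}\bigl(s u(s)^{2}\bigr) = u^{2} + 2s u u'$ and integrating from $r$ to $\infty$ (the boundary term at infinity vanishes on test functions) gives
\[
r u(r)^{2} \leq 2\int_{0}^{\infty} s\,|u(s)||u'(s)|\,ds \leq \frac{1}{\pi}\,\|u\|_{2}\,\|\nabla u\|_{2} \leq \frac{1}{2\pi}\|u\|_{H^{1}}^{2},
\]
where the middle inequality is Cauchy-Schwarz against the radial Lebesgue measure $s\,ds$. Taking square roots yields the stated pointwise bound.

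For part (3), which is the only piece requiring a genuine argument, the key observation is that the radial weight $r^{b}$ can be absorbed exactly by applying (2) to a factor $|u|^{2b}$. Passing to polar coordinates,
\[
\int_{\R^{2}} |x|^{b}\,|u|^{p}\,dx = 2\pi \int_{0}^{\infty} r^{b+1}\, |u|^{2b} \cdot |u|^{p-2b}\,dr,
\]
and (2) gives $|u(r)|^{2b} \leq C\,(\|u\|_{2}\,\|\nabla u\|_{2})^{b}\,r^{-b}$, which kills the weight $r^{b}$ and leaves $\int_{0}^{\infty} r\,|u|^{p-2b}\,dr = \tfrac{1}{2\pi}\|u\|_{L^{p-2b}}^{p-2b}$. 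The hypothesis $p > 2 + 2b$ guarantees $q := p - 2b > 2$, so the classical 2D Gagliardo-Nirenberg inequality $\|u\|_{L^{q}}^{q} \lesssim \|u\|_{2}^{2}\,\|\nabla u\|_{2}^{q-2}$ applies and, after multiplying exponents, gives exactly the claimed bound $\|u\|_{2}^{2+b}\,\|\nabla u\|_{2}^{p-2-b}$.

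The only mild technical point is justifying that $s\,u(s)^{2} \to 0$ at infinity in part (2), which I would settle by approximation from $C^{\infty}_{c}$ and the fact that $s u(s)^{2}$ converges in $L^{1}_{\text{loc}}$ to zero on the tail for $H^{1}_{rd}$ functions. I expect no real obstacle in (3); its entire content is that the split $|u|^{p} = |u|^{2b}\cdot |u|^{p-2b}$ is tuned so that part (2) absorbs exactly the radial weight $r^{b}$ while leaving a Gagliardo-Nirenberg admissible exponent $q = p - 2b \geq 2$.
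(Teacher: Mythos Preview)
Your argument is correct in all three parts. The computation in (2) works because the discarded term $-\int_r^\infty u^2\,ds$ has the right sign, and the splitting $|u|^p=|u|^{2b}\cdot|u|^{p-2b}$ in (3) is exactly calibrated so that the pointwise radial decay from (2) cancels the weight and the remaining exponent $q=p-2b>2$ falls in the Gagliardo--Nirenberg range; the exponent bookkeeping $(\|u\|\,\|\nabla u\|)^b\cdot\|u\|^2\|\nabla u\|^{q-2}=\|u\|^{2+b}\|\nabla u\|^{p-2-b}$ is clean.

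One point of comparison: the paper does not actually prove this proposition. It is stated with citations to Adams' book and to Chen's paper and used as a black box throughout the well-posedness and ground-state sections. So there is no ``paper's own proof'' to compare against; you have supplied a self-contained proof where the paper simply invokes the literature. Your approach to (2) is the classical Strauss radial lemma in dimension two, and your derivation of (3) from (2) plus Gagliardo--Nirenberg is the standard route (and is essentially how the cited reference of Chen obtains it). Nothing needs to be changed.
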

\section{Well-posedness in  the subcritical case}
This section is devoted to prove Theorem \ref{lwp'} about global well-posedness of the nonlinear Schr\"odinger problem \eqref{eq1} in the subcritical case. So, we assume in all this section that \eqref{sc} is satisfied.\\ Let us identify $\C$ with $\R^2$ and $g$ with a function defined on $\R^2$. Denote by $\mathcal D g$ the $\R^2$ derivative of the identified function. Then using \eqref{sc}, the
mean value theorem and the convexity of the exponential function, we derive the
following  property
\begin{lem}\label{g'}
There exists $q:=q_g>2+2\mu$ such that for any $\varepsilon >0$, there exists $C_\varepsilon>0$ satisfying
\begin{gather*}
|g(z_1)-g(z_2)|\leq C_{\varepsilon} |z_1-z_2|\sum_{i=1}^2|z_i|^{q-1}e^{ \varepsilon|z_i|^2},\quad\forall z_1,z_2\in\C,\\
|\mathcal Dg(z_1)-\mathcal Dg(z_2)|\leq C_{\varepsilon}|z_1-z_2|\sum_{i=1}^2|z_i|^{q-2}{e}^{ \varepsilon|z_i|^2},\quad\forall z_1,z_2\in\C.
\end{gather*}
\end{lem}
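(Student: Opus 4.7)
The plan is to reduce both estimates to pointwise bounds on $\mathcal{D}g$ and $\mathcal{D}^2g$ viewed as maps on $\R^2$, and then apply the mean value theorem on the segment $[z_1,z_2]$. First I identify $g$ with the map $(a,b)\mapsto(aG'(a^2+b^2),bG'(a^2+b^2))$. Direct differentiation shows the entries of $\mathcal{D}g$ are linear combinations of $G'(|z|^2)$ and $G''(|z|^2)$ with polynomial coefficients of degree at most $2$ in the components of $z$, and the entries of $\mathcal{D}^2g$ similarly involve $G'$, $G''$, $G'''$ with polynomial factors of degree at most $3$.

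The core of the argument is the pointwise estimate, for every $\varepsilon>0$ and every $z\in\C$,
\begin{gather*}
|\mathcal{D}g(z)|\leq C_\varepsilon|z|^{q-1}e^{\varepsilon|z|^2},\qquad |\mathcal{D}^2g(z)|\leq C_\varepsilon|z|^{q-2}e^{\varepsilon|z|^2}.
\end{gather*}
Near the origin, the hypothesis $g\simeq r^q$ forces $G'(s)$ to vanish like $s^{(q-1)/2}$, so substitution into the explicit formulas of the first step gives $|\mathcal{D}g(z)|\lesssim |z|^{q-1}$ and $|\mathcal{D}^2 g(z)|\lesssim |z|^{q-2}$ on $\{|z|\leq 1\}$; the exponential factor is harmless because $e^{\varepsilon|z|^2}\geq 1$. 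For $|z|\geq 1$, the subcritical condition \eqref{sc} yields $|G'''(r)|=o(e^{\alpha r})$ for every $\alpha>0$, and integrating twice from $0$ propagates the same decay to $G''$ and $G'$. Substituting $r=|z|^2$ and using $|z|^{q-1},|z|^{q-2}\geq 1$ to absorb the polynomial factor into the constant (after choosing $\alpha<\varepsilon$) closes the pointwise bounds.

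Once these are available, the mean value theorem produces $|g(z_1)-g(z_2)|\leq|z_1-z_2|\sup_{z\in[z_1,z_2]}|\mathcal{D}g(z)|$ and an analogous inequality for the $\mathcal{D}g$ difference controlled by $\mathcal{D}^2 g$. On the segment, $|z|\leq|z_1|+|z_2|\leq 2\max(|z_1|,|z_2|)$, so assuming without loss of generality $|z_1|\leq|z_2|$, the pointwise estimate dominates the supremum by $C|z_2|^{q-1}e^{4\varepsilon|z_2|^2}$ (respectively $C|z_2|^{q-2}e^{4\varepsilon|z_2|^2}$). Since $\varepsilon$ is arbitrary, renaming $4\varepsilon$ as $\varepsilon$ and bounding the single term by the symmetric sum $\sum_{i=1}^{2}|z_i|^{q-1}e^{\varepsilon|z_i|^2}$ gives the claim. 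The only delicate point is the pointwise interpolation between the vanishing of $g$ at the origin (which supplies the prefactor $|z|^{q-1}$ or $|z|^{q-2}$) and the subcritical growth at infinity (which supplies the exponential); splitting $\{|z|\leq 1\}$ from $\{|z|\geq 1\}$ and slightly tuning the growth parameter in \eqref{sc} reconciles the two.
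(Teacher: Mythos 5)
Your argument is correct and follows exactly the route the paper intends: the paper states this lemma with only the remark that it follows from \eqref{sc}, the mean value theorem, and the convexity of the exponential, and your write-up supplies precisely those steps (pointwise bounds on $\mathcal{D}g$ and $\mathcal{D}^2g$ obtained by splitting $\{|z|\leq 1\}$, where the vanishing $g\simeq r^{q}$ gives the power prefactor, from $\{|z|\geq 1\}$, where \eqref{sc} integrated twice controls $G'$, $G''$, $G'''$, followed by the mean value theorem on the segment). The only implicit point, shared with the paper itself, is that the hypothesis ``$g\simeq r^{q}$ near zero'' is tacitly assumed to differentiate, so that $G''$ and $G'''$ inherit the corresponding vanishing rates.
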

The proof of Theorem \ref{lwp'} contains three steps. First we prove existence of a local solution, second we show uniqueness and third we obtain global well-posedness. In the two next subsections, we assume that $\epsilon=1$. Indeed, the sign of $\epsilon$ has no local effect.
\subsection{Local well-posedness}
We use a standard fixed point argument. For $T>0$, denote the space
$$X_T:=\{u\in C_T(\Sigma)\quad\mbox{s. t}\quad u,\nabla u,xu\in L^4_T(L^4)\}$$
endowed with the complete norm
$$\|u\|_T:=\|u\|_{L^{\infty}_T(H^1)}+\|u\|_{L^4_T(W^{1,4})}+\|xu\|_{L^{\infty}_T(L^2)}+\|xu\|_{L^4_T(L^4)}.$$
Let the map $$\phi:\quad v\longmapsto -i\int_0^t U(t-s)[|x|^\mu g(v+w)(s)]ds,$$
where $w:=U(t)u_0$ is the solution to the associated free problem to \eqref{eq1}, namely, for $V:=|x|^2$,
$$i\dot w+\Delta w=|x|^2w\quad w(0,.)=u_0.$$
We shall prove that $\phi$ is a contraction on the closed unit ball of $X_T$ for some positive time $T>0$. Using Strichartz estimate in Proposition \ref{str} with the fact that
$\nabla \phi(v)=-i\int_0^t U(t-s)[\nabla(|x|^\mu g(v+w)(s))ds+\nabla V\phi(v)]ds$ and
$x\phi(v)=-i\int_0^t U(t-s)[x |x|^\mu g(v+w)(s)ds+2\nabla\phi(v)]ds$, we have
$$\|\phi(v)\|_{L^{\infty}_T(L^2)\cap L^4_T(L^4)}\lesssim \||x|^\mu g(v+w)\|_{L^1_T(L^2)},$$
\begin{eqnarray*}
\|\nabla(\phi(v))\|_{L^{\infty}_T(L^2)\cap L^4_T(L^4)}
&\lesssim& \|\nabla(|x|^\mu g(v+w))\|_{L^1_T(L^2)}+\|\phi(v)\nabla V\|_{L^1_T(L^2)}\\
&\lesssim& \|\nabla(|x|^\mu g(v+w))\|_{L^1_T(L^2)}+T\|x\phi(v)\|_{L^{\infty}_T(L^2)},
\end{eqnarray*}
\begin{eqnarray*}
\|x\phi(v)\|_{L^{\infty}_T(L^2)\cap L^4_T(L^4)}
&\lesssim& \|x|x|^\mu g(v+w)\|_{L^1_T(L^2)}+T\|\nabla (\phi(v))\|_{L^{\infty}_T(L^2)}.
\end{eqnarray*}
Thus
\begin{equation}
\|\phi(v)\|_{T}\lesssim \||x|^\mu g(v+w)\|_{L^1_T(\Sigma)}+T\Big(\|\nabla (\phi(v))\|_{L^{\infty}_T(L^2)}+\|x\phi(v)\|_{L^{\infty}_T(L^2)}\Big).
\end{equation}
Now, let $v\in B_T(1)$ the closed unit ball of $X_T$. By \eqref{sc}, for any $\varepsilon>0$ there exists $C_\varepsilon>0$ such that
\begin{eqnarray*}
\|x|x|^\mu g(v+w)\|
&\leq&C_\varepsilon\||x|^{1+\mu}|v+w|^qe^{\varepsilon|v+w|^2}\|\\
&\lesssim&\||x|^{1+\mu}|v+w|^q(e^{\varepsilon|v+w|^2}-1)\|+\||x|^{1+\mu}(v+w)^q\|\\
&\lesssim&\||x|(v+w)\|_4[\||x|^{\mu}(v+w)^{q-1}\|_8\|e^{\varepsilon|v+w|^2}-1\|_8+\||x|^{\mu}|v+w|^{q-1}\|_4].
\end{eqnarray*}
On the other hand, by the conservation of the mass and the energy of $w$, 
$$\|v+w\|_{H^1}^2\leq 2(\|v\|_{H^1}^2+\|w\|_{H^1}^2)\leq2(1+ \|u_0\|_{\Sigma}^2).$$ Take
\begin{equation}\label{eps}
\varepsilon_0:=\frac\pi{4(1+\|u_0\|_\Sigma)^2}.
\end{equation}
Using Moser-Trudinger inequality, we have
\begin{eqnarray*}
\|e^{\varepsilon_0|v+w|^2}-1\|_{L^8}^8
&\lesssim&\int\Big(e^{8\varepsilon_0\|v+w\|_{H^1}^2(\frac{|v+w|}{\|v+w\|_{H^1}})^2}-1\Big)dx\\
&\lesssim& \|v+w\|^2\lesssim \Big(1+\|u_0\|\Big)^2.
\end{eqnarray*}
By the interpolation inequality in proposition \eqref{prp2''}, since $4(q-1)>2+8\mu$ and $8(q-1)>2+16\mu$,
$$\||x|^{\mu}(v+w)^{q-1}\|_4+\||x|^{\mu}(v+w)^{q-1}\|_8\lesssim\|v+w\|_T^{q-1}.$$
Thus
\begin{eqnarray*}
\|x|x|^\mu g(v+w)\|_{L^1_T(L^2)}
&\lesssim&\|x(v+w)\|_{L_T^1(L^4)}[(1+\|u_0\|)^{\frac14}+1]\|v+w\|_T^{q-1}T^\frac34\\
&\lesssim&T^\frac34(1+\|u_0\|)^{\frac14}\|v+w\|_{T}^q\\
&\lesssim&T^\frac34(1+\|u_0\|)^{\frac{1}{4}}(1+\|u_0\|_\Sigma)^q\\
&\lesssim&(1+\|u_0\|_\Sigma)^{q+\frac14}T^\frac34.
\end{eqnarray*}
It remains to control 
$$\||x|^\mu g(v+w)\|_{L^1_T(\dot H^1)}\lesssim\||x|^\mu\nabla[ g(v+w)]\|_{L_T^1(L^2)}+\||x|^{\mu-1} g(v+w)\|_{L_T^1(L^2)}.$$ 
By Lemma \ref{g'}, since $q>2+2\mu$, arguing as previously
\begin{eqnarray*}
\||x|^{\mu-1} g(v+w)]\|
&\lesssim&\||x|^{\mu-1}|v+w|^{q}(e^{\varepsilon_0|v+w|^2}-1)\|+\||x|^{\mu-1}|v+w|^{q}\|\\
&\lesssim&\||x|^{\mu-1}|v+w|^{q}\|_4\|e^{4\varepsilon_0|v+w|^2}-1\|_1^\frac14+\||x|^{\mu-1}|v+w|^{q}\|\\
&\lesssim&\|v+w\|_T^{q}[1+(1+\|u_0\|)^\frac12].
\end{eqnarray*}
Moreover, also by Lemma \ref{g'}, via H\"older inequality
\begin{eqnarray*}
\||x|^\mu\nabla[ g(v+w)]\|
&\lesssim&\||x|^\mu\nabla(v+w)|v+w|^{q-1}e^{\frac{\varepsilon_0}2|v+w|^2}\|\\
&\lesssim&\|x\nabla(v+w)\|_4\||x|^{\mu-1}|v+w|^{q-1}e^{\frac{\varepsilon_0}2|v+w|^2}\|_4\\
&\lesssim&\|x\nabla(v+w)\|_4[\||x|^{\mu-1}|v+w|^{q-1}(e^{\frac{\varepsilon_0}2|v+w|^2}-1)\|_4+\||x|^{\mu-1}|v+w|^{q-1}\|_4]\\
&\lesssim&\|x\nabla(v+w)\|_4[\||x|^{\mu-1}|v+w|^{q-1}\|_8\|e^{4\varepsilon_0|v+w|^2}-1\|_1^{\frac18}+\||x|^{\mu-1}|v+w|^{q-1}\|_4]\\
&\lesssim&\|x\nabla(v+w)\|_4\|v+w\|_T^{q-1}[1+(1+\|u_0\|)^\frac14].
\end{eqnarray*}
This implies that
$$\||x|^\mu g(v+w)\|_{L^1_T(\dot H^1)}\lesssim T\|v+w\|_T^{q}(1+\|u_0\|)^\frac12+ T^\frac34\|v+w\|_T^q(1+\|u_0\|)^\frac14.$$
Therefore, for $0<T<1$ small enough,
\begin{eqnarray*}
\|\phi(v)\|_{T}
&\lesssim &\|g(v+w)\|_{L^1_T(\Sigma)}+T\|\phi(v)\|_{T}\\
&\lesssim&\Big(1+\|u_0\|_{\Sigma}\Big)^{q+\frac1{2}}T^{\frac{3}{4}}+T\|\phi(v)\|_{T}\\
&\lesssim&\Big(1+\|u_0\|_{\Sigma}\Big)^{q+\frac1{2}}\frac{T^{\frac{3}{4}}}{1-T}.
\end{eqnarray*}
Thus, for $T>0$ small enough, $\phi$ maps $B_T(1)$ into itself. It remains to prove that $\phi$ is a contraction. Let $v_1,v_2\in B_T(1)$ solutions to \eqref{eq1}. Then
$$\phi(v_1)-\phi(v_2)=-i\int_0^t U(t-s)\Big(|x|^\mu[g(v_1+w)-g(v_2+w)(s)]\Big)ds.$$
Using Strichartz estimate in Proposition \ref{str} and arguing as previously, we have{\small
\begin{gather*}
\|\phi(v_1)-\phi(v_2)\|_{L^{\infty}_T(L^2)\cap L^4_T(L^4)}\lesssim \||x|^\mu[g(v_1+w)-g(v_2+w)]\|_{L^1_T(L^2)},\\
\|\nabla(\phi(v_1)-\phi(v_2))\|_{L^{\infty}_T(L^2)\cap L^4_T(L^4)}
\lesssim \|\nabla(|x|^\mu[g(v_1+w)-g(v_2+w)])\|_{L^1_T(L^2)}+T\|x(\phi(v_1)-\phi(v_2))\|_{L^{\infty}_T(L^2)},\\
\|x(\phi(v_1)-\phi(v_2))\|_{L^{\infty}_T(L^2)\cap L^4_T(L^4)}
\lesssim \|x|x|^\mu(g(v_1+w)-g(v_2+w))\|_{L^1_T(L^2)}+T\|\nabla (\phi(v_1)-\phi(v_2))\|_{L^{\infty}_T(L^2)}.
\end{gather*}}
Thus, for small $T>0$,
\begin{eqnarray*}
\|\phi(v_1)-\phi(v_2)\|_{T}
&\lesssim& \|g(v_1+w)-g(v_2+w)\|_{L^1_T(\Sigma)}+T\|\phi(v_1)-\phi(v_2)\|_{L^{\infty}_T(\Sigma)}\\
&\lesssim&\frac{1}{1-T}\|g(v_1+w)-g(v_2+w)\|_{L^1_T(\Sigma)}.
\end{eqnarray*}
Denoting $u_i=v_i+w$ and $u:=v_1-v_2$, using Moser-Trudinger inequality via Lemma \ref{g'}, since $q-1>2+2(\mu-1)$,
\begin{eqnarray*}
\|x|x|^\mu(g(u_1)-g(u_2))\|
&\lesssim&\sum_{i=1}^2\||x|^{\mu+1}u|u_i|^{q-1}(e^{\varepsilon_0|u_i|^2}-1)\|+\||x|^{\mu+1}u|u_i|^{q-1}\|\\
&\leq&\|u\|_4[\||x|^{\mu+1}|u_1|^{q-1}\|_{L^8}\|e^{8\varepsilon_0|u_1|^2}-1\|_1^{\frac18}+\||x|^{\mu+1}|u_i|^{q-1}\|_4]\\
&\lesssim&\|u\|_T\|u_1\|_T^{q-1}(1+\|u_0\|)^{\frac14}\\
&\lesssim&\|u\|_T(1+\|u_0\|_\Sigma)^{q-\frac34}.
\end{eqnarray*}
So
\begin{eqnarray*}
\|x|x|^\mu(g(u_1)-g(u_2))\|_{L^1_T(L^2)}
&\lesssim&(1+\|u_0\|_\Sigma)^{q-\frac34}\|u\|_{L^\infty_T(H^1)}T\\
&\lesssim&(1+\|u_0\|_\Sigma)^{q-\frac34}\|v_1-v_2\|_TT.
\end{eqnarray*}
Compute, fore $|x|^\mu(g(u_1)-g(u_2)):=h$,
{\small \begin{eqnarray*}
\|\nabla h\|_{L^1_T(L^2)}
&=&\| |x|^\mu(\mathcal Dg(u_1)-\mathcal Dg(u_2))\nabla u_1+|x|^\mu\mathcal Dg(u_2)\nabla u+\mu|x|^{\mu-1}(g(u_1)-g(u_2))\|_{L^1_T(L^2)}\\
&\leq&\|\nabla u_1|x|^\mu(\mathcal Dg(u_1)-\mathcal Dg(u_2))\|_{L^1_T(L^2)}+\|\nabla u|x|^\mu\mathcal Dg(u_2)\|_{L^1_T(L^2)}+\mu\||x|^{\mu-1}(g(u_1)-g(u_2))\|_{L^1_T(L^2)}\\
&:=& (A)+(B)+(C).
\end{eqnarray*}}
By H\"older and Moser-Trudinger inequalities via Lemmas \ref{sblv} and \eqref{g'},
\begin{eqnarray*}
(B)
&\lesssim&\|\nabla u|x|^\mu u_2^{q-1}(e^{\frac{\varepsilon_0}8|u_2|^2}-1)\|_{L^1_T(L^2)}+\|\nabla u|x|^\mu u_2^{q-1}\|_{L^1_T(L^2)}\\
&\lesssim& T^\frac34\|\nabla u\|_{L_T^4(L^4)}\Big[\||x|^{\mu} u_2^{q-1}\|_{L_T^\infty(L^8)}\|e^{\frac{\varepsilon_0}8|u_2|^2}-1\|_{L^\infty_T(L^{8})}+\||x|^{\mu} u_2^{q-1}\|_{L_T^\infty(L^4)}\Big]\\
&\lesssim& T^\frac34(1+\|u_0\|)^\frac14\|u\|_{T}\|u_2\|_T^{q-1}.
\end{eqnarray*}
Let estimate $(C)$. Write using \eqref{g'} via Sobolev, H\"older and Moser-Trudinger inequalities with previous calculations,
\begin{eqnarray*}
(C)
&\lesssim&\|u|x|^{\mu-1}u_1^{q-1}(e^{\frac{\varepsilon_0}8|u_1|^2}-1)\|_{L^1_T(L^2)}+\|u|x|^{\mu-1}u_1^{q-1}\|_{L^1_T(L^2)}\\
&\lesssim&\|u|x|^{\mu-1}u_1^{q-1}\|_{L^1_T(L^4)}\|e^{\frac{\varepsilon_0}8|u_1|^2}-1\|_{L^\infty_T(L^4)}+\|u|x|^{\mu-1}u_1^{q-1}\|_{L^1_T(L^2)}\\
&\lesssim&T\|u\|_{L_T^\infty(H^1)}\||x|^{\mu-1}u_1^{q-1}\|_{L^\infty_T(L^8)}(1+\|u_0\|)^\frac12+T\|u\|_{L_T^\infty(H^1)}\||x|^{\mu-1}u_1^{q-1}\|_{L^1_\infty(L^4)}\\
&\lesssim&(1+\|u_0\|)^\frac12T\|u\|_T\|u_1\|_T^{q-1}.
\end{eqnarray*}
Let control $(A)$. Taking $p:=4+\frac2{q-2-2\mu}$ in Lemma \ref{g'}, yields $q>2\mu+2+\frac2p$ and via Lemma \ref{sblv},
\begin{eqnarray*}
(A)
&\lesssim&\sum_{i=1}^2\|\nabla u_1|x|^{\mu}uu_i^{q-2}e^{(\frac14-\frac1p){\varepsilon_0}|u_i|^2}\|_{L^1_T(L^2)}\\
&\lesssim&T^\frac34\|\nabla u_1\|_{L^4_T(L^4)}\||x|^{\mu}uu_1^{q-2}e^{(\frac14-\frac1p){\varepsilon_0}|u_1|^2}\|_{L^\infty_T(L^{4})}\\
&\lesssim&T^\frac34\|u_1\|_{T}\|u\|_{L_T^\infty(H^1)}\||x|^{\mu}u_1^{q-2}\|_{L_T^\infty(L^p)}[\|e^{(\frac14-\frac1p)\varepsilon_0|u_1|^2}-1\|_{L^\infty_T(L^{\frac{8p}{p-4}})}+1]\\
&\lesssim&T^\frac34\|u\|_T\|u_1\|_T^{q-1}[\|e^{2\varepsilon_0|u_1|^2}-1\|_{L^\infty_T(L^1)}^{\frac18-\frac1{2p}}+1]\\
&\lesssim&T^\frac34(1+\|u_0\|)^\frac14\|u_1\|_T^{q-1}\|u\|_T.
\end{eqnarray*}
Finally,
$$\|\phi(v_1)-\phi(v_2)\|_T\leq C[T^{\frac34}+T](1+\|u_0\|_\Sigma)^{q}\|v_1-v_2\|_T.$$
So $\phi$ is a contraction of $B_T(1)$ for some $T>0$ small enough. It's fix point $v$ satisfies $u=v+w$ is a solution to \eqref{eq1}. The existence is proved.
\subsection{Uniqueness in the conformal space}
We prove uniqueness of solution to \eqref{eq1} in the conformal space. Let $u_1,u_2\in C_T(\Sigma)$ solutions to \eqref{eq1} and $u:=u_1-u_2$. Compute
$$i\dot u+\Delta u=|x|^2u+|x|^\mu(g(u_1)-g(u_2)),\quad u(0,.)=0.$$
With a continuity argument we may assume that $0<T<1$ and
\begin{equation}\label{cnt}
\max_{i\in\{1,2\}}\|u_i\|_{L^\infty_T(\Sigma)}\leq 1+\|u_0\|_\Sigma.
\end{equation}
Uniqueness follows from previous computation with the result
\begin{lem}\mbox{}\\
\begin{enumerate}
\item
$\|xu\|_{L^{\infty}_T(L^2)\cap L^4_T(L^4)}\lesssim\frac{T}{1-(1+\|u_0\|_{\Sigma})^{q-\frac34}T^{\frac{3}{4}}}\|u\|_{L^{\infty}_T(H^1)}.$
\item
$\|\nabla u_1\|_{L^4_T(L^4)}\lesssim\frac{\|u_0\|+(1+\|u_0\|_{\Sigma})^{q+\frac14}T}{1-T^\frac34(1+\|u_0\|_\Sigma)^{q-\frac34}}.$
\item
$\|\nabla u\|_{L^4_T(L^4)}\lesssim\frac{T\|x u\|_{L^{\infty}_T(L^2)}+(1+\|u_0\|_\Sigma)^{q-\frac12}(1+\|\nabla u_1\|_{L_T^4(L^4)})T^\frac34\|u\|_{L_T^\infty(H^1)}}{1-T^\frac34(1+\|u_0\|_\Sigma)^{q-\frac12}}.$
\end{enumerate}
\end{lem}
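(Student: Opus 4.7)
The three estimates share a common architecture: apply the Strichartz inequality (Proposition \ref{str}) to a suitable Duhamel formulation, recycle the pointwise-in-time nonlinear bounds that were just produced in the local existence step, and absorb the recurring left-hand term via smallness of $T$.

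For (1), I would start from $i\dot u+\Delta u=Vu+|x|^\mu(g(u_1)-g(u_2))$ with $u(0)=0$, and use the commutator identity $[\Delta,x]=2\nabla$ to obtain $(i\partial_t+\Delta-V)(xu)=2\nabla u+x|x|^\mu(g(u_1)-g(u_2))$. Strichartz on the admissible pairs $(\infty,2)$ and $(4,4)$ then gives
$$\|xu\|_{L^\infty_T(L^2)\cap L^4_T(L^4)}\lesssim \|\nabla u\|_{L^1_T(L^2)}+\|x|x|^\mu(g(u_1)-g(u_2))\|_{L^1_T(L^2)}.$$
The first term is controlled by $T\|u\|_{L^\infty_T(H^1)}$, producing the numerator. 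For the nonlinear term I would repeat the calculation already done in the local existence proof, but using the key trick $|x|^{\mu+1}|u|=|x|^\mu|xu|$ to shift the weight $x$ onto the factor $u$; combined with Lemma \ref{g'}, Moser--Trudinger with the choice \eqref{eps}, and the radial Sobolev bound Proposition \ref{sblv}(3), this yields a bound by $T^{3/4}(1+\|u_0\|_\Sigma)^{q-3/4}\|xu\|_{L^4_T(L^4)}$. Moving this contribution to the left gives the denominator.

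For (2), Strichartz applied to $\nabla u_1=U(t)\nabla u_0-i\int_0^tU(t-s)[2xu_1+\nabla(|x|^\mu g(u_1))]\,ds$ yields
$$\|\nabla u_1\|_{L^4_T(L^4)}\lesssim \|\nabla u_0\|+T\|xu_1\|_{L^\infty_T(L^2)}+\|\nabla(|x|^\mu g(u_1))\|_{L^1_T(L^2)},$$
the first two of which are immediately bounded by $\|u_0\|_\Sigma+T(1+\|u_0\|_\Sigma)$. The last term is split as $|x|^\mu\mathcal Dg(u_1)\nabla u_1+\mu|x|^{\mu-2}xg(u_1)$: the ``gradient'' piece is handled exactly as the analogous term $(B)$ from the uniqueness computation and produces a factor $T^{3/4}(1+\|u_0\|_\Sigma)^{q-3/4}\|\nabla u_1\|_{L^4_T(L^4)}$ which is absorbed to give the denominator, while the boundary piece is estimated by radial Sobolev and Moser--Trudinger to contribute $T(1+\|u_0\|_\Sigma)^{q+1/4}$, furnishing the second numerator term.

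For (3), Strichartz applied to $\nabla u=-i\int_0^tU(t-s)[2xu+\nabla(|x|^\mu(g(u_1)-g(u_2)))]\,ds$ gives
$$\|\nabla u\|_{L^4_T(L^4)}\lesssim T\|xu\|_{L^\infty_T(L^2)}+(A)+(B)+(C),$$
with $(A),(B),(C)$ the three terms already decomposed in the preceding subsection. Term $(A)$ carries an exogenous $\|\nabla u_1\|_{L^4_T(L^4)}$ factor (with weight $T^{3/4}(1+\|u_0\|_\Sigma)^{q-1/2}\|u\|_{L^\infty_T(H^1)}$) which stays in the numerator, term $(B)$ carries $\|\nabla u\|_{L^4_T(L^4)}$ (with the same weight minus the $\|\nabla u_1\|_{L^4_T(L^4)}$ factor) which is absorbed to the left and yields the denominator, and term $(C)$ is of the same or lower order and is folded into the $(A)$ contribution. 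Collecting the remaining pieces produces (3).

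The only real difficulty is bookkeeping: one must track the various exponents of $T$ (coming from H\"older in time against the admissible pair $(4,4)$), of $(1+\|u_0\|_\Sigma)$ (coming from Moser--Trudinger and radial Sobolev in Proposition \ref{sblv}(3)), and of the power $q$, and identify, in each of the three estimates, which summand on the right reproduces the quantity being bounded so that it can be absorbed. Once this has been done carefully, the three inequalities follow mechanically from the material already assembled in the local existence step.
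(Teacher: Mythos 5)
Your proposal follows the paper's own argument essentially verbatim: Strichartz applied to the equations satisfied by $xu$, $\nabla u_1$ and $\nabla u$ (with the commutator terms $2\nabla u$, resp.\ $2xu$, producing the $T\|u\|_{L^\infty_T(H^1)}$ and $T\|xu\|_{L^\infty_T(L^2)}$ contributions), the nonlinear bounds recycled from the contraction step via Lemma \ref{g'}, Moser--Trudinger with $\varepsilon_0$ and the radial Sobolev estimate, and absorption of the self-referential summand for small $T$ to generate each denominator. The identification of which term is absorbed in each of the three estimates, and of the role of $\|\nabla u_1\|_{L^4_T(L^4)}$ in the numerator of the third, matches the paper exactly.
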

\begin{proof}
Using Moser-Trudinger inequality via Lemma \ref{g'}, we have
\begin{eqnarray*}
\|xu\|_{L^{\infty}_T(L^2)\cap L^4_T(L^4)}
&\lesssim& \|x|x|^\mu(g(u_1)-g(u_2))\|_{L^1_T(L^2)}+T\|\nabla u\|_{L^{\infty}_T(L^2)}\\
&\lesssim&\sum_i (\|xu|x|^\mu u_i^{q-1}(e^{\varepsilon_0|u_i|^2}-1)\|_{L^1_T(L^2)}+\|xu|x|^\mu u_i^{q-1}\|_{L_T^1(L^2)})+T\|u\|_{L^{\infty}_T(H^1)}\\
&\lesssim& T^{\frac{3}{4}}\|xu\|_{L^4_T(L^4)}\sum_i\|u_i\|_{L_T^\infty(H^1)}^{q-1}(1+\|e^{\varepsilon_0|u_i|^2}-1\|_{L^{\infty}_T(L^8)})+T\|u\|_{L^{\infty}_T(H^1)}\\
&\lesssim& \|xu\|_{L^4_T(L^4)}(1+\|u_0\|_{\Sigma})^{q-\frac34}T^{\frac{3}{4}}+T\|u\|_{L^{\infty}_T(H^1)}\\
&\lesssim&\frac{T}{1-(1+\|u_0\|_{\Sigma})^{q-\frac34}T^{\frac{3}{4}}}\|u\|_{L^{\infty}_T(H^1)}.
\end{eqnarray*}
Using Strichartz estimate, Moser-Trudinger inequality via Lemma \ref{g'} and arguing as previously
\begin{eqnarray*}
\|\nabla u_1\|_{L^4_T(L^4)}
&\lesssim&\|u_0\|+ \|\nabla[|x|^\mu g(u_1)]\|_{L^1_T(L^2)}+T\|x u_1\|_{L^{\infty}_T(L^2)}\\
&\lesssim&\|u_0\|+ \||x|^{\mu-1}g(u_1)\|_{L^1_T(L^2)}+\||x|^{\mu}\mathcal Dg(u_1)\nabla u_1\|_{L^1_T(L^2)}+T\|x u_1\|_{L^{\infty}_T(L^2)}\\
&\lesssim&\|u_0\|+ \||x|^{\mu-1}u_1^qe^{\varepsilon_0|u_1|^2}\|_{L^1_T(L^2)}+\||x|^{\mu}\nabla u_1u_1^{q-1}e^{\varepsilon_0|u_1|^2}\|_{L^1_T(L^2)}+T\|x u_1\|_{L^{\infty}_T(L^2)}\\
&\lesssim&\|u_0\|+(1+\|u_0\|_\Sigma)^{\frac14}[\|u_1\|_{L_T^\infty(H^1)}^qT+T^\frac34\|\nabla u_1\|_{L_T^4(L^4)}\|u_1\|_{L_T^\infty(H^1)}^{q-1}]+T(1+\|u_0\|_\Sigma)\\
&\lesssim& \frac{\|u_0\|+(1+\|u_0\|_{\Sigma})^{q+\frac14}T}{1-T^\frac34(1+\|u_0\|_\Sigma)^{q-\frac34}}.
\end{eqnarray*}
By Strichartz estimate, we have
\begin{eqnarray*}
\|\nabla u\|_{L^4_T(L^4)}
&\lesssim& \|\nabla[|x|^\mu(g(u_1)-g(u_2))]\|_{L^1_T(L^2)}+T\|x u\|_{L^{\infty}_T(L^2)}\\
&\lesssim& \||x|^{\mu-1}(g(u_1)-g(u_2))]\|_{L^1_T(L^2)}+\||x|^{\mu}\nabla(g(u_1)-g(u_2))\|_{L^1_T(L^2)}+T\|x u\|_{L^{\infty}_T(L^2)}.
\end{eqnarray*}
With previous computation, for $T\in(0,1)$ small enough
{\small\begin{eqnarray*}
\|\nabla u\|_{L^4_T(L^4)}
&\lesssim&(1+\|u_0\|_\Sigma)^\frac12T^\frac34[\|\nabla u\|_{L^4_T(L^4)}\|u_1\|_{L_T^\infty(H^1)}^{q-1}\\&+&\|u\|_{L_T^\infty(H^1)}\|u_1\|_{L_T^\infty(H^1)}^{q-2}(\|u_1\|_{L_T^\infty(H^1)}+\|\nabla u_1\|_{L_T^4(L^4)})]+T\|x u\|_{L^{\infty}_T(L^2)}\\
&\lesssim&(1+\|u_0\|_\Sigma)^{q-\frac12}T^\frac34[\|\nabla u\|_{L^4_T(L^4)}+\|u\|_{L_T^\infty(H^1)}(1+\|\nabla u_1\|_{L_T^4(L^4)})]+T\|x u\|_{L^{\infty}_T(L^2)}\\
&\lesssim&\frac{T\|x u\|_{L^{\infty}_T(L^2)}+(1+\|u_0\|_\Sigma)^{q-\frac12}(1+\|\nabla u_1\|_{L_T^4(L^4)})T^\frac34\|u\|_{L_T^\infty(H^1)}}{1-T^\frac34(1+\|u_0\|_\Sigma)^{q-\frac12}}.
\end{eqnarray*}}
\end{proof}
\subsection{Global well-posedness in the defocusing case}
This subsection is devoted to prove that the maximal solution of \eqref{eq1} is global in the defocusing case. Recall an important fact that is the time of local existence depends only on
the quantity $\|u_0\|_\Sigma$. Let $u$ to be the unique maximal
solution of \eqref{eq1} in the space $C_T(\Sigma)$ for any $0<T<{T^*}$ with initial data $u_0$,
where $0<T^*\leq +\infty$ is the lifespan of $u$. We shall prove that $u$ is global.
By contradiction, suppose that $T^*< +\infty$. Consider for $0<s<T^*$, the problem
$$(P_{s})\left\{\begin{array}{cccc}
i\dot v+\Delta{v}-|x|^2v&=&|x|^\mu g(v),\\
v(s,.)&=&u(s,.).
 \end{array}\right.$$
 Using the same arguments used in the local existence, we can find a real $\tau>0$ and a solution
$v$ to $(P_{s})$ on $[s,s+\tau]$. Taking in the section of local existence, instead of $\varepsilon_0$, the real number
 $$\varepsilon=\frac\pi{8(1+E(0)+M(0))},$$
we see that $\tau$ does not depend on $s$. Thus, if we let $s$ be
close to $T^*$ such that $s+\tau>T^*$, we can extend $v$ for times higher than $T^*$. This fact
contradicts the maximality of $T^*$. We obtain the result claimed.
\section{Well-posedness in the critical case}
This section is devoted to prove Theorem \ref{lwp} about existence of a unique solution to the nonlinear Schr\"odinger problem \eqref{eq1} in the critical case. So, we suppose in all this section that \eqref{cc} is satisfied. In this section we assume, for simplicity and without loss of generality that $\alpha_g=1$.
 Then using, \eqref{cc}, the
mean value theorem and the convexity of the exponential function, we derive the
following  property
\begin{lem}\label{g}
There exists $q>2+2\mu$ such that for any $\varepsilon >0$, there exists $C_\varepsilon>0$ satisfying
\begin{gather*}
|g(z_1)-g(z_2)|\leq C_{\varepsilon} |z_1-z_2|\sum_{i=1}^2|z_i|^{q-1}e^{ (1+\varepsilon)|z_i|^2},\quad\forall z_1,z_2\in\C,\\
|\mathcal Dg(z_1)-\mathcal Dg(z_2)|\leq C_{\varepsilon}|z_1-z_2|\sum_{i=1}^2|z_i|^{q-2}e^{ (1+\varepsilon)|z_i|^2},\quad\forall z_1,z_2\in\C.
\end{gather*}
\end{lem}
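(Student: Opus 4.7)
The plan is to mimic the standard mean-value argument used for Lemma \ref{g'}, adjusting the exponential weight to reflect the critical growth assumption \eqref{cc}. Identifying $g$ with a $C^2$ map $\R^2\to\R^2$ and setting $z_\theta:=(1-\theta)z_2+\theta z_1$, the fundamental theorem of calculus gives
\[
g(z_1)-g(z_2)=\int_0^1\mathcal D g(z_\theta)(z_1-z_2)\,d\theta,\qquad \mathcal D g(z_1)-\mathcal D g(z_2)=\int_0^1\mathcal D^2 g(z_\theta)\cdot(z_1-z_2)\,d\theta,
\]
so both statements reduce to a pointwise bound on $|\mathcal D g|$ and $|\mathcal D^2 g|$ along the segment $[z_2,z_1]$.

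Differentiating $g(u)=uG'(|u|^2)$ one gets $|\mathcal D g(z)|\lesssim |G'(|z|^2)|+|z|^2|G''(|z|^2)|$ and $|\mathcal D^2 g(z)|\lesssim |z||G''(|z|^2)|+|z|^3|G'''(|z|^2)|$. With $\alpha_g=1$, assumption \eqref{cc} says $|G'''(r)|\leq Ce^r$; integrating twice, and using that $g\simeq r^q$ near $0$ forces $G'(0)=0$, gives $|G'(r)|,\,|G''(r)|\lesssim e^r$ for every $r\geq 0$. Near the origin the same hypothesis yields the polynomial control $|G^{(j)}(s)|\lesssim s^{(q-1)/2-(j-1)}$, and splicing the two regimes produces, for every $\varepsilon'>0$,
\[
|\mathcal D g(z)|\leq C_{\varepsilon'}\,|z|^{q-1}e^{(1+\varepsilon')|z|^2},\qquad |\mathcal D^2 g(z)|\leq C_{\varepsilon'}\,|z|^{q-2}e^{(1+\varepsilon')|z|^2}.
\]
The $\varepsilon'$-loss is exactly what is needed to swallow the polynomial prefactor $1+|z|^2$ coming from the large-$|z|$ regime.

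To conclude, one observes that $|z_\theta|\leq (1-\theta)|z_2|+\theta|z_1|\leq \max(|z_1|,|z_2|)$, and that the map $r\mapsto r^k e^{(1+\varepsilon')r^2}$ is increasing on $\R_+$ for every $k\geq 0$; hence
\[
|z_\theta|^k e^{(1+\varepsilon')|z_\theta|^2}\leq \sum_{i=1}^{2}|z_i|^k e^{(1+\varepsilon')|z_i|^2}.
\]
Applying this with $k=q-1$, respectively $k=q-2$ (which is positive since $q>2+2\mu>2$), converts the segment integral into the desired sum over the endpoints, and relabelling $\varepsilon'$ as $\varepsilon$ produces both of the claimed inequalities.

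The main technical obstacle is matching the polynomial vanishing of the derivatives $G^{(j)}$ at the origin with their exponential growth at infinity: the former produces the weight $|z|^{q-1}$ (respectively $|z|^{q-2}$) while the latter forces the weight $e^{|z|^2}$. This reconciliation is carried out by the same device used in the subcritical case, namely absorbing the unavoidable polynomial loss into the $\varepsilon$-inflation of the exponent $(1+\varepsilon)$; there is no genuinely new difficulty beyond what was already encountered in Lemma \ref{g'}.
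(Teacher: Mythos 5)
Your proposal is correct and follows essentially the same route the paper indicates for Lemma \ref{g}: the mean value theorem along the segment $[z_2,z_1]$, pointwise bounds on $\mathcal D g$ and $\mathcal D^2 g$ obtained by integrating the growth hypothesis \eqref{cc} on $G'''$ and matching it with the vanishing $g\simeq r^q$ at the origin, and then passing from $z_\theta$ to the endpoints via monotonicity of $r\mapsto r^k e^{(1+\varepsilon)r^2}$ (the paper phrases this last step as convexity of the exponential, which is an equivalent device). The absorption of the polynomial prefactors into the $\varepsilon$-inflated exponent is exactly the intended mechanism, so no gap remains.
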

The next auxiliary result will be useful.
\begin{lem}\label{exp}
Let $u\in C_{T}(H^1_{rd})\cap L_T^4(W^{1,4})$ a solution to \eqref{eq1} satisfying $\|\nabla u\|_{L^\infty_T(L^2)}^2<4\pi$, then there exists two real number $\alpha<4$ near four and $\varepsilon>0$ near zero such that for any H\"older couple $(p,p')$,
$$\|e^{(1+\varepsilon)|u|^2}-1\|_{L^{p'}_T(L^p)}\lesssim T^{1-\frac1p}+\|u\|_{L_T^4(W^{1,4})}^\alpha T^{(1-\frac1p)(1-\frac\alpha4)}.$$
\end{lem}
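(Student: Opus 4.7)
The plan is to combine the Moser--Trudinger inequality (Proposition~\ref{prop3}), the logarithmic Sobolev inequality (Proposition~\ref{log}) and the Sobolev embedding $W^{1,4}(\R^2)\hookrightarrow C^{1/2}(\R^2)$, fine-tuning three parameters so as to exploit the strict gap $\|\nabla u\|_{L^\infty_T(L^2)}^2<4\pi$ twice. Writing $4\pi-\eta:=\|\nabla u\|_{L^\infty_T(L^2)}^2$ with $\eta>0$, I first fix $\omega\in(0,1]$ small, $\lambda>\tfrac1\pi$ slightly above $\tfrac1\pi$, and $\varepsilon>0$ small enough that $(1+\varepsilon)\|u(t)\|_\omega^2<4\pi$ and $\alpha:=\sup_{t\in[0,T]}(1+\varepsilon)\lambda\|u(t)\|_\omega^2<4$, where $\|u\|_\omega^2:=\|\nabla u\|^2+\omega^2\|u\|^2$. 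The gap $\eta>0$ is what creates the simultaneous freedom for both conditions, and $\alpha$ can be made arbitrarily close to $4$.

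For the spatial $L^p$-bound I would use the pointwise inequality $(e^s-1)^p\leq e^{(p-1)s}(e^s-1)$ for $s\geq 0$, $p\geq 1$. Combined with Moser--Trudinger applied to $u/\|\nabla u\|$ (which delivers $\int(e^{(1+\varepsilon)|u|^2}-1)\,dx\lesssim\|u\|^2\lesssim 1$ by mass conservation and the choice of $\varepsilon$), this produces
\[
\|e^{(1+\varepsilon)|u(t)|^2}-1\|_{L^p}^p\lesssim e^{(p-1)(1+\varepsilon)\|u(t)\|_\infty^2}.
\]
Raising to the $p'/p$ power and using the identity $(p-1)p'/p=1$ collapses the $p$-dependence in the exponential and gives
\[
\|e^{(1+\varepsilon)|u(t)|^2}-1\|_{L^p}^{p'}\lesssim e^{(1+\varepsilon)\|u(t)\|_\infty^2}.
\]

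Next I would tame the $L^\infty$ norm by Proposition~\ref{log} with $\beta=\tfrac12$ and the continuous injection $W^{1,4}\hookrightarrow C^{1/2}$, giving
\[
\|u(t)\|_\infty^2\leq\lambda\|u(t)\|_\omega^2\log\bigl(C+C\|u(t)\|_{W^{1,4}}\bigr),
\]
so that, by the definition of $\alpha$,
\[
e^{(1+\varepsilon)\|u(t)\|_\infty^2}\leq C\bigl(1+\|u(t)\|_{W^{1,4}}\bigr)^\alpha.
\]
Integrating in time and applying H\"older with conjugates $\tfrac4\alpha,\tfrac4{4-\alpha}$ (licit precisely because $\alpha<4$) yields
\[
\int_0^T\bigl(1+\|u\|_{W^{1,4}}\bigr)^\alpha\,dt\lesssim T+T^{1-\alpha/4}\|u\|_{L^4_T(W^{1,4})}^\alpha,
\]
whence $\|e^{(1+\varepsilon)|u|^2}-1\|_{L^{p'}_T(L^p)}^{p'}\lesssim T+T^{1-\alpha/4}\|u\|_{L^4_T(W^{1,4})}^\alpha$. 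Extracting the $p'$-th root via the subadditivity of $r\mapsto r^{1/p'}$ on $\R_+$, and using $\tfrac1{p'}=1-\tfrac1p$ together with $\tfrac{1-\alpha/4}{p'}=(1-\tfrac1p)(1-\tfrac\alpha4)$, produces the claimed estimate.

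The main obstacle is exactly this coupled parameter selection: the strict inequality $\|\nabla u\|^2<4\pi$ is the single slack that must cover both the Moser--Trudinger threshold $(1+\varepsilon)\|\nabla u\|^2<4\pi$ and the H\"older-integrability threshold $(1+\varepsilon)\lambda\|u\|_\omega^2<4$; simultaneous satisfaction is only possible because the smallness is strict and because $\tfrac1\pi\cdot 4\pi=4$ falls exactly at the boundary, so the log estimate of Ibrahim et~al.\ can be calibrated to produce a sub-critical $\alpha$.
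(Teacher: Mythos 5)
Your argument is correct and follows essentially the same route as the paper: interpolate the spatial $L^p$ norm between $L^1$ (controlled by Moser--Trudinger after rescaling by $\|\nabla u\|$) and $L^\infty$ (your pointwise inequality $(e^s-1)^p\leq e^{(p-1)s}(e^s-1)$ is just this interpolation in disguise), then tame $e^{(1+\varepsilon)\|u\|_{L^\infty_x}^2}$ by the logarithmic estimate with $\beta=\tfrac12$ and $W^{1,4}\hookrightarrow C^{1/2}$, choosing $\omega,\varepsilon$ small and $\lambda$ near $\tfrac1\pi$ so that $\alpha<4$, and finish with H\"older in time with exponents $\tfrac4\alpha,\tfrac4{4-\alpha}$. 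The only cosmetic difference (also present in the paper) is that taking the $p'$-th root formally yields the power $\alpha/p'$ on $\|u\|_{L^4_T(W^{1,4})}$ rather than $\alpha$, a harmless discrepancy shared with the original proof.
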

\begin{proof}
By H\"older inequality, for any $\varepsilon>0$,
\begin{eqnarray*}
\|e^{(1+\varepsilon)|u|^2}-1\|_{L^{p'}_T(L^p)}
&\lesssim&\|e^{\frac1{p'}(1+\varepsilon)\|u\|_{L_x^\infty}^2}\|_{L^{p'}(0,T)}\|e^{(1+\varepsilon)|u|^2}-1\|_{L^{\infty}_T(L^1)}^\frac1p.
\end{eqnarray*}
 We can find $\varepsilon>0$ small such that $(1+\varepsilon)\|\nabla u\|^2<4\pi$. So, by Moser-Trudinger inequality,
\begin{eqnarray*}
\int\Big(e^{(1+\varepsilon)|u|^2}-1\Big)\,dx
&\leq&\int\Big(e^{(1+\varepsilon)\|\nabla u\|^2(\frac{|u|}{\|\nabla u\|})^2}-1\Big)\,dx\lesssim\|u\|^2\lesssim 1.
\end{eqnarray*}
For any $\lambda>\frac{1}{\pi}$ and $\omega\in ]0,1]$, by the Logarithmic inequality in Proposition \ref{log}, we have
$$e^{(1+\varepsilon)\|u\|^2_{L^{\infty}_x}}\leq\Big(C+2\sqrt{\frac{2}{\omega}}\frac{\|u\|_{C^{\frac{1}{2}}}}{\|u\|_{\omega}}\Big)^{\lambda(1+\varepsilon)\|u\|_{\omega}^2}.$$
Since $\|u\|_{\omega}^2=\omega^2\|u\|^2+\|\nabla u\|^2$, we may take $0<\omega,\varepsilon$ near zero and $\alpha<4$ near four such that $(1+\varepsilon)\|u\|_{\omega}^2<\alpha\pi<4\pi$. Thus, for $\lambda$ near $\frac1\pi$, we have
\begin{eqnarray*}
e^{(1+\varepsilon)\|u\|^2_{L^{\infty}_x}}
&\leq&\Big(C+2\sqrt{\frac{2}{\omega}}\frac{\|u\|_{C^{\frac{1}{2}}}}{\|u\|_{\omega}}\Big)^{\lambda(1+\varepsilon)\|u\|_{\omega}^2}\nonumber\\
&\lesssim&\Big(1+\|u\|_{C^{\frac{1}{2}}}\Big)^\alpha\lesssim1+\|u\|_{W^{1,4}}^\alpha.
\end{eqnarray*}
It follows that
\begin{eqnarray*}
\|e^{(1+\varepsilon)|u|^2}-1\|_{L^{p'}_T(L^p)}
&\lesssim&\|e^{\frac1{p'}(1+\varepsilon)\|u\|_{L_x^\infty}^2}\|_{L^{p'}(0,T)}\|e^{(1+\varepsilon)|u|^2}-1\|_{L^{\infty}_T(L^1)}^\frac1p\\
&\lesssim&\|e^{\frac1{p'}(1+\varepsilon)\|u\|_{L_x^\infty}^2}\|_{L^{p'}(0,T)}\\
&\lesssim&\|1+\|u\|_{W^{1,4}}^\alpha\|_{L^1(0,T)}^\frac1{p'}\\
&\lesssim&T^{1-\frac1p}+\|u\|_{L_T^4(W^{1,4})}^\alpha T^{(1-\frac1p)(1-\frac\alpha4)}.
\end{eqnarray*}
\end{proof}
The proof of Theorem \ref{lwp} contains three steps. First we prove existence of a local solution, second we show uniqueness and third we obtain global well-posedness. In the two next subsections, we assume that $\epsilon=1$. Indeed, the sign of $\epsilon$ has no local effect.
\subsection{Local well-posedness}
We use a standard fixed point argument. For $T>0$, we keep notations of the previous section.
We shall prove the existence a small $T>0$ such that $\phi$ is a contraction on some closed ball of $X_T$. Let $v\in B_T(r)$ the closed ball of $X_T$ centered on zero and with radius $r>0$. Using Strichartz estimate, we have, for $T\in(0,1)$,
$$\|\phi(v)\|_{T}\lesssim \||x|^\mu g(v+w)\|_{L^1_T(\Sigma)}.$$
 Taking account of Lemma \ref{g} via H\"older inequality and the estimate on $\R_+$,  $r^qe^{(1+\varepsilon)r^2}\leq C_{q,\varepsilon}(r^q+r^{q+2}e^{(1+\varepsilon)r^2})$, for any $\varepsilon>0$,
\begin{eqnarray*}
\|x|x|^\mu g(v+w)\|_{L^1_T(L^2)}
&\lesssim&\||x|^{\mu+1} (v+w)^{1+q}\|_{L^{4}_T(L^4)}\|e^{(1+\varepsilon)|v+w|^2}-1\|_{L^{\frac43}_T(L^4)}+\||x|^{\mu+1}(v+w)^q\|_{L^1_T(L^2)}.
\end{eqnarray*}
Moreover, since $8q>2+8(1+\mu)$,
$$\||x|^{\mu+1} (v+w)^{1+q}\|_4\lesssim \|v+w\|_{H^1}^{1+q}.$$
We have also
$$\||x|^{\mu+1} (v+w)^q\|\leq \|x(v+w)\|_4\||x|^{\mu} (v+w)^{q-1}\|_4\lesssim \|x(v+w)\|_4\|v+w\|_{H^1}^{q-1}.$$
Thus
\begin{eqnarray*}
\|x|x|^\mu g(v+w)\|_{L^1_T(L^2)}
&\lesssim&T^\frac14\|v+w\|_{L_T^\infty(H^1)}^{q+1}\|e^{(1+\varepsilon)|v+w|^2}-1\|_{L^{\frac43}_T(L^4)}\\
&+&T^\frac34\|x(v+w)\|_{L^4_T(L^4)}\|v+w\|_{L_T^\infty(H^1)}^{q-1}.
\end{eqnarray*}
Since, with a continuity argument, for small positive time $\|\nabla (v+w)\|\leq r+\|\nabla w\|\leq 2r+\|\nabla u_0\|$. We can find $r,\varepsilon>0$ small such that $(1+\varepsilon)\|\nabla (v+w)\|^2<4\pi$. So, by Lemma \ref{exp}, it follows that
\begin{eqnarray*}
\|x|x|^\mu g(v+w)\|_{L^1_T(L^2)}
&\lesssim&T^\frac12\|v+w\|_T^{q}[T^{\frac14}+\|v+w\|_{L_T^4(W^{1,4})}^\alpha T^{\frac34(1-\frac\alpha4)}].
\end{eqnarray*}
It remains to control $\||x|^\mu g(v+w)\|_{L^1_T(\dot H^1)}$. By Lemma \ref{g}, for any $\varepsilon>0,$
\begin{eqnarray*}
\|\nabla [|x|^\mu g(v+w)]\|_{L^1_T(L^2)}
&\lesssim&\||x|^{\mu-1}|v+w|^qe^{(1+\varepsilon)|v+w|^2}\|_{L^1_T(L^2)}\\
&+&\|\nabla(v+w)|x|^\mu(v+w)^{q-1}e^{(1+\varepsilon)|v+w|^2}\|_{L^1_T(L^2)}\\
&=&(I)+(II).
\end{eqnarray*}
Taking $\mu-1$ rather that $\mu$ in previous computations, yields for some real number near four $\alpha\in (0,4)$,
$$(I)\lesssim T^\frac1{4}\|v+w\|_T^{q}[T^{\frac34}+\|v+w\|_{L_T^4(W^{1,4})}^\alpha T^{\frac34(1-\frac\alpha4)}].$$
By H\"older inequality, for any $\varepsilon>0$,
\begin{eqnarray*}
(II)
&\lesssim&\|\nabla(v+w)|x|^{\mu}|v+w|^{q-1}e^{(1+\varepsilon)|v+w|^2}\|_{L^1_T(L^2)}\\
&\lesssim&\|\nabla(v+w)\|_{L^4_T(L^4)}\Big[\||x|^{\mu}|v+w|^{q-1}(e^{(1+\varepsilon)|v+w|^2}-1)\|_{L^\frac43_T(L^4)}+\||x|^{\mu}|v+w|^{q-1}\|_{L^\frac43_T(L^4)}\Big]\\
&\lesssim&\|v+w\|_T\Big[\||x|^{\mu}|v+w|^{q-1}\|_{L_T^\infty(L^{\frac{4(4+\varepsilon)}\varepsilon})}\|e^{(1+\varepsilon)|v+w|^2}-1\|_{L^\frac43_T(L^{4+\varepsilon})}+T^\frac34\|v+w\|_T^{q-1}\Big].
\end{eqnarray*}
Arguing as in the proof of Lemma \ref{exp}, it is sufficient to estimate $\||x|^{\mu}(v+w)^{q-1}\|_{L_T^\infty(L^{\frac{4(4+\varepsilon)}\varepsilon})}$. Since $q>1+2\mu+\frac{2\varepsilon}{4(4+\varepsilon)}$, we have via Lemma \ref{sblv}, $$\||x|^{\mu}(v+w)^{q-1}\|_{L_T^\infty(L^{\frac{4(4+\varepsilon)}\varepsilon})}\lesssim \|v+w\|_{L_T^\infty(H^1)}^{q-1}.$$
Thus,
$$(II)\lesssim [T^{\frac34}+\|v+w\|_{L_T^4(W^{1,4})}^\alpha T^{\frac34(1-\frac\alpha4)}]\|v+w\|_T^{q}.$$
 Now, $\|v+w\|_T\leq r+\|w_0\|_\Sigma=r+\|u_0\|_\Sigma$. Therefore, for $0<T$ small enough,
$$\|\phi(v)\|_{T}
\lesssim [T^{\frac34}+(r+\|u_0\|_{H^1})^\alpha T^{\frac34(1-\frac\alpha4)}](r+\|u_0\|_\Sigma)^{q}.$$
Thus, for $r,T>0$ small enough, $\phi$ maps $B_T(r)$ into itself. It remains to prove that $\phi$ is a contraction. Let $v_1,v_2\in B_T(r)$ solutions to \eqref{eq1}, $u:=v_1-v_2$ and $u_i:=v_i+w$, $i\in\{1,2\}$.
Using Strichartz estimate, for $T\in(0,1)$,
$$\|\phi(v_1)-\phi(v_2)\|_{T}\lesssim \||x|^\mu(g(v_1+w)-g(v_2+w))\|_{L^1_T(\Sigma)}.$$
Using Lemma \ref{g} via H\"older inequality, and the identity $r^{q-1}e^{(1+\varepsilon)r^2}\leq C_\varepsilon(r^{q-1}+r^{q+2}e^{(1+\varepsilon)r^2})$, yields for all $\varepsilon>0$,
\begin{eqnarray*}
\|x|x|^\mu(g(u_1)-g(u_2))\|_{L^1_T(L^2)}
&\lesssim&\sum_{i=1}^2\||x|^{1+\mu} uu_i^{q-1}e^{(1+\varepsilon)|u_i|^2}\|_{L^1_T(L^2)}\\
&\lesssim&\||x|^{1+\mu}u_1^{q+2}u\|_{L^4_T(L^4)}\|e^{(1+\varepsilon)|u_1|^2}-1\|_{L^{\frac43}_T(L^4)}+\||x|^{1+\mu} u_1^{q+2}u\|_{L^1_T(L^2)}\\&+&\||x|^{1+\mu} u_1^{q-1}u\|_{L^1_T(L^2)}.
\end{eqnarray*}
Now, since $4(q+2)>2+4(\mu+1)$, we have 
$$\||x|^{1+\mu} u_1^{q+2}u\|_{L^1_T(L^2)}\lesssim T\|u\|_{L_T^\infty(H^1)}\|u_1\|_{L_T^\infty(H^1)}^{q+2}.$$
$4(q-1)>2+8\mu$ implies that
$$\||x|^{1+\mu} u_1^{q-1}u\|_{L^1_T(L^2)}\leq \|xu\|_{L_T^4(L^4)}\||x|^\mu u_1^{q-1}\|_{L_T^\frac43(L^4)}\lesssim T^\frac34\|xu\|_{L_T^4(L^4)}\|u_1\|_{L_T^\infty(H^1)}^{q-1},$$
$6(q+2)>2+12(1+\mu)$, yields
$$\||x|^{1+\mu} u_1^{q+2}u\|_{L^4_T(L^4)}\leq \|u\|_{L_T^\infty(H^1)}\||x|^{1+\mu} u_1^{q+2}\|_{L_T^4(L^6)}\lesssim T^\frac14\|u\|_{L_T^\infty(H^1)}\|u_1\|_{L_T^\infty(H^1)}^{q+2}.$$
Thus, with Lemma \ref{exp},
\begin{eqnarray*}
\|x|x|^\mu(g(u_1)-g(u_2))\|_{L^1_T(L^2)}
&\lesssim&T^\frac14\|u\|_T\|u_1\|_{L_T^\infty(H^1)}^{q+2}\Big[\|e^{(1+\varepsilon)|u_1|^2}-1\|_{L^{\frac43}_T(L^4)}+1\Big]\\
&\lesssim& T^\frac14(1+\|u_0\|_\Sigma)^{q+2}[1+(1+\|u_0\|_\Sigma)^\alpha T^{\frac34(1-\frac\alpha4)}]\|u\|_T.
\end{eqnarray*}
Compute, fore $|x|^\mu(g(u_1)-g(u_2)):=h$ and $u:=u_1-u_2$,
{\small \begin{eqnarray*}
\|\nabla h\|_{L^1_T(L^2)}
&=&\||x|^\mu(\mathcal Dg(u_1)-\mathcal Dg(u_2))\nabla u_1 +|x|^\mu\mathcal Dg(u_2)\nabla u+\mu|x|^{\mu-1}(g(u_1)-g(u_2))\|_{L^1_T(L^2)}\\
&\leq&\||x|^\mu(\mathcal Dg(u_1)-\mathcal Dg(u_2))\nabla u_1 \|_{L^1_T(L^2)}+\||x|^\mu\mathcal Dg(u_2)\nabla u\|_{L^1_T(L^2)}+\mu\||x|^{\mu-1}(g(u_1)-g(u_2))\|_{L^1_T(L^2)}\\
&:=& (A)+(B)+(C).
\end{eqnarray*}}
By H\"older inequality via Lemma \eqref{exp}, for any $\varepsilon>0$,
\begin{eqnarray*}
(B)
&\lesssim&\|\nabla u|x|^\mu u_2^{q-1}e^{(1+\varepsilon)|u_2|^2}\|_{L^1_T(L^2)}\\
&\lesssim& \|\nabla u\|_{L_T^4(L^4)}\||x|^\mu u_2^{q-1}e^{(1+\varepsilon)|u_2|^2}\|_{L^\frac43_T(L^4)}\\
&\lesssim& \|\nabla u\|_{L_T^4(L^4)}\Big[\||x|^\mu u_2^{q-1}\|_{L_T^\infty(L^{\frac{4(4+\varepsilon)}\varepsilon})}\|e^{(1+\varepsilon)|u_2|^2}-1\|_{L^\frac43_T(L^{4+\varepsilon})}+T^\frac34\|u\|_T^{q-1}\Big].
\end{eqnarray*}
With previous computation, we have
$$(B)\lesssim (1+\|u_0\|_\Sigma)^{q-1}[T^{\frac34}+(1+\|u_0\|_\Sigma)^\alpha T^{\frac34(1-\frac\alpha4)}]\|u\|_T.$$
Let estimate $(C)$. Taking account of Lemma \ref{g} via Sobolev and H\"older inequalities, for any $\varepsilon>0$,
\begin{eqnarray*}
(C)
&\lesssim&\|u|x|^{\mu-1}u_1^{q-1}e^{(1+\varepsilon)|u_1|^2}\|_{L^1_T(L^2)}\\
&\lesssim&\|u|x|^{\mu-1}u_1^{q-1}\|_{L^4_T(L^4)}\|e^{(1+\varepsilon)|u_1|^2}-1\|_{L^{\frac43}_T(L^4)}+\|u|x|^{\mu-1}u_1^{q-1}\|_{L^1_T(L^2)}.
\end{eqnarray*}
By H\"older inequality, via Sobolev injection and the fact that $q>2+2\mu>1+\frac2{4+\varepsilon}+2\mu$,
$$\|u|x|^{\mu-1}u_1^{q-1}\|_{L^4_T(L^4)}\lesssim\|u\|_{L_T^\infty(H^1)}\||x|^{\mu-1}u_1^{q-1}\|_{L^4_T(L^{4+\varepsilon})}\lesssim\|u\|_T\|u_1\|_T^{q-1}T^\frac14.$$
With Lemma \ref{exp}, for some $\alpha\in (0,4)$,
\begin{eqnarray*}
(C)
&\lesssim& T^{\frac{1}4}[T^{\frac34}+\|u_1\|_{L_T^4(W^{1,4})}^\alpha T^{\frac34(1-\frac\alpha4)}]\|u_1\|_T^{q-1}\|u\|_T\\
&\lesssim& T^{\frac{1}4}[T^{\frac34}+(1+\|u_0\|_\Sigma)^\alpha T^{\frac34(1-\frac\alpha4)}](1+\|u_0\|_\Sigma)^{q-1}\|u\|_T.
\end{eqnarray*}
Let control $(A)$. By Lemmas \ref{g}-\ref{exp}, for some large real number $p>1$  such that $q>2+\frac2p+2\mu$ and any $\varepsilon>0$,
\begin{eqnarray*}
(A)
&\lesssim&\sum_{i=1}^2\|\nabla u_1|x|^{\mu}uu_i^{q-2}e^{(1+\varepsilon)|u_i|^2}\|_{L^1_T(L^2)}\\
&\lesssim&\|\nabla u_1\|_{L^4_T(L^4)}\Big[\||x|^{\mu}uu_1^{q-2}\|_{L_T^\infty(L^\frac{4(4+\varepsilon)}\varepsilon)}\|e^{(1+\varepsilon)|u_1|^2}-1\|_{L^\frac43_T(L^{4+\varepsilon})}+T^\frac34\||x|^{\mu}uu_1^{q-2}\|_{L_T^\infty(L^4)}\Big]\\
&\lesssim&\|u_1\|_{T}\|u\|_{L_T^\infty(H^1)}\||x|^{\mu}u_1^{q-2}\|_{L_T^\infty(L^p)}\Big[\|e^{(1+\varepsilon)|u_1|^2}-1\|_{L^\frac43_T(L^{4+\varepsilon})}+T^\frac34\Big]\\
&\lesssim&\|u\|_T\|u_1\|_T^{q-1}\Big[\|e^{(1+\varepsilon)|u_1|^2}-1\|_{L^\frac43_T(L^{4+\varepsilon})}+1\Big].
\end{eqnarray*}
Therefore
$$(A)\lesssim [T^{\frac34}+\|u_1\|_{L_T^4(W^{1,4})}^\alpha T^{\frac34(1-\frac\alpha4)}]\|u_1\|_T^{q-1}\|u\|_T\lesssim [T^{\frac34}+(r+\|u_0\|_\Sigma)^\alpha T^{\frac34(1-\frac\alpha4)}](r+\|u_0\|_\Sigma)^{q-1}\|u\|_T.$$
Finally, for some $\alpha<4$ near four,
$$\|\phi(v_1)-\phi(v_2)\|_T\leq C[T^{\frac34}+(r+\|u_0\|_\Sigma)^\alpha T^{\frac34(1-\frac\alpha4)}](r+\|u_0\|_{H^1})^{q-1}\|v_1-v_2\|_T.$$
So $\phi$ is a contraction of $B_T(r)$ for some $T,r>0$ small enough. It's fix point $v$ satisfies $u=v+w$ is a solution to \eqref{eq1}.
\subsection{Uniqueness in the conformal space}
 Let $u_1,u_2\in C_T(\Sigma)$ two solutions to \eqref{eq1} and $u:=u_1-u_2$. So
$$i\dot u+\Delta u-|x|^2u=|x|^\mu(g(u_1)-g(u_2)),\quad u(0,.)=0.$$
With a continuity argument, there exists $0<T<1$, such that
$$\max_{i\in\{1,2\}}\|\nabla u_i\|_{L^\infty_T(L^2)}^2<4\pi\quad\mbox{and}\quad\max_{i\in\{1,2\}}\|u_i\|_{L^\infty_T(\Sigma)}\leq 1+\|u_0\|_{\Sigma}.$$
Uniqueness follows from previous computation with the result
\begin{lem}\mbox{}\\
\begin{enumerate}
\item
$\|\nabla u_1\|_{L^4_T(L^4)}\lesssim(1+\|u_0\|_{\Sigma})^{q}.$
\item
$\|xu\|_{L^{\infty}_T(L^2)\cap L^4_T(L^4)}\lesssim\frac{(1+\|u_0\|_\Sigma)^{q+1+\alpha q}+T^\frac34}{1-(1+\|u_0\|_\Sigma)^{q-1}T^\frac34}T^\frac14\|u\|_{L_T^\infty(H^1)}.$
\item
$\|\nabla u\|_{L^4_T(L^4)}\lesssim
\frac{(1+\|u_0\|_\Sigma)^{q-2}\Big[(T^\frac14+\|\nabla u_1\|_{L_T^4(L^4)}(T^\frac34+(1+\|u_0\|_\Sigma)^{q\alpha}T^{\frac34(1-\frac\alpha4)}))\Big]}{1-(1+\|u_0\|_\Sigma)^{q-2}(T^\frac34+(1+\|u_0\|_\Sigma)^{q\alpha}T^{\frac34(1-\frac\alpha4)})}\|u\|_{L_T^\infty(H^1)}.$
\end{enumerate}
\end{lem}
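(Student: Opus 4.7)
All three estimates follow the same template: Strichartz duality (Proposition \ref{str}) combined with the pointwise nonlinearity bounds of Lemma \ref{g}, the exponential estimate of Lemma \ref{exp}, and the radial weighted Sobolev inequalities of Proposition \ref{sblv}. Each is closely parallel to a step of the contraction argument of the preceding subsection, so the strategy is to re-run those estimates while tracking the explicit dependence on $\|u_0\|_\Sigma$ and $T$, and to close the resulting inequalities by absorbing self-referential terms on the left. For (1), starting from the Duhamel formula for $u_1$ and applying Strichartz with admissible pair $(4,4)$ yields
\[ \|\nabla u_1\|_{L^4_T(L^4)} \lesssim \|\nabla u_0\| + \|\nabla(|x|^\mu g(u_1))\|_{L^1_T(L^2)} + T\|xu_1\|_{L^\infty_T(L^2)}. \]
Writing $\nabla(|x|^\mu g(u_1)) = |x|^\mu \mathcal D g(u_1)\nabla u_1 + \mu|x|^{\mu-1} g(u_1)$, using Lemma \ref{g} together with the pointwise trick $r^q e^{(1+\varepsilon)r^2}\leq C_\varepsilon(r^q+r^{q+2}e^{(1+\varepsilon)r^2})$, and invoking Lemma \ref{exp} (valid since $\|\nabla u_1\|_{L^\infty_T(L^2)}^2<4\pi$), one obtains on the right a copy of $\|\nabla u_1\|_{L^4_T(L^4)}$ multiplied by a small $T$-factor, which is absorbed on the left to yield the announced $(1+\|u_0\|_\Sigma)^q$ bound.

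For (2), applying Strichartz to the difference equation with $u(0,\cdot)=0$ gives
\[ \|xu\|_{L^\infty_T(L^2)\cap L^4_T(L^4)} \lesssim \|x|x|^\mu(g(u_1)-g(u_2))\|_{L^1_T(L^2)} + T\|\nabla u\|_{L^\infty_T(L^2)}. \]
Lemma \ref{g} bounds the integrand by $|x|^{\mu+1}|u|\sum_i|u_i|^{q-1}e^{(1+\varepsilon)|u_i|^2}$. A H\"older distribution placing $xu$ in $L^4_T(L^4)$ and the remaining factor in $L^{4/3}_T(L^4)$, combined with Lemma \ref{exp} and Proposition \ref{sblv}(3), produces a right-hand side of the form $(1+\|u_0\|_\Sigma)^{q-1}T^{3/4}\|xu\|_{L^4_T(L^4)} + C(\|u_0\|_\Sigma,T)T^{1/4}\|u\|_{L^\infty_T(H^1)}$; absorbing the first term on the left generates precisely the denominator $1-(1+\|u_0\|_\Sigma)^{q-1}T^{3/4}$.

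For (3), Strichartz yields
\[ \|\nabla u\|_{L^4_T(L^4)} \lesssim \|\nabla[|x|^\mu(g(u_1)-g(u_2))]\|_{L^1_T(L^2)} + T\|xu\|_{L^\infty_T(L^2)}. \]
Expanding the gradient as $(A)+(B)+(C)$ with $(A)=|x|^\mu(\mathcal D g(u_1)-\mathcal D g(u_2))\nabla u_1$, $(B)=|x|^\mu \mathcal D g(u_2)\nabla u$ and $(C)=\mu|x|^{\mu-1}(g(u_1)-g(u_2))$, the Lipschitz bounds of Lemma \ref{g}, Lemma \ref{exp}, and Proposition \ref{sblv}(3) deliver: term (C) is controlled by $(1+\|u_0\|_\Sigma)^{q-1}$ times $T$-powers; term (B) produces the self-referential $\|\nabla u\|_{L^4_T(L^4)}$ contribution that yields the denominator; and term (A) involves $\|\nabla u_1\|_{L^4_T(L^4)}$, which is controlled via item (1). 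Isolating $\|\nabla u\|_{L^4_T(L^4)}$ on the left finishes the estimate.

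The principal obstacle is the critical-case bookkeeping: every occurrence of $e^{(1+\varepsilon)|u_i|^2}$ triggers, through Lemma \ref{exp}, a factor of $\|u_i\|_{L^4_T(W^{1,4})}^\alpha$ for some $\alpha<4$ near $4$, so the three items must be proved in the stated order --- (1) must come first to furnish the a priori bound on $\|\nabla u_1\|_{L^4_T(L^4)}$ subsequently used in (3), while (2) is inserted into the ambient Strichartz control. A subsidiary difficulty is the need to verify at each H\"older step the arithmetic constraints of the form $4(q+2)>2+4(\mu+1)$, $6(q+2)>2+12(1+\mu)$, $8(q-1)>2+16\mu$, which guarantee applicability of Proposition \ref{sblv}(3); these are exactly those already used in the contraction proof, so they transfer without modification.
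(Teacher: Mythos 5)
Your overall template (Strichartz duality, Lemma \ref{g}, Lemma \ref{exp}, Proposition \ref{sblv}, then absorption) matches the paper for items (2) and (3), and your remark that (1) must be proved first so that its bound can be fed into the $\alpha$-powers appearing later is exactly how the paper organizes the argument. However, your treatment of item (1) has a genuine gap. After applying Lemma \ref{g} and Lemma \ref{exp} to $\nabla(|x|^\mu g(u_1))$, the self-referential term on the right is \emph{not} a single copy of $\|\nabla u_1\|_{L^4_T(L^4)}$ times a small $T$-factor: the gradient term contributes one factor of $\|u_1\|_{L_T^4(W^{1,4})}$, and the exponential weight contributes, through Lemma \ref{exp}, an additional factor $\|u_1\|_{L_T^4(W^{1,4})}^\alpha$ with $\alpha<4$ near $4$. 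The resulting inequality has the shape
$$X\;\lesssim\;1+\|u_0\|_\Sigma^q+T^{\frac14}(1+\|u_0\|_\Sigma)^q\,X^{\alpha+1},\qquad X:=\|u_1\|_{L_T^4(W^{1,4})},$$
which is superlinear in $X$ and cannot be closed by moving the term to the left-hand side. This is precisely why the paper invokes the Bootstrap Lemma \ref{abs} (with $\theta=\alpha+1>1$), together with the smallness of $T$ and of $X$ at $T=0^+$, to conclude $\|u_1\|_{L_T^4(W^{1,4})}\lesssim 1+\|u_0\|_\Sigma^q$; note also that the stated bound in (1) has no denominator of the form $1-CT^{3/4}$, unlike its subcritical analogue, which is a signal that a different (nonlinear) absorption mechanism is at work. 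Without this continuity/bootstrap step your argument for (1) does not close, and since (2) and (3) both consume the bound from (1) (in (2) through the exponent $\alpha q$ in the numerator, in (3) through the explicit $\|\nabla u_1\|_{L^4_T(L^4)}$ factor), the gap propagates. The rest of your proposal --- the linear absorptions producing the denominators $1-(1+\|u_0\|_\Sigma)^{q-1}T^{3/4}$ in (2) and $1-(1+\|u_0\|_\Sigma)^{q-2}(\cdots)$ in (3), and the verification of the arithmetic constraints for Proposition \ref{sblv} --- is consistent with the paper's proof.
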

\begin{proof}
With previous computation, for small $T>0$,
\begin{eqnarray*}
\|u_1\|_{L_T^4(W^{1,4})}
&\lesssim&\|u_0\|_\Sigma+[T^\frac14\|u_1\|_{L_T^\infty(H^1)}^q+\|u_1\|_{L_T^4(W^{1,4})}\|u_1\|_{L_T^\infty(H^1)}^{q-1}][T^\frac34+\|u_1\|_{L_T^4(W^{1,4})}^\alpha T^{\frac34(1-\frac\alpha4)}]\\
&\lesssim&\|u_0\|_\Sigma+T^\frac14\|u_1\|_{L_T^\infty(H^1)}^q[1+\|u_1\|_{L_T^4(W^{1,4})}][1+\|u_1\|_{L_T^4(W^{1,4})}^\alpha]\\
&\lesssim&\|u_0\|_\Sigma+T^\frac14\|u_1\|_{L_T^\infty(H^1)}^q[\|u_1\|_{L_T^4(W^{1,4})}^{\alpha+1}+1]\\
&\lesssim&1+\|u_0\|_\Sigma^q+T^\frac14(1+\|u_0\|_\Sigma)^q\|u_1\|_{L_T^4(W^{1,4})}^{\alpha+1}.
\end{eqnarray*}
With Lemma \ref{abs}, yields $\|u_1\|_{L_T^4(W^{1,4})}\lesssim 1+\|u_0\|_\Sigma^q$.
Using Moser-Trudinger inequality via Lemma \ref{g'}, we have for any $\varepsilon>0$,
\begin{eqnarray*}
\|xu\|_{L^{\infty}_T(L^2)\cap L^4_T(L^4)}
&\lesssim&\|x|x|^\mu(g(u_1)-g(u_2))\|_{L^1_T(L^2)}+T\|\nabla u\|_{L^{\infty}_T(L^2)}\\
&\lesssim&\sum_i (\|u|x|^{1+\mu} u_i^{q+1}(e^{(1+\varepsilon)|u_i|^2}-1)\|_{L^1_T(L^2)}+\|xu|x|^\mu u_i^{q-1}\|_{L_T^1(L^2)})+T\|u\|_{L^{\infty}_T(H^1)}\\
&\lesssim&\|u|x|^{1+\mu} u_1^{q+1}\|_{L^4_T(L^4)}\|e^{(1+\varepsilon)|u_1|^2}-1\|_{L^\frac43_T(L^4)}+\|xu|x|^\mu u_1^{q-1}\|_{L_T^1(L^2)}+T\|u\|_{L^{\infty}_T(H^1)}\\
&\lesssim&\|u\|_{L_T^\infty(H^1)}\|u_1\|_{L_T^\infty(H^1)}^{q+1}T^\frac14\|e^{(1+\varepsilon)|u_1|^2}-1\|_{L^\frac43_T(L^4)}+\|xu\|_{L_T^4(L^4)}\|u_1\|_{L_T^\infty(H^1)}^{q-1}T^\frac34+T\|u\|_{L^{\infty}_T(H^1)}\\
&\lesssim&\frac{\|u_1\|_{L_T^\infty(H^1)}^{q+1}\|e^{(1+\varepsilon)|u_1|^2}-1\|_{L^\frac43_T(L^4)}+T^\frac34}{1-(1+\|u_0\|_\Sigma)^{q-1}T^\frac34}T^\frac14\|u\|_{L_T^\infty(H^1)}\\
&\lesssim&\frac{(1+\|u_0\|_\Sigma)^{q+1+\alpha q}+T^\frac34}{1-(1+\|u_0\|_\Sigma)^{q-1}T^\frac34}T^\frac14\|u\|_{L_T^\infty(H^1)}.
\end{eqnarray*} 
With previous computation
{\small\begin{eqnarray*}
\|\nabla u\|_{L^4_T(L^4)}
&\lesssim&(1+\|u_0\|_\Sigma)^{q-2}\Big[\|\nabla u\|_{L^4_T(L^4)}(T^\frac34+\|e^{(1+\varepsilon)|u_1|^2}-1\|_{L_T^\frac43(L^4)})\\
&+&\|u\|_{L_T^\infty(H^1)}(T^\frac14+\|\nabla u_1\|_{L_T^4(L^4)}(T^\frac34+\|e^{(1+\varepsilon)|u_1|^2}-1\|_{L_T^\frac43(L^4)}))\Big]\\
&\lesssim&(1+\|u_0\|_\Sigma)^{q-2}\Big[\|\nabla u\|_{L^4_T(L^4)}(T^\frac34+(1+\|u_0\|_\Sigma)^{q\alpha}T^{\frac34(1-\frac\alpha4)})\\
&+&\|u\|_{L_T^\infty(H^1)}(T^\frac14+\|\nabla u_1\|_{L_T^4(L^4)}(T^\frac34+(1+\|u_0\|_\Sigma)^{q\alpha})T^{\frac34(1-\frac\alpha4)})\Big]\\
&\lesssim&\frac{(1+\|u_0\|_\Sigma)^{q-2}\Big[(T^\frac14+\|\nabla u_1\|_{L_T^4(L^4)}(T^\frac34+(1+\|u_0\|_\Sigma)^{q\alpha}T^{\frac34(1-\frac\alpha4)}))]}{1-(1+\|u_0\|_\Sigma)^{q-2}(T^\frac34+(1+\|u_0\|_\Sigma)^{q\alpha}T^{\frac34(1-\frac\alpha4)})}\|u\|_{L_T^\infty(H^1)}.
\end{eqnarray*}}
\end{proof}
\subsection{Global well-posedness in the defocusing case}
This subsection is devoted to prove that the maximal solution of \eqref{eq1} is global if $\epsilon=-1$ and $E(u_0)\leq4\pi$. Recall an important fact that is the time of local existence depends only on
the quantity $\|u_0\|_\Sigma$. Let $u$ to be the unique maximal
solution of \eqref{eq1} in the space ${\mathcal{E}}_T$ for any $0<T<{T^*}$ with initial data $u_0$,
where $0<T^*\leq +\infty$ is the lifespan of $u$. We shall prove that $u$ is global.
By contradiction, suppose that $T^*< +\infty$. Consider for $0<s<T^*$, the problem
$$({\mathcal{P}}_{s})\left\{\begin{array}{cccc}
i\partial_{t}v+\Delta{v}-|x|^2v&=&|x|^\mu g(v)\\
v(s,.)&=&u(s,.).
 \end{array}\right.$$
First, let treat the simplest case $E(u_0)<4\pi$. In this case we have
$$\sup_{[0,T^*]}\|\nabla u(t)\|^2\leq E(u_0)<4\pi.$$
 Using the same arguments used in the local existence, we can find a real $\tau>0$ and a solution
$v$ to $({\mathcal{P}}_{s})$ on $[s,s+\tau]$. According to the section of local existence,
 and using the conservation of energy, $\tau$ does not depend on $s$. Thus, if we let $s$ be
close to $T^*$ such that $s+\tau>T^*$, we can extend $v$ for times higher than $T^*$. This fact
contradicts the maximality of $T^*$. We obtain the result claimed.\\
Second, let treat the limit case
$$E=4\pi\quad\mbox{ and }\quad \sup_{[0,T^*]}\|\nabla u(t)\|^2=\limsup_{T^*}\|\nabla u(t)\|^2=4\pi.$$
Then, since near zero $G(r^2)\simeq r^{q+1}$ and $q>2+2\mu$, then $r^{3+2\mu}\lesssim |G(r^2)|$, thus
$$\liminf_{T^*}\int|x|^\mu G(|u(t)|^2)\,dx=\liminf_{T^*}\int|x|^\mu |u(t)|^{3+2\mu}\,dx=0.$$
Global well-posedness is a consequence of The following result.
\begin{lem}\label{cnt}
Let $T>0$ and $u\in C([0,T],\Sigma)$ a solution to the Schr\"odinger equation \eqref{eq1} with $\epsilon=-1$
such that $E(u_0)+M(u_0)<\infty$. Then, a positive constant $C_0$ depending on $u_0$ exists such that for any $R,R'>0$ and any $0<t<T$ we have
\begin{equation}\label{R}
\int_{B_{R+R'}}|u(t)|^2dx\geq \int_{B_R}|u_0|^2dx -C_0\frac{t}{R'}.
\end{equation}
\end{lem}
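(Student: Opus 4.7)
The plan is a standard localised mass estimate via a smooth cutoff. Fix $R,R'>0$ and choose a radial function $\phi\in C^\infty_c(\R^2,[0,1])$ with $\phi=1$ on $B_R$, $\phi=0$ outside $B_{R+R'}$, and $\|\nabla\phi\|_{L^\infty}\leq C/R'$ (e.g.\ $\phi(x)=\chi(|x|)$ for a suitable one–dimensional profile). Then $\chi_{B_R}\leq\phi\leq\chi_{B_{R+R'}}$, so it suffices to prove
\[
\int \phi(x)|u(t,x)|^2\,dx\geq \int \phi(x)|u_0(x)|^2\,dx-\frac{C_0 t}{R'}.
\]

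The key step is to differentiate $t\mapsto I(t):=\int\phi|u(t)|^2\,dx$ along the flow of \eqref{eq1}. Using $i\dot u=-\Delta u+|x|^2u-\epsilon|x|^\mu g(u)$ and the fact that both $|x|^2|u|^2$ and $|x|^\mu\bar u g(u)=|x|^\mu|u|^2 G'(|u|^2)$ are real-valued (so they contribute nothing to the imaginary part), we get
\[
\frac{d}{dt}I(t)=2\operatorname{Re}\int\phi\bar u\,\dot u\,dx=2\operatorname{Im}\int\phi\bar u\,\Delta u\,dx.
\]
Integrating by parts and noting $\operatorname{Im}\int\phi|\nabla u|^2\,dx=0$,
\[
\frac{d}{dt}I(t)=-2\operatorname{Im}\int(\nabla\phi\cdot\nabla u)\,\bar u\,dx,
\]
so by Cauchy--Schwarz
\[
\Bigl|\frac{d}{dt}I(t)\Bigr|\leq 2\|\nabla\phi\|_{L^\infty}\|\nabla u(t)\|\,\|u(t)\|\leq \frac{C}{R'}\|\nabla u(t)\|\,\|u(t)\|.
\]

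Now I invoke conservation of mass and energy, which hold for $u\in C_T(\Sigma)$ by the preceding theorems. In the defocusing case $\epsilon=-1$, since $G\geq0$,
\[
\|\nabla u(t)\|^2\leq \|\nabla u(t)\|^2+\|xu(t)\|^2+\int|x|^\mu G(|u(t)|^2)\,dx=E(u_0),
\]
and $\|u(t)\|^2=M(u_0)$. Hence $\|\nabla u(t)\|\,\|u(t)\|\leq\sqrt{E(u_0)M(u_0)}$, and setting $C_0:=2C\sqrt{E(u_0)M(u_0)}$ we obtain the bound $|I'(t)|\leq C_0/R'$. Integrating in time from $0$ to $t$ and combining with the two pointwise inequalities $\phi\leq\chi_{B_{R+R'}}$ and $\phi\geq\chi_{B_R}$ yields exactly \eqref{R}.

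I expect no serious obstacle. The only mild technicality is the justification of the formal differentiation: since $a\,priori$ $u$ is only of strong regularity $C_T(\Sigma)\cap L^4_T(W^{1,4})$, one should either regularise $u_0$ and pass to the limit via the continuous dependence, or observe that $t\mapsto\int\phi|u|^2\,dx$ is absolutely continuous with the computed weak derivative, the identity being standard for $H^1$ solutions of Schr\"odinger equations. The cutoff $\phi$ has compact support so no boundary term from the $|x|^2$–potential arises.
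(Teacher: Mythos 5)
Your proof is correct and follows essentially the same route as the paper: a smooth cutoff equal to $1$ on $B_R$, vanishing off $B_{R+R'}$, with gradient of size $1/R'$; differentiation of the localized mass; integration by parts; and control of the resulting term by $\|\nabla u\|\,\|u\|/R'$, uniformly in time via the conserved mass and the (defocusing) energy. The only cosmetic differences are that the paper weights by $\phi^2$ rather than $\phi$, and that you have a harmless sign slip ($2\operatorname{Re}\int\phi\bar u\,\dot u\,dx=-2\operatorname{Im}\int\phi\bar u\,\Delta u\,dx$, not $+$), which is immaterial since you only bound $|I'(t)|$.
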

\begin{proof}[Proof of Lemma \ref{cnt}]
Let $R,R'>0$, $d_R(x):=d(x,B_R)$ and a cut-off function $\phi:=h(1-\frac{d_R}{R'})$, where
$h\in C^{\infty}(\R)$, $h(t)=1$ for $t\geq 1$ and $ h(t)=0$ for $t\leq 0.$ So $\phi(x)=1$ for $x\in B_R$ and $\phi(x)=0$ for $x\notin B_{R+R'}$. Moreover,
$$\nabla\phi (x)=-\frac{x}{R'|x|}h'(1-\frac{d_R(x)}{R'})=-\frac{x}{R'|x|}h'(1-\frac{d_R(x)}{R'}){\bf 1}_{\{R<|x|<R+R'\}}.$$
$$\|\nabla \phi\|_{L^{\infty}}\leq\frac{\|h'\|_{L^{\infty}([0,1])}}{R'}\lesssim\frac{1}{R'}.$$
Multiplying \eqref{eq1} by $\phi^2\bar u$, we obtain
$$\phi^2\bar u(i\dot u+\Delta u-|x|^2 u)=\phi^2|u|^2G'(|u|^2).$$
Integrating on space then taking imaginary part, yields
\begin{eqnarray*}
\partial_t\|\phi u\|^2
&=&-2\Im \int\phi^2\bar u\Delta u dx\\
&=&2\Im \int\nabla(\phi^2\bar u)\nabla u dx\\
&=&4\Im \int(\phi\nabla\phi\bar u\nabla u) dx\geq-\frac{C_0}{R'}.
\end{eqnarray*}
An integrating on time achieves the proof.
\end{proof}
We return to the proof of global well-posedness. With H\"older inequality via \eqref{R}, for any $p\geq1$,
\begin{eqnarray*}
\int_{B_R}|u_0|^2dx -C_0\frac{t}{R'}
&\leq&\||x|^\frac{\mu}p|u(t)|^2\|_{L^p({B_{R+R'}})}\||x|^{-\frac{\mu}p}\|_{L^{p'}({B_{R+R'}})}\\
&\leq&\sqrt\pi\Big(\frac{(R+R')^{2-\frac{p'\mu}p}}{2-\frac{p'\mu}p}\Big)^\frac1{p'}\Big(\int_{B_{R+R'}}|x|^\mu|u(t)|^{2p}\,dx\Big)^\frac1p.
\end{eqnarray*}
Choose $p=\frac32+\mu$ so that $\frac{p'\mu}p<2$. Taking the lower limit when $t$ tends to $T^*$, then $R'\rightarrow\infty$ yields to the contradiction $u_0=0$.
This ends the proof.
\subsection{Local solution in the critical case}
In this subsection, we prove Theorem \ref{per}, about existence of a local solution to \eqref{eq1} in the critical case and without any condition on the data size.\\
 Let $u_0\in \Sigma$. Using Littlewood-Paley theory, we decompose the data  $u_0=(u_0)_{<n}+(u_0)_{>n}$ such that $(u_0)_{>n}\rightarrow0$ in $H^1$ and $(u_0)_{<n}\in H^2\cap \Sigma.$
First, consider the Cauchy problem
$$i\dot v+\Delta v-|x|^2v+|x|^\mu g(v)=0, \quad v_{|t=0}=(u_0)_{<n}.$$
Using the embedding $H^2\hookrightarrow L^\infty$, it is easy to find $0<T_n$ and $v\in C_{T_n}(H^2\cap\Sigma)$ a solution to the previous problem. Arguing as in the previous section, by a standard fixed point argument, we find a solution to the perturbed Cauchy problem
$$i\dot w+\Delta w-|x|^2 w=|x|^\mu(g(v+w)-g(v)), \quad w_{|t=0}=(u_0)_{>n}.$$
in the space $C_T(\Sigma)\cap L_T^4(W^{1,4})$. Thus $u:=v+w$ is a solution to \eqref{eq1}.
\section{The stationary problem}
The goal of this section is to prove Theorem \ref{tgs} about existence of a ground state solution to the stationary problem associated to \eqref{eq1}. Precisely, we look for a minimizing of \eqref{min} which is a solution to
\begin{equation}\label{gr}
-\Delta\phi+|x|^2\phi+\phi=|x|^\mu\phi G'(|\phi|^2):=|x|^\mu g(\phi),\quad0\neq\phi\in {\Sigma}.
\end{equation}
We assume in all this section that \eqref{f} is satisfied and we prove that \eqref{gr} has a ground state in the meaning that it has a nontrivial positive radial solution which minimizes the action $S$ when $K_{\alpha,\beta}$ vanishes. 
\begin{rem}\mbox{}\\
\begin{enumerate}
\item
In all this section, we are concerned with the focusing case, so we take $\epsilon=1$.
\item
If $\phi$ is a solution to \eqref{gr}, then $e^{it}\phi$ is a solution to the Schr\"odinger problem \eqref{eq1} with data $\phi$. This particular global solution said soliton or standing wave does not satisfy neither decay nor scattering.
\end{enumerate}
\end{rem}
Here and hereafter, we denote the quadratic part and the nonlinear parts of $K_{\alpha,\beta}$,
\begin{gather*}
K^Q_{\alpha,\beta}(v):=2\int\Big[\alpha|\nabla v|^2+(\alpha+\beta)|v|^2+(\alpha+2\beta)|xv|^2\Big]\,dx,\\
K^N_{\alpha,\beta}(v):=-2\int|x|^\mu\Big[\alpha |v|g(|v|)+\beta(1+\frac\mu2) G(|v|^2)\Big]\,dx.
\end{gather*}
We denote also the operator $\mathcal L_{\alpha,\beta}S(v):=\partial_{\lambda}(S(v_{\alpha,\beta}^{\lambda}))_{|\lambda=0}$ and
\begin{eqnarray*}
H_{\alpha,\beta}(v)
&:=&(1-\frac{\mathcal L_{\alpha,\beta}}{2(\alpha+2\beta)})S(v)\\
&=&\frac1{\alpha+2\beta}\int\Big[\beta(2|\nabla v|^2+|v|^2)+\alpha  |v|g(|v|)-(\alpha-\beta(\frac\mu2-1))G(|v|^2)\;dx\Big]\\
&=&\frac1{\alpha+2\beta}\Big[\beta(2\|\nabla v\|^2+\|v\|^2)+\int|x|^\mu\Big[ \alpha D-(\alpha-\beta(\frac\mu2-1))\Big]G(|v|^2)\;dx.
\end{eqnarray*}
The proof of Theorem \ref{tgs} is approached via a series of lemmas. Let us start by the so-called generalized Pohozaev identity, which is a useful classical result about solution to \eqref{gr}.
\begin{prop}\label{poh}
If $\phi$ is a solution to \eqref{gr}, then for any $\alpha,\beta\in\R$, we have
$$\alpha\|\nabla\phi\|^2+(\alpha+\beta)\|\phi\|^2+(\alpha+2\beta)\|x\phi\|^2-\int|x|^{\mu}\Big[\alpha|\phi|g(|\phi|)+\beta(1+\frac\mu2) G(|\phi|^2)\Big]dx=0.$$
\end{prop}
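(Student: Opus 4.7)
The plan is to exploit the variational structure of \eqref{gr}: it is the Euler--Lagrange equation of $S = E + M$ on $\Sigma$ (with $\epsilon = 1$), so any solution $\phi$ is a critical point of $S$ and $dS(\phi)[\eta] = 0$ for every admissible variation $\eta$. I would then test this against the two-parameter scaling that defines $\mathcal L_{\alpha,\beta}$ in the excerpt, namely
$$\phi_{\alpha,\beta}^\lambda(x) := e^{\alpha\lambda}\,\phi(e^{-\beta\lambda}x), \qquad \lambda\in\R,$$
whose tangent vector at $\lambda = 0$ is $\eta = \alpha\phi - \beta\,x\cdot\nabla\phi$.

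The verification is a direct computation. By the change of variable $y = e^{-\beta\lambda}x$ in $\R^2$,
$$\|\phi_{\alpha,\beta}^\lambda\|^2 = e^{2(\alpha+\beta)\lambda}\|\phi\|^2, \quad \|\nabla\phi_{\alpha,\beta}^\lambda\|^2 = e^{2\alpha\lambda}\|\nabla\phi\|^2, \quad \|x\phi_{\alpha,\beta}^\lambda\|^2 = e^{2(\alpha+2\beta)\lambda}\|x\phi\|^2,$$
$$\int|x|^\mu G(|\phi_{\alpha,\beta}^\lambda|^2)\,dx = e^{(\mu+2)\beta\lambda}\int|y|^\mu G\bigl(e^{2\alpha\lambda}|\phi(y)|^2\bigr)\,dy.$$
Differentiating at $\lambda = 0$ and using $|\phi|g(|\phi|) = |\phi|^2 G'(|\phi|^2)$, the four contributions combine into
$$\partial_\lambda S(\phi_{\alpha,\beta}^\lambda)\big|_{\lambda=0} = 2\Bigl(\alpha\|\nabla\phi\|^2 + (\alpha+\beta)\|\phi\|^2 + (\alpha+2\beta)\|x\phi\|^2 - \int|x|^\mu\bigl[\alpha|\phi|g(|\phi|) + \beta(1+\tfrac{\mu}{2}) G(|\phi|^2)\bigr]dx\Bigr),$$
which is exactly twice the left-hand side of the stated identity. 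Since $\phi$ is a critical point of $S$, the chain rule gives $\partial_\lambda S(\phi_{\alpha,\beta}^\lambda)|_{\lambda=0} = dS(\phi)[\eta] = 0$, and the proposition follows for every $(\alpha,\beta)\in\R^2$.

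The main obstacle is justifying these manipulations under only $\phi\in\Sigma$ and exponential-growth $G$. The differentiation under the integral sign for the nonlinear term I would handle via dominated convergence: for $\lambda$ in a compact neighbourhood of $0$, both $\|\nabla\phi_{\alpha,\beta}^\lambda\|$ and $\|\phi_{\alpha,\beta}^\lambda\|_{\Sigma}$ remain bounded, so the Moser--Trudinger inequality of Proposition \ref{prop3}, combined with the near-zero behaviour $G \simeq r^{q_g}$ from \eqref{f}, produces an integrable majorant for the difference quotient of $|x|^\mu G(|\phi_{\alpha,\beta}^\lambda|^2)$. For the chain rule I would bootstrap \eqref{gr} to $\phi\in H^2_{\mathrm{loc}}$ by standard elliptic regularity (controlled using Proposition \ref{sblv}), so that $\eta = \alpha\phi - \beta\,x\cdot\nabla\phi$ belongs to $\Sigma$ and $dS(\phi)[\eta]$ is well defined. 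As a cross-check one may alternatively split $K_{\alpha,\beta} = \alpha K_{1,0} + \beta K_{0,1}$: testing \eqref{gr} against $\bar\phi$ and taking the real part yields $K_{1,0}(\phi) = 0$, while testing against $x\cdot\nabla\bar\phi$ and applying the two-dimensional identities $\mathrm{Re}\int\Delta\phi\,(x\cdot\nabla\bar\phi)\,dx = 0$ and $\int f(x)\,x\cdot\nabla u\,dx = -\int\nabla\cdot(fx)\,u\,dx$ for $f\in\{1,|x|^2,|x|^\mu\}$ yields $K_{0,1}(\phi) = 0$.
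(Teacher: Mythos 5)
Your proposal is correct and is essentially the paper's own argument: the author likewise regards $\phi$ as a critical point of $S(v)=\|v\|_\Sigma^2-\int|x|^\mu G(|v|^2)\,dx$, differentiates $S$ along the scaling family $\phi_{\alpha,\beta}^\lambda=e^{\alpha\lambda}\phi(e^{-\beta\lambda}\cdot)$ at $\lambda=0$, and identifies the result with $K_{\alpha,\beta}(\phi)=\langle S'(\phi),\partial_\lambda\phi_{\alpha,\beta}^\lambda|_{\lambda=0}\rangle=0$. Your extra care about dominated convergence, elliptic regularity for the admissibility of the variation, and the cross-check via $K_{1,0}$ and $K_{0,1}$ goes beyond what the paper records (it simply calls this "a simple computation"), but the route is the same.
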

\begin{proof}
Take the action defined on $\Sigma$ by
$$S(v):=\|v\|_\Sigma^2-\int|x|^\mu G(|v|^2)\,dx.$$
Then, $S'(\phi)=2\langle-\Delta \phi+\phi+|x|^2\phi-|x|^\mu g(\phi),.\rangle=0$ because $\phi$ is a solution to \eqref{gr}. For $\alpha,\beta,\lambda\in\R$ and $v\in\Sigma$, we denote the function $v_{\alpha,\beta}^{\lambda}:=e^{\alpha\lambda}v(e^{-\beta\lambda}.)$ and $K_{\alpha,\beta}(v):=\partial_{\lambda}(S(v_{\alpha,\beta}^{\lambda}))_{|\lambda=0}$. A simple computation yields
\begin{equation}
\frac12K_{\alpha,\beta}(\phi)=\alpha\|\nabla\phi\|^2+(\alpha+\beta)\|\phi\|^2+(\alpha+2\beta)\|x\phi\|^2-\int|x|^\mu\Big[\alpha|\phi|g(|\phi|)+\beta(1+\frac\mu2) G(|\phi|^2)\Big]dx.\end{equation}
Since $\partial_{\lambda}(S(\phi_{\alpha,\beta}^{\lambda}))_{|\lambda=0}=\langle S'(\phi),\partial_{\lambda}(\phi_{\alpha,\beta}^{\lambda})_{|\lambda=0}\rangle=0$, we have $K_{\alpha,\beta}(\phi)=0$.
\end{proof}
We need the following result.
\begin{lem}
Let $(0,0)\neq(\alpha,\beta)\in\R_+^2$ and $0\neq\phi\in\Sigma$. Then, denoting for simplicity $\mathcal L:=\mathcal L_{\alpha,\beta}$ and $\phi^\lambda:=\phi_{\alpha,\beta}^\lambda$, we have
\begin{enumerate}
\item
$\min(\mathcal LH_{\alpha,\beta}(\phi),H_{\alpha,\beta}(\phi))> 0.$
\item
$\lambda\mapsto H_{\alpha,\beta}(\phi^{\lambda})$ is increasing.
\end{enumerate}
\end{lem}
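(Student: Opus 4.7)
The plan is to prove (1) by direct computation exploiting the sign conditions in \eqref{f}, and then to deduce (2) as a corollary via the semigroup property of the scaling. From $\phi^\lambda_{\alpha,\beta}(x) = e^{\alpha\lambda}\phi(e^{-\beta\lambda}x)$ one checks $(\phi^\lambda)^t = \phi^{\lambda+t}$, hence $\frac{d}{d\lambda} H_{\alpha,\beta}(\phi^\lambda) = \mathcal L H_{\alpha,\beta}(\phi^\lambda)$. Since $\phi\neq 0$ forces $\phi^\lambda \neq 0$ for every $\lambda\in\R$, part (1) applied pointwise in $\lambda$ will immediately give the strict monotonicity in (2); so the whole lemma reduces to (1).

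For the positivity of $H_{\alpha,\beta}(\phi)$, I would rewrite the operator bracket in the definition as $\alpha D - (\alpha - \beta(\mu/2-1)) = \alpha(D-1) + \beta(\mu/2-1)$, which turns the formula into
\begin{equation*}
(\alpha+2\beta)H_{\alpha,\beta}(\phi) = \beta(2\|\nabla\phi\|^2 + \|\phi\|^2) + \alpha\int|x|^\mu (D-1)G(|\phi|^2)\,dx + \beta(\tfrac{\mu}{2}-1)\int|x|^\mu G(|\phi|^2)\,dx.
\end{equation*}
Under the hypotheses $\alpha,\beta\geq 0$, $\mu > 2$, together with $(D-1)G > 0$ from \eqref{f} and the standing positivity of $G$, all four summands are non-negative, and the fact that $(\alpha,\beta)\neq (0,0)$ and $\phi\neq 0$ ensures that at least one is strictly positive.

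For the positivity of $\mathcal L H_{\alpha,\beta}(\phi)$, I would differentiate the formula above at $\lambda=0$ using the scaling laws $\|\nabla\phi^\lambda\|^2 = e^{2\alpha\lambda}\|\nabla\phi\|^2$, $\|\phi^\lambda\|^2 = e^{2(\alpha+\beta)\lambda}\|\phi\|^2$, and (after the substitution $y = e^{-\beta\lambda}x$) $\int|x|^\mu F(|\phi^\lambda|^2)\,dx = e^{(2+\mu)\beta\lambda}\int|y|^\mu F(e^{2\alpha\lambda}|\phi|^2)\,dy$ applied to $F=(D-1)G$ and $F=G$. The scaling derivative produces new integrands $D((D-1)G)$ and $DG$, which I would convert via the identities $D(D-1)G = (D-1)^2G + (D-1)G$ and $DG = (D-1)G + G$ so that every nonlinear contribution is expressed purely in terms of the conditionally positive quantities $G$, $(D-1)G$, $(D-1)^2G$. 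Collecting coefficients and invoking the algebraic simplification $\alpha(2+\mu)\beta + 2\alpha\beta(\mu/2-1) = 2\alpha\mu\beta$, the result should reduce to
\begin{align*}
(\alpha+2\beta)\mathcal L H_{\alpha,\beta}(\phi) =\ & 4\alpha\beta\|\nabla\phi\|^2 + 2\beta(\alpha+\beta)\|\phi\|^2 + 2\alpha^2 \int|x|^\mu (D-1)^2G(|\phi|^2)\,dx \\
& + 2\alpha(\alpha + \mu\beta)\int|x|^\mu (D-1)G(|\phi|^2)\,dx + \beta(\tfrac{\mu}{2}-1)(2\alpha + (2+\mu)\beta)\int|x|^\mu G(|\phi|^2)\,dx.
\end{align*}
Every coefficient is non-negative under $\alpha,\beta \geq 0$ and $\mu > 2$, and every integrand is strictly positive by \eqref{f}; strict positivity of the sum is then secured by a short case split, since $\alpha > 0$ makes the $(D-1)^2G$-summand strict, while $\alpha = 0 < \beta$ makes the $\|\phi\|^2$-summand strict.

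The only delicate point is the algebraic bookkeeping in the $\mathcal L H_{\alpha,\beta}$ computation: the scaling derivative naturally produces cross-terms whose individual signs are indefinite, and it is precisely the combination of the two operator identities above with the hypothesis $\mu > 2$ (which makes $\beta(\mu/2-1)\geq 0$) that allows one to repackage everything into non-negative multiples of $\|\nabla\phi\|^2$, $\|\phi\|^2$, $\int|x|^\mu G$, $\int|x|^\mu(D-1)G$ and $\int|x|^\mu(D-1)^2G$. Once this reorganisation is in place the positivity is transparent, and the monotonicity in (2) follows at once by applying the result pointwise in $\lambda$ to $\phi^\lambda$.
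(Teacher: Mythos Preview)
Your argument is correct. Both the positivity of $H_{\alpha,\beta}(\phi)$ and the derivation of part (2) from part (1) via the semigroup property $(\phi^\lambda)^t=\phi^{\lambda+t}$ match the paper exactly. Your expanded formula for $(\alpha+2\beta)\mathcal L H_{\alpha,\beta}(\phi)$ checks out term by term, and the case split $\alpha>0$ versus $\alpha=0<\beta$ indeed secures strict positivity.

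Where you diverge from the paper is in how you handle $\mathcal L H_{\alpha,\beta}$. You differentiate $H_{\alpha,\beta}(\phi^\lambda)$ directly, then use the operator identities $D(D-1)G=(D-1)^2G+(D-1)G$ and $DG=(D-1)G+G$ to rewrite every nonlinear term in the ``positive basis'' $G,\ (D-1)G,\ (D-1)^2G$, and finally observe that all coefficients are non-negative for $\mu>2$. The paper instead exploits a factorisation at the level of the operator $\mathcal L$ acting on $S$: it writes
\[
\mathcal L H_{\alpha,\beta}(\phi)=-\tfrac{1}{2(\alpha+2\beta)}(\mathcal L-2\alpha)(\mathcal L-2(\alpha+2\beta))S(\phi)+2\alpha H_{\alpha,\beta}(\phi),
\]
notes that $(\mathcal L-2\alpha)$ kills $\|\nabla\phi\|^2$ and $(\mathcal L-2(\alpha+2\beta))$ kills $\|x\phi\|^2$, and then identifies the surviving nonlinear piece with $\int|x|^\mu[\alpha(D-1)+\beta(1+\tfrac\mu2)][\alpha(D-1)+\beta(\tfrac\mu2-1)]G(|\phi|^2)\,dx$, which is manifestly positive under \eqref{f}. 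The two computations are algebraically equivalent (expanding the paper's factored product reproduces your coefficients), but the paper's factorisation explains structurally \emph{why} the quadratic terms in $\|\nabla\phi\|^2$ and $\|x\phi\|^2$ disappear and why the nonlinear part comes out as a product of two non-negative first-order operators in $D-1$; your route, by contrast, is more hands-on and makes each individual coefficient explicit, which has its own virtue when one wants to track exactly where the hypothesis $\mu>2$ enters.
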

\begin{proof}
If $\alpha=0$, clearly $\mathcal LH_{\alpha,\beta}(\phi)>0$. Else, with \eqref{f},
\begin{eqnarray*}
H_{\alpha,\beta}(\phi)
&=&\frac1{\alpha+2\beta}\Big[\beta(2\|\nabla \phi\|^2+\|\phi\|^2)+\alpha\int\Big( |\phi|g(|\phi|)-(1-\frac\beta\alpha(\frac\mu2-1))G(|\phi|^2)\Big)\;dx\Big]\\
&=&\frac1{\alpha+2\beta}\Big[\beta(2\|\nabla \phi\|^2+\|\phi\|^2)+\alpha\int\Big(D-1+\frac\beta\alpha(\frac\mu2-1)\Big)G(|\phi|^2)\;dx\Big]>0.
\end{eqnarray*}
Moreover, with a direct computation
\begin{eqnarray*}
\mathcal LH_{\alpha,\beta}(\phi)
&=&\mathcal L(1-\frac1{2(\alpha+2\beta)}\mathcal L)S(\phi)\\
&=&-\frac1{2(\alpha+2\beta)}(\mathcal L-2\alpha)(\mathcal L-2(\alpha+2\beta))S(\phi)+2\alpha(1-\frac1{2(\alpha+2\beta)}\mathcal L)S(\phi)\\
&=&-\frac1{2(\alpha+2\beta)}(\mathcal L-2\alpha)(\mathcal L-2(\alpha+2\beta))S(\phi)+2\alpha H_{\alpha,\beta}(\phi).
\end{eqnarray*}
Now, since $(\mathcal L-2\alpha)\|\nabla\phi\|^2=(\mathcal L-2(\alpha+2\beta))\|x\phi\|^2=0$, we have $(\mathcal L-2\alpha)(\mathcal L-2(\alpha+2\beta))[\|\nabla\phi\|^2+\|x\phi\|^2]=0$. Moreover, $\mathcal L[|x|^\mu G(|\phi|^2)]=2|x|^\mu[(\alpha D+\beta(1+\frac\mu2))G](|\phi|^2)$, so
\begin{eqnarray*}
\mathcal LH_{\alpha,\beta}(\phi)
&\geq&\frac1{2(\alpha+2\beta)}\int(\mathcal L-2\alpha)(\mathcal L-2(\alpha+2\beta))|x|^\mu G(|\phi|^2)dx\\
&=&\frac2{\alpha+2\beta}\int(|x|^\mu[\alpha(D-1)+\beta(1+\frac\mu2)][\alpha(D-1)+\beta(\frac\mu2-1)]G(|\phi|^2)\,dx)>0.
\end{eqnarray*}
The last inequality comes from \eqref{f}. The two first points of the Lemma follow. The third point is a consequence of the equality $\partial_{\lambda}H_{\alpha,\beta}(\phi^{\lambda})=\mathcal LH_{\alpha,\beta}(\phi^{\lambda})$.
\end{proof}
The next intermediate result is the following
\begin{lem}\label{lm1}
Let $(0,0)\neq(\alpha,\beta)\in\R_+^2$. Take a bounded sequence  $0\neq\phi_n\in {\Sigma}$ such that $\displaystyle\lim_nK^Q_{\alpha,\beta}(\phi_n)=0$. Then, there exists $n_0\in\N$ such that
$K_{\alpha,\beta}(\phi_n)>0$ for all $n\geq n_0.$
\end{lem}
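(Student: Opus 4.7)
The plan is to split $K_{\alpha,\beta}(\phi_n)=K^Q_{\alpha,\beta}(\phi_n)+K^N_{\alpha,\beta}(\phi_n)$ and establish that
$$|K^N_{\alpha,\beta}(\phi_n)|=o(K^Q_{\alpha,\beta}(\phi_n))\quad\text{as } n\to\infty.$$
Because $\alpha,\beta\geq 0$ with $(\alpha,\beta)\neq(0,0)$, the quadratic part $K^Q_{\alpha,\beta}(\phi_n)$ is non-negative, and in fact strictly positive for each $n$ (some term of the form $c\|\phi_n\|^2$ or $c\|\nabla\phi_n\|^2$ with $c>0$ is always present, and $\phi_n\neq 0$ makes it strictly positive). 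Hence $K^Q_{\alpha,\beta}(\phi_n)>0$ for every $n$, and once $|K^N|<K^Q$ holds eventually we get $K_{\alpha,\beta}(\phi_n)\geq K^Q_{\alpha,\beta}(\phi_n)-|K^N_{\alpha,\beta}(\phi_n)|>0$, as desired.

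First I would extract what the hypothesis $K^Q_{\alpha,\beta}(\phi_n)\to 0$ forces in the two subcases. If $\alpha>0$, all three coefficients $\alpha$, $\alpha+\beta$, $\alpha+2\beta$ are strictly positive, so the full conformal norm $\|\phi_n\|_\Sigma^2=\|\nabla\phi_n\|^2+\|\phi_n\|^2+\|x\phi_n\|^2$ tends to zero. If $\alpha=0$ (so $\beta>0$), we only obtain $\|\phi_n\|,\|x\phi_n\|\to 0$, while $\|\nabla\phi_n\|$ is merely bounded, thanks to the boundedness assumption in $\Sigma$. Next I would bound the nonlinear part. Combining $g\simeq r^q$ near zero with $q>2+2\mu$ and the growth condition \eqref{sc} or \eqref{cc} (compare Lemmas \ref{g'} and \ref{g}), one gets the pointwise bound
$$\bigl||v|g(|v|)\bigr|+|G(|v|^2)|\;\lesssim\;|v|^{q+1}e^{(\alpha_g+\eta)|v|^2}$$
for any $\eta>0$, where $\alpha_g$ can be replaced by an arbitrarily small constant in the subcritical case. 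Splitting $e^{(\alpha_g+\eta)|v|^2}=1+(e^{(\alpha_g+\eta)|v|^2}-1)$ separates two contributions: the polynomial piece $\int|x|^\mu|\phi_n|^{q+1}\,dx$ is controlled by Proposition \ref{sblv}(3) as $\lesssim\|\nabla\phi_n\|^{q-1-\mu}\|\phi_n\|^{2+\mu}$, and the exponential piece is treated by H\"older with exponents $(p,p')$ together with Proposition \ref{sblv}(3) for $\||x|^\mu|\phi_n|^{q+1}\|_{L^p}$ and the Moser--Trudinger inequality (Proposition \ref{prop3}) for $\|e^{(\alpha_g+\eta)|\phi_n|^2}-1\|_{L^{p'}}$, the latter being admissible as soon as $p'(\alpha_g+\eta)\|\nabla\phi_n\|^2<4\pi$.

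To finish, I would combine these estimates case by case. When $\alpha>0$, the vanishing $\|\phi_n\|_\Sigma\to 0$ ensures the Moser--Trudinger threshold is met for large $n$, both pieces are bounded by $\|\phi_n\|_\Sigma^{q+1}$, and since $K^Q_{\alpha,\beta}(\phi_n)\simeq\|\phi_n\|_\Sigma^2$ with $q+1>3$, the ratio $|K^N|/K^Q\lesssim\|\phi_n\|_\Sigma^{q-1}\to 0$. When $\alpha=0$, $K^N(v)$ reduces to the $G$-term and the Sobolev bounds yield $|K^N(\phi_n)|\lesssim\|\phi_n\|^{2+\mu}$, which is $o(\|\phi_n\|^2)=o(K^Q(\phi_n))$ since $\mu>0$. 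The main obstacle will be the $\alpha=0$ case in the critical regime: there $K^Q$ exerts no control on $\|\nabla\phi_n\|$, so one must carefully use the $\Sigma$-boundedness of the sequence and tune the H\"older exponent $p'$ close to $1$ so that the Moser--Trudinger inequality still applies uniformly in $n$ to the exponential tail; in the subcritical case this difficulty disappears because $\eta$ (and hence $\alpha_g$) can be taken arbitrarily small.
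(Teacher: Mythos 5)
Your proposal is correct and follows essentially the same route as the paper: the paper likewise writes $K_{\alpha,\beta}=K^Q_{\alpha,\beta}+K^N_{\alpha,\beta}$, bounds the nonlinear part pointwise by $|x|^\mu|\phi_n|^{q-1}(e^{a|\phi_n|^2}-1)$, applies H\"older with a large exponent $p$, the Moser--Trudinger inequality, and Proposition \ref{sblv}(3) to get $|K^N_{\alpha,\beta}(\phi_n)|\lesssim\|\phi_n\|^{2+\mu}\|\nabla\phi_n\|^{q-1-\mu-\frac2p}=o\bigl(K^Q_{\alpha,\beta}(\phi_n)\bigr)$, treating $\alpha>0$ and $\alpha=0$ exactly as you do. The one caveat you flag --- that in the critical case the Moser--Trudinger threshold $ap'\sup_n\|\nabla\phi_n\|^2<4\pi$ is not automatic from mere boundedness --- is equally present in the paper's proof, and is handled there only downstream (in the proof of Theorem \ref{tgs}) by rescaling the minimizing sequence.
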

\begin{proof}[Proof of Lemma \ref{lm1}]
 Using Lemma \ref{exp} via \eqref{f}, there exists $q>3+2\mu$ and $a>0$ such that $\displaystyle\sup_{r\geq0}\frac{|rg(r)+G(r^2)|}{r^{q-1}(e^{ar^2}-1)}\lesssim 1$. By H\"older inequality, for any $p\geq 1$,
\begin{eqnarray*}
K^N_{\alpha,\beta}(\phi_n)
&\lesssim&\int|x|^\mu|\phi_n|^{q-1}(e^{a|\phi_n|^2}-1)dx\\
&\lesssim&\||x|^\mu\phi_n^{q-1}\|_p\|e^{a|\phi_n|^2}-1\|_{p'}\\
&\lesssim&\||x|^\mu\phi_n^{q-1}\|_p\|e^{ap'|\phi_n|^2}-1\|_{1}^{\frac1{p'}}.
\end{eqnarray*}
Taking $p$ satisfying $a{p'}\|\phi_n\|_{H^1}^2<2\pi$, by Moser-Trudinger inequality $K^N_{\alpha,\beta}(\phi_n)\lesssim\||x|^\mu\phi_n^{q-1}\|_p\|\phi_n\|^\frac2{p'}.$ Using the interpolation inequality in Lemma \ref{sblv} via the fact that $q-1>2\mu+\frac2p$, we have
$$K^N_{\alpha,\beta}(\phi_n)
\lesssim\||x|^{p\mu}\phi_n^{p(q-1)}\|_1^{\frac1p}\|\phi_n\|^\frac2{p'}\lesssim \|\phi_n\|^{\mu+2}\|\nabla \phi_n\|^{q-1-\mu-\frac2p}.$$
If $\alpha\neq0$, the proof is achieved by the fact that $\|\nabla\phi_n\|^2\lesssim K^Q_{\alpha,\beta}(\phi_n)$ and taking $p>2$ such that $q-1-\mu-\frac2p>2$. If $\alpha=0$, the proof is closed since $\|\phi_n\|^{2+\mu}=o(\|\phi_n\|^2)$ and $\|\phi_n\|^2\lesssim K^Q_{0,\beta}(\phi_n)$.
\end{proof}
The last auxiliary result of this section reads
\begin{lem}\label{lm3}
Let $(0,0)\neq(\alpha,\beta)\in\R_+^2$. 
 Then
\begin{equation}
m_{\alpha,\beta}=\inf_{0\neq\phi\in{\Sigma}}\{H_{\alpha,\beta}(\phi),\;\mbox{s.t}\; K_{\alpha,\beta}(\phi)\leq 0\}.
\end{equation}
\end{lem}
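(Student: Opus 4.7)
The plan is to prove the two inequalities separately. The $(\geq)$ direction is essentially by definition: since $H_{\alpha,\beta}=S-\tfrac{1}{2(\alpha+2\beta)}K_{\alpha,\beta}$, any $v$ with $K_{\alpha,\beta}(v)=0$ satisfies $H_{\alpha,\beta}(v)=S(v)$, so the set $\{H_{\alpha,\beta}(v):K_{\alpha,\beta}(v)=0\}$ coincides with $\{S(v):K_{\alpha,\beta}(v)=0\}$, and enlarging the constraint to $K_{\alpha,\beta}\leq0$ can only decrease the infimum. Hence $\inf\{H_{\alpha,\beta}:K_{\alpha,\beta}\leq0\}\leq m_{\alpha,\beta}$.

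For the reverse $(\leq)$ direction, the plan is, given $0\neq v\in\Sigma$ with $K_{\alpha,\beta}(v)<0$, to produce a scaled function $v^{\lambda^{*}}:=e^{\alpha\lambda^{*}}v(e^{-\beta\lambda^{*}}\cdot)$ with $\lambda^{*}\leq 0$ such that $K_{\alpha,\beta}(v^{\lambda^{*}})=0$ and $S(v^{\lambda^{*}})\leq H_{\alpha,\beta}(v)$. Once this is done, $m_{\alpha,\beta}\leq S(v^{\lambda^{*}})=H_{\alpha,\beta}(v^{\lambda^{*}})\leq H_{\alpha,\beta}(v)$, where the last inequality uses the monotonicity of $\lambda\mapsto H_{\alpha,\beta}(v^{\lambda})$ established in the previous lemma, and taking the infimum closes the argument.

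To find $\lambda^{*}$, I study the scaling law of each term appearing in $K_{\alpha,\beta}(v^{\lambda})$. A direct substitution gives
\begin{eqnarray*}
K_{\alpha,\beta}(v^{\lambda})
&=&2\alpha\,e^{2\alpha\lambda}\|\nabla v\|^{2}+2(\alpha+\beta)e^{2(\alpha+\beta)\lambda}\|v\|^{2}+2(\alpha+2\beta)e^{2(\alpha+2\beta)\lambda}\|xv\|^{2}\\
&&-2\alpha\,e^{(2+\mu)\beta\lambda}\int|y|^{\mu}\bigl(e^{2\alpha\lambda}|v|^{2}\bigr)G'\bigl(e^{2\alpha\lambda}|v|^{2}\bigr)\,dy\\
&&-(2+\mu)\beta\,e^{(2+\mu)\beta\lambda}\int|y|^{\mu}G\bigl(e^{2\alpha\lambda}|v|^{2}\bigr)\,dy.
\end{eqnarray*}
Since $g\simeq r^{q}$ near zero with $q>3+2\mu$, we have $G(s)=O(s^{(q+1)/2})$ and $sG'(s)=O(s^{(q+1)/2})$ as $s\to 0$, so the two nonlinear pieces behave like $e^{((q+1)\alpha+(2+\mu)\beta)\lambda}$ times a finite constant as $\lambda\to-\infty$. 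Comparing with the slowest-decaying positive quadratic term: when $\alpha>0$ one needs $(q+1)\alpha+(2+\mu)\beta>2\alpha$, and when $\alpha=0,\beta>0$ one needs $(2+\mu)\beta>2\beta$; both hold thanks to $q>3+2\mu>1$ and $\mu>2$. Consequently $K_{\alpha,\beta}(v^{\lambda})>0$ for $\lambda$ sufficiently negative. Since $K_{\alpha,\beta}(v^{0})=K_{\alpha,\beta}(v)<0$ and $\lambda\mapsto K_{\alpha,\beta}(v^{\lambda})$ is continuous, the intermediate value theorem furnishes the desired $\lambda^{*}<0$.

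The main obstacle is the asymptotic analysis of $K_{\alpha,\beta}(v^{\lambda})$ as $\lambda\to-\infty$: one must uniformly dominate the nonlinear integrands by a power of $|v|$ (via the near-zero behavior of $G$ together with the fact that $e^{2\alpha\lambda}|v|^{2}$ is pointwise small and uniformly bounded), so that dominated convergence applies and the decay rate $(q+1)\alpha+(2+\mu)\beta$ is actually attained. The rest of the argument (monotonicity of $H_{\alpha,\beta}$ along the flow, and the identity $H_{\alpha,\beta}=S$ on $\{K_{\alpha,\beta}=0\}$) is immediate from the formulas already in place.
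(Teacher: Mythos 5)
Your proposal is correct and follows essentially the same route as the paper: one inequality is immediate from $H_{\alpha,\beta}=S-\tfrac{1}{2(\alpha+2\beta)}K_{\alpha,\beta}$, and the other is obtained by sliding $v$ along the scaling flow $v^{\lambda}=e^{\alpha\lambda}v(e^{-\beta\lambda}\cdot)$ to $\lambda\to-\infty$, using the intermediate value theorem to find $\lambda^{*}<0$ with $K_{\alpha,\beta}(v^{\lambda^{*}})=0$ and the monotonicity of $\lambda\mapsto H_{\alpha,\beta}(v^{\lambda})$ to conclude. The only difference is how the positivity of $K_{\alpha,\beta}(v^{\lambda})$ for very negative $\lambda$ is justified: the paper simply invokes Lemma \ref{lm1} (whose Moser--Trudinger and radial interpolation estimate $|K^{N}_{\alpha,\beta}|\lesssim \|v\|^{\mu+2}\|\nabla v\|^{q-1-\mu-2/p}$ is exactly what makes your ``main obstacle'' --- dominating the nonlinear integrand for an unbounded $v\in\Sigma$ --- disappear), so you should cite that lemma rather than redo the asymptotics by dominated convergence.
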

\begin{proof}
Let $m_1$ be the right hand side, it is sufficient to prove that $m\leq m_1$. Take $\phi\in{\Sigma}$ such that $K_{\alpha,\beta}(\phi)<0$ then by Lemma \ref{lm1},  the fact that $\displaystyle\lim_{\lambda\rightarrow-\infty}K_{\alpha,\beta}^Q(\phi^{\lambda}_{\alpha,\beta})=0$ and $\lambda\mapsto H_{\alpha,\beta}(\phi_{\alpha,\beta}^{\lambda})$ is increasing, there exists $\lambda<0$ such that
\begin{equation}\label{0}
K_{\alpha,\beta}(\phi^{\lambda}_{\alpha,\beta})=0,\; H_{\alpha,\beta}(\phi_{\alpha,\beta}^{\lambda})\leq H_{\alpha,\beta}(\phi).
\end{equation}
The proof is closed.
\end{proof}
Now, we are in a position to prove the main result.\\
{\bf Proof of Theorem \ref{tgs}}\\
\underline{First case $\alpha\neq0$ and $g$ subcritical}.\\
Let $(\phi_n)$ a minimizing sequence, namely
\begin{equation}\label{m}
0\neq\phi_n\in{\Sigma},\;K_{\alpha,\beta}(\phi_n)= 0\;\mbox{and}\;\lim_nH_{\alpha,\beta}(\phi_n)=\lim_nS(\phi_n)=m_{\alpha,\beta}.
\end{equation}
Then, $\alpha\Big[\|\phi_n\|_\Sigma^2-\int|x|^\mu|\phi_n|g(|\phi_n|)\;dx\Big]=\beta\Big[(1+\frac\mu2)\int|x|^\mu G(|\phi_n|^2)\,dx-\|\phi_n\|^2-2\|x\phi_n\|^2\Big]$
and $\Big(\|\phi_n\|_\Sigma^2-\int|x|^\mu G(|\phi_n|^2)\Big)\rightarrow m_{\alpha,\beta}$. Denoting $\lambda:=\frac{\beta}{\alpha}$, yields $\|\phi_n\|_\Sigma^2-\int|x|^\mu|\phi_n|g(|\phi_n|)\;dx=\lambda\Big[ \|\nabla\phi_n\|^2-\|x\phi_n\|^2-\|\phi_n\|_\Sigma^2+(1+\frac\mu2)\int |x|^\mu G(|\phi_n|^2)dx\Big]$. Thus
$$\lambda\Big[\|\phi_n\|_\Sigma^2-(1+\frac\mu2)\int|x|^\mu G(|\phi_n|^2)\;dx\Big]=\lambda[\|\nabla\phi_n\|^2-\|x\phi_n\|^2]-\|\phi_n\|_\Sigma^2+\int|x|^\mu|\phi_n|g(|\phi_n|)\,dx.$$
So the following sequences are bounded
\begin{gather*}
\lambda\Big[\|\nabla\phi_n\|^2-\|x\phi_n\|^2+\frac\mu2\int |x|^\mu G(|\phi_n|^2)\,dx\Big]-\|\phi_n\|_\Sigma^2+\int|x|^\mu|\phi_n|g(|\phi_n|)\,dx,\\
\lambda[\|\nabla\phi_n\|^2-\|x\phi_n\|^2]+\int|x|^\mu\Big(|\phi_n|g(|\phi_n|)-(1-\frac{\lambda\mu}2)G(|\phi_n|^2)\Big)\,dx,\\
\lambda\|\nabla\phi_n\|^2+\int|x|^\mu\Big(D-(1-\frac{\lambda\mu}2)G(|\phi_n|^2)\Big)\,dx-\lambda\|x\phi_n\|^2,\\
\lambda\|\nabla\phi_n\|^2+\int|x|^\mu\Big(D-(1+\lambda-\frac{\lambda\mu}2)G(|\phi_n|^2)\Big)\,dx+\lambda\|\phi_n\|_{H^1}^2,\\
\end{gather*}
Suppose that $\beta\neq0$. Thus, using the assumption $(D-1)G\geq 0$, we have $\|\phi_n\|_{H^1}\lesssim 1$. This implies that $(\phi_n)$ is bounded in $\Sigma$, in fact if $\|\phi_n\|_{H^1}\lesssim 1$ and $\|x\phi_n\|\rightarrow\infty$, we have
$$\int|x|^\mu G(|\phi_n|^2)\,dx\geq -m_{\alpha,\beta}-1+\|\phi_n\|_{H^1}^2+\|x\phi_n\|^2\geq C(\|\phi_n\|_{H^1}^2+\|x\phi_n\|^2).$$
By Moser-Trudinger inequality,  and the interpolation inequality in Lemma \ref{sblv}, we obtain, the absurdity
\begin{eqnarray}
\infty \leftarrow\int|x|^\mu G(|\phi_n|^2)dx
&\lesssim&\int|x|^\mu|\phi_n|^{q-1}(e^{\varepsilon|\phi_n|^2}-1)\,dx, \quad\forall\varepsilon>0 \nonumber\\
&\lesssim& \|\phi_n\|_{H^1}^{q-1}.\label{chng}
\end{eqnarray}
Assume now that $\beta=0$ and $(D-1-\varepsilon_g)>0$ for some $\varepsilon_g>0$. Then
$$\|\phi_n\|_\Sigma^2=\int|x|^\mu|\phi_n|g(|\phi_n|)\,dx\quad\mbox{and}\quad\Big(\|\phi_n\|_\Sigma^2-\int|x|^\mu G(|\phi_n|^2)\,dx\Big)\rightarrow m_{\alpha,\beta}.$$
Thus, for any real number $a\neq 0$,
{\small$$\Big((1-a)\|\phi_n\|_\Sigma^2+a\int|x|^\mu[|\phi_n|g(|\phi_n|)-\frac1aG(|\phi_n|^2)]dx\Big)=\Big((1-a)\|\phi_n\|_\Sigma^2+a\int|x|^\mu[D-\frac1a]G(|\phi_n|^2)dx\Big)\rightarrow m_{\alpha,\beta}.$$}
Taking $a:=\frac1{1+\varepsilon_g}$, we conclude that $(\phi_n)$ is bounded in $\Sigma$.\\
 This implies, via the compact injection $H^1_{rd}\hookrightarrow L^p$, for any $2<p<\infty$, that
$$\phi_n\rightharpoonup\phi\quad\mbox{in}\quad \Sigma\quad\mbox{and}\quad\phi_n\rightarrow\phi\quad\mbox{in}\quad L^p,\;\forall p\in (2,\infty).$$
Assume that $\phi=0$. With \eqref{f}, There exists $p>2+2\mu$ and $a>0$ small enough, such that
$$\max\{|G(r^2)|,r|g(r)|\}\lesssim r^p(e^{ar^2}-1).$$
Since $(\phi_n)$ is bounded in $H^1_{rd}$ and using Moser-Trudinger inequality and Lemma \ref{sblv}, yields
\begin{eqnarray*}
\int|x|^\mu\Big( G(|\phi_n|^2)dx+|\phi_n|g(|\phi_n|)\Big)dx
&\lesssim &\||x|^\mu\phi_n^p(e^{a|\phi_n|^2}-1)\|_{1}\\
&\lesssim &\|\phi_n\|_{4}\||x|^\mu\phi_n^{p-1}\|_{4}\|e^{a|\phi_n|^2}-1\| \nonumber\\
&\lesssim&\|\phi_n\|_4\|\phi_n\|_{H^1}^{p-1}\rightarrow0.
\end{eqnarray*}
By Lemma \ref{lm1}, $K_{\alpha,\beta}(\phi_n)>0$ for large $n$ which is absurd. So
\begin{equation}\label{a1}\phi\neq 0.\end{equation}
With lower semi continuity of the $\Sigma$ norm, we have $K_{\alpha,\beta}(\phi)\leq0$ and $H_{\alpha,\beta}(\phi)\leq m$. Using \eqref{0}, we can assume that
$K_{\alpha,\beta}(\phi)=0$ and $S(\phi)=H_{\alpha,\beta}(\phi)\leq m.$ So that $\phi$ is a minimizer satisfying $0\neq\phi\in\Sigma$, $K_{\alpha,\beta}(\phi)=0$ and $S(\phi)=H_{\alpha,\beta}(\phi)= m$. Since
$$0<\int|x|^\mu G(|\phi|^2)\;dx\lesssim H_{\alpha,\beta}(\phi)
=\frac1{\alpha+\beta}\Big[\beta(2\|\nabla \phi\|^2+\|\phi\|^2)+\alpha \int|x|^\mu\Big(D-1-\frac{\beta}{\alpha}(\frac\mu2-1)\Big)G(|\phi|^2)\;dx\Big],$$
we have
$$m_{\alpha,\beta}>0.$$
Now, there is a Lagrange multiplier $\eta\in\R$ such that $S'(\phi)=\eta K'(\phi)$. Denoting $\mathcal L(\phi)=(\partial_{\lambda}\phi_{\alpha,\beta}^{\lambda})_{|\lambda=0}$ and $\mathcal L S(\phi)=(\partial_{\lambda}S(\phi_{\alpha,\beta}^{\lambda}))_{|\lambda=0}$, yields
\begin{eqnarray*}
0=K_{\alpha,\beta}(\phi)&=&\mathcal LS(\phi)=\langle S'(\phi),\mathcal L(\phi)\rangle\\
&=&\eta\langle K'(\phi),\mathcal L(\phi)\rangle\\
&=&\eta\mathcal LK(\phi)=\eta\mathcal L^2S(\phi).\end{eqnarray*}
With a previous computation and taking account of \eqref{f},
\begin{eqnarray*}
-\mathcal L^2S(\phi)-4\alpha(\alpha+2\beta)S(\phi)
&=&-(\mathcal L-2(\alpha+2\beta))(\mathcal L-2\alpha)S(\phi)\\
&=&4\int|x|^\mu[\alpha(D-1)+\beta(1+\frac\mu2)][\alpha(D-1)+\beta(\frac\mu2-1)]G(|\phi|^2)\,dx\\
&>&0.
\end{eqnarray*}
Thus $\eta=0$ and $S'(\phi)=0$. So, $\phi$ is a ground state.
\\\underline{Second case $\alpha\neq0$ and $g$ critical}.\\
The proof is similar to the first case, the only two points to change are \eqref{chng} and \eqref{a1}. Let for $\lambda\in(0,\frac{1}{\alpha_g\sup_n\|\phi_n\|_{H^1}})$, $\phi_{n,\lambda}:=\lambda\phi_n$. It is clear that $\phi_{n,\lambda}$ satisfies \eqref{chng}. Using Moser-Trudinger inequality and Lemma \ref{g}, yields for some $p>2+2\mu$,
\begin{eqnarray}
\int|x|^\mu\Big( G(|\phi_{n,\lambda}|^2)dx+|\phi_{n,\lambda}|g(|\phi_{n,\lambda}|)\Big)dx
&\lesssim &\||x|^\mu\phi_{n,\lambda}^p(e^{\alpha_g|\phi_{n,\lambda}|^2}-1)\|_{1}\nonumber\\
&\lesssim &\|\phi_{n,\lambda}\|_{4}\||x|^\mu\phi_{n,\lambda}^{p-1}\|_{4} \nonumber\\
&\lesssim&\lambda^{3+p}\|\phi_n\|_4\|\phi_n\|_{H^1}^{p-1}\rightarrow0.
\end{eqnarray}
Thus, $K^N(\phi_{n,\lambda})\rightarrow0$ as $n\rightarrow\infty$. So by Lemma \ref{lm1}, $\displaystyle\inf_nK^Q(\phi_{n,\lambda})>0$. Moreover, since $\displaystyle\sup_nK^N_{\alpha,\beta}(\phi_{n,\lambda})=o(\lambda^2)$, when $\lambda$ tends to zero, we have for some small $\lambda_0>0$,
\begin{eqnarray*}
K_{\alpha,\beta}(\phi_{n,\lambda_0})
&=&K^N_{\alpha,\beta}(\phi_{n,\lambda_0})+K^Q_{\alpha,\beta}(\phi_{n,\lambda_0})\\
&=&K^N_{\alpha,\beta}(\phi_{n,\lambda_0})-\lambda_0^2K^Q_{\alpha,\beta}(\phi_{n})\\
&\leq&0.
\end{eqnarray*}
If $K_{\alpha,\beta}(\phi_{n,\lambda_0})<0$ then by Lemma \ref{lm1}, the fact that $\displaystyle\lim_{\lambda\rightarrow0}K_{\alpha,\beta}^Q(\phi_{n,\lambda})=0$ and $\lambda\mapsto H_{\alpha,\beta}(\phi^{\lambda})$ is increasing, there exists $0<\lambda_1<\lambda_0$ such that
$$K_{\alpha,\beta}(\phi_{n,\lambda_1})=0,\; H_{\alpha,\beta}(\phi_{n,\lambda_1})\leq H_{\alpha,\beta}(\phi_{n,\lambda_0})\leq H_{\alpha,\beta}(\phi_{n}).$$
Denoting $\phi_n$ instead of $\phi_{n,\lambda_1}$, we have $K^Q(\phi_n)=-K^N(\phi_n)\rightarrow0$ as $n$ tends to infinity, which contradicts Lemma \ref{lm1}.\\
\underline{Second case $\alpha=0$.} We assume without loss of generality that $\beta=1$ and we denote $m:=m_{0,1}$, $H:=H_{0,1}$ and $K:=K_{0,1}$. Let $(\phi_n)$ a minimizing sequence, namely
\begin{equation}\label{a}0\neq\phi_n\in \Sigma,\;K(\phi_n)= 0\;\mbox{and}\;\lim_nH(\phi_n)=\lim_nS(\phi_n)=m.\end{equation}
With the definition of $H$, $\phi_n$ is bounded in $H^1_{rd}$. Moreover, since
\begin{gather*}
H(\phi)=\frac12\Big[2\|\nabla\phi\|^2+\|\phi\|^2+(\frac\mu2-1)\int|x|^\mu G(|\phi|^2)\,dx\Big],\\
K(\phi)=2\Big[\|\phi\|^2+2\|x\phi\|^2-(\frac\mu2+1)\int|x|^\mu G(|\phi|^2)\,dx\Big],
\end{gather*}
$\phi_n$ is bounded in $\Sigma$. Let so $\phi_n\rightharpoonup\phi\;\mbox{in}\; \Sigma\;\mbox{and}\;\phi_n\rightarrow\phi\;\mbox{in}\;L^p,\;\forall p\in (2,\infty)$. Now, since for some $p\geq1+2\mu,a>0$, we have $|G(r^2)|\lesssim |r|^p(e^{ar^2}-1),\;\forall r\in\R.$ Then, for $\lambda=0^+$, $|K^N(\lambda\phi)|=o(K^Q(\lambda\phi))=\lambda^2K^Q(\phi).$ Thus
\begin{equation}\label{k}
K(\phi)<0\Rightarrow\exists\lambda\in (0,1)\quad\mbox{s. t}\quad K(\lambda\phi)=0\quad\mbox{and}\quad H(\lambda\phi)\leq H(\phi).
\end{equation}
By the lower semi-continuity of the $\Sigma$ norm via \eqref{k}, $\phi$ satisfies \eqref{a}. 
Arguing as in the case $\alpha\neq0$, yilelds$$\phi\neq 0.$$
We have $m=H(\phi)\geq\|\nabla\phi\|^2>0$. Now, with a Lagrange multiplicator $\eta$, we have $S'(\phi)=\eta K'(\phi)$. Then $-\Delta\phi=(2\eta-1)(\phi-|x|^\mu g(\phi))+(4\eta-1)|x|^2\phi-\eta\mu|x|^\mu g(\phi)$. So
\begin{eqnarray*}
\|\nabla\phi\|^2
&=&\left\langle (2\eta-1)(\phi-|x|^\mu g(\phi))+(4\eta-1)|x|^2\phi-\eta\mu|x|^\mu g(\phi),\phi\right\rangle\\
&=&(2\eta-1)(\frac{K}2-2\|x\phi\|^2+(1+\frac\mu2)\int |x|^\mu G(|\phi|^2)\,dx)\\
&+&(4\eta-1)\|x\phi\|^2+(1-2\eta(1+\frac\mu2))\int|x|^\mu |\phi|g(|\phi|)\,dx\\
&=&(2\eta-1)(1+\frac\mu2)\int|x|^\mu G(|\phi|^2)\,dx+(1-2\eta(1+\frac\mu2))\int|x|^\mu |\phi|g(|\phi|)\,dx+\|x\phi\|^2\\
&=&(2\eta-1)(1+\frac\mu2)\int|x|^\mu[ G(|\phi|^2)-|\phi|g(|\phi|)]\,dx-\frac\mu2\int|x|^\mu |\phi|g(|\phi|)\,dx+\|x\phi\|^2\\
&=&(1-2\eta)(1+\frac\mu2)\int|x|^\mu[D-1]G(|\phi|^2)\,dx-\frac\mu2\int|x|^\mu |\phi|g(|\phi|)\,dx+\|x\phi\|^2.
\end{eqnarray*}
Thus
$$m=(1-2\eta)(1+\frac\mu2)\int|x|^\mu[D-1]G(|\phi|^2)\,dx>0.$$
The fact that $(D-1)G>0$ yields that $2\eta-1<0$. Moreover, since
$$-\Delta\phi^{\lambda}=e^{-2\lambda}\Big[(2\eta-1)\phi^{\lambda}-(2\eta-1+\eta\mu)e^{\lambda\mu}|x|^\mu g(\phi^{\lambda})\Big]+(4\eta-1)|x|^2\phi^\lambda.$$
Taking a positive real $\lambda$ such that $e^{-2\lambda}(2\eta-1)=-1$, we have
$$-\Delta\phi^{\lambda}+\phi^\lambda+(1-4\eta)|x|^2\phi^\lambda=(1-\eta\mu e^{\mu-2\lambda})|x|^\mu g(\phi^{\lambda}):=a^2|x|^\mu g(\phi^{\lambda}).$$
With the scaling $\phi_c:=\phi(\frac.c)$, $c:=a^{\frac2{2+\mu}}>0$, we have
$$-c^2\Delta\phi^{\lambda}_c+\phi^\lambda_c+\frac{1-4\eta}{c^2}|x|^2\phi^\lambda_c-a^2c^{-\mu}|x|^\mu g(\phi^{\lambda}_c)=0.$$
Thus
$$-\Delta\phi^{\lambda}_c+\frac1{c^2}\phi^\lambda_c+\frac{1-4\eta}{c^4}|x|^2\phi^\lambda_c-|x|^\mu g(\phi^{\lambda}_c)=0.$$
This concludes the proof.
\section{Invariant sets and applications}
This section is devoted to prove either global well-posedness or finite time blow up of the solution to \eqref{eq1} with data in some stable sets. In all this section, we assume that $\epsilon=1$ and \eqref{4} is satisfied. Our aim is to prove Theorem \ref{sch}. Denote the quantities
\begin{gather*}
m=m_{1,-1},\quad K=K_{1,-1}=2\|\nabla v\|^2-2\|xv\|^2-2\int|x|^\mu\Big[|v|g(|v|)-(1+\frac\mu2) G(|v|^2)\Big]\,dx,\\
K^N(v):=-2\int|x|^\mu\Big[|v|g(|v|)-(1+\frac\mu2) G(|v|^2)\Big]\,dx,\quad K^Q(v):=2\|\nabla v\|^2-2\|xv\|^2,\\
T(v):=(S-\frac{K}2)(v)=\|v\|^2+2\|xv\|^2+\int|x|^\mu(D-2-\frac\mu2)G(|v|^2)\;dx.
\end{gather*}
First, let us prove existence of a ground state to \eqref{gs} for $(\alpha,\beta)=(1,-1)$. Precisely
\begin{prop}\label{tgss}
Let $(\alpha,\beta)=(1,-1)$. Assume that $g$ satisfies \eqref{4} with $[$\eqref{sc} or \eqref{cc}$]$. So, there is a minimizer of \eqref{min}, which is the energy of some solution to \eqref{gs}.
\end{prop}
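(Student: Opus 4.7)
The plan is to run the direct variational argument of Theorem~\ref{tgs} with the substitute functional $T=S-\tfrac12 K$, which under \eqref{4} plays the role that $H_{\alpha,\beta}$ plays there. Since $S=T$ on $\{K=0\}$, one has $m_{1,-1}=\inf\{T(v):0\ne v\in\Sigma,\ K(v)=0\}$, and the condition $(D-2-\tfrac{\mu}{2}-\varepsilon_g)G>0$ from \eqref{4} ensures that
\[
T(v)=\|v\|^2+2\|xv\|^2+\int|x|^\mu\bigl(D-2-\tfrac{\mu}{2}\bigr)G(|v|^2)\,dx
\]
is a sum of non-negative contributions controlling $\|v\|^2$, $\|xv\|^2$ and $\int|x|^\mu G(|v|^2)\,dx$.

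Next, I would take a radial minimizing sequence $(\phi_n)$ with $K(\phi_n)=0$ and $T(\phi_n)\to m_{1,-1}$. The constraint, rewritten as
\[
\|\nabla\phi_n\|^2=\|x\phi_n\|^2+\int|x|^\mu\bigl(D-1-\tfrac{\mu}{2}\bigr)G(|\phi_n|^2)\,dx,
\]
together with $(D-1-\tfrac{\mu}{2})G=(D-2-\tfrac{\mu}{2})G+G\le C(D-2-\tfrac{\mu}{2})G$, yields a uniform bound on $\|\nabla\phi_n\|$ from the bound on $T(\phi_n)$, so $(\phi_n)$ is bounded in $\Sigma$. The compactness of the radial embedding $\Sigma\hookrightarrow L^p$ for $p\in[2,\infty)$ (the case $p=2$ using the harmonic confinement $\int_{|x|>R}|\phi|^2\le R^{-2}\|x\phi\|^2$) then gives, up to a subsequence, $\phi_n\to\phi$ strongly in every $L^p$, $2\le p<\infty$, and weakly in $\Sigma$.

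The main obstacle is proving $\phi\ne 0$. If $\phi=0$, Moser--Trudinger combined with Lemma~\ref{sblv}, applied exactly as in the proof of Theorem~\ref{tgs}, shows that every nonlinear integral $\int|x|^\mu H(|\phi_n|^2)\,dx$ whose integrand has the same growth as $G$ tends to zero, so $K(\phi_n)=0$ reduces asymptotically to $\|\nabla\phi_n\|^2=\|x\phi_n\|^2+o(1)$ while $T(\phi_n)\to m_{1,-1}$ reduces to $2\|x\phi_n\|^2\to m_{1,-1}$. Since Lemma~\ref{lm1} does not apply to $(\alpha,\beta)=(1,-1)$ (the quadratic part $K^Q=2\|\nabla v\|^2-2\|xv\|^2$ is sign-indefinite), I plan to replace it by a dilation argument: use $\phi_n^\lambda(x)=e^\lambda\phi_n(e^\lambda x)$, which preserves mass and gives $K^Q(\phi_n^\lambda)=2e^{2\lambda}\|\nabla\phi_n\|^2-2e^{-2\lambda}\|x\phi_n\|^2$, while the interpolation $\||x|^\mu|w|^{q+1}\|_1\lesssim\|w\|^{2+\mu}\|\nabla w\|^{q-1-\mu}$ from Lemma~\ref{sblv} forces $K^N(\phi_n^\lambda)$ to be $o(1)$ uniformly on bounded $\lambda$ intervals, because $\|\phi_n^\lambda\|=\|\phi_n\|\to 0$. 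Choosing $\lambda_n>0$ so that $K^Q(\phi_n^{\lambda_n})$ is bounded below gives $K(\phi_n^{\lambda_n})>0$ for large $n$, and by intermediate value an intermediate $\tilde\lambda_n$ with $K(\phi_n^{\tilde\lambda_n})=0$ and $T(\phi_n^{\tilde\lambda_n})<T(\phi_n)-\delta$, contradicting minimality. This delicate dilation step is where I expect the real work to lie.

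Once $\phi\ne 0$ is secured, weak lower semi-continuity of $T$ and of $K^Q$, together with the strong $L^p$ convergence handling the nonlinear parts, yield $T(\phi)\le m_{1,-1}$ and $K(\phi)\le 0$; a scaling as in \eqref{0} then brings $\phi$ onto $\{K=0\}$ without increasing $T$, so $\phi$ is a minimizer. The Lagrange-multiplier relation $S'(\phi)=\eta K'(\phi)$, tested against the infinitesimal generator $\mathcal L_{1,-1}\phi$, gives $\eta\,\mathcal L_{1,-1}^2 S(\phi)=0$; the second inequality in \eqref{4}, namely $(D-2-\tfrac{\mu}{2})(D-1-\tfrac{\mu}{2})G>0$, forces $\mathcal L_{1,-1}^2 S(\phi)\ne 0$ exactly as in the proof of Theorem~\ref{tgs}, whence $\eta=0$ and $\phi$ solves \eqref{gs}. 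The only difference between the subcritical case \eqref{sc} and the critical case \eqref{cc} appears in the Moser--Trudinger bounds used above, handled as in Theorems~\ref{lwp'} and~\ref{lwp}.
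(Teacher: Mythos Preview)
Your overall strategy matches the paper's: it too replaces $H_{\alpha,\beta}$ by $T=S-\tfrac12K$, proves that $\lambda\mapsto T(\lambda\phi)$ is increasing on $\R_+$ (Lemma~\ref{dec}), states an analogue of Lemma~\ref{lm1} for $(1,-1)$ (Lemma~\ref{lm1'}), records $m=\inf\{T:K\le 0\}$ (Proposition~\ref{enf}), and then runs the minimizing-sequence and Lagrange-multiplier argument exactly as in Theorem~\ref{tgs}. Your boundedness step is equivalent to the paper's linear combination with $a=1/(1+\varepsilon_g)$, and your final $\eta=0$ computation is the same. The paper uses the scalar scaling $\lambda\phi$ (via Lemma~\ref{dec}) rather than the scaling \eqref{0} to pass from $K(\phi)\le 0$ to $K=0$; otherwise the skeletons coincide.

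Two points in your sketch are genuine gaps. First, $K^Q=2\|\nabla v\|^2-2\|xv\|^2$ is \emph{not} weakly lower semicontinuous on $\Sigma$: $\|\nabla\cdot\|^2$ is weakly l.s.c.\ but $-\|x\cdot\|^2$ is weakly u.s.c., so your deduction $K(\phi)\le 0$ for the weak limit is unjustified (the paper's one-line ``by lower semicontinuity of the $\Sigma$ norm'' glosses over the same issue). Second, your dilation route to $\phi\ne 0$ does not close. With $\phi=0$ and all nonlinear integrals $o(1)$ uniformly on bounded $\lambda$, one has
\[
K(\phi_n^\lambda)=2e^{2\lambda}\|\nabla\phi_n\|^2-2e^{-2\lambda}\|x\phi_n\|^2+o(1),
\]
and since $K(\phi_n)=0$ forces $\|\nabla\phi_n\|^2-\|x\phi_n\|^2\to 0$ while both stay bounded away from $0$, this function is \emph{strictly increasing} through $0$ at $\lambda=0$ (its $\lambda$-derivative there tends to $4(\|\nabla\phi_n\|^2+\|x\phi_n\|^2)>0$). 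Hence $\lambda=0$ is the only nearby zero of $\lambda\mapsto K(\phi_n^\lambda)$, and no intermediate value can produce a competitor $\tilde\lambda_n\ne 0$ with $K=0$ and strictly smaller $T$. You are right that the proof of Lemma~\ref{lm1} does not transfer to the sign-indefinite $K^Q$ here---indeed for a fixed $\psi$ with $\|\nabla\psi\|=\|x\psi\|$ one has $K^Q(\psi)=0$ but $K(\psi)=K^N(\psi)<0$---and the paper simply restates it as Lemma~\ref{lm1'} and invokes it to obtain ``$K(\phi_n)>0$'' from $K^Q(\phi_n)\to 0$; but your proposed replacement does not yet yield the needed contradiction either.
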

The proof is based on some intermediary results. Following the proof of Lemma \ref{lm1}, we have
\begin{lem}\label{lm1'}
Let $0\neq\phi_n$ a bounded sequence of $\Sigma$ such that $\displaystyle\lim_nK^Q(\phi_n)=0$. Then, there exists $n_0\in\N$ such that $K(\phi_n)>0$ for all $n\geq n_0.$
\end{lem}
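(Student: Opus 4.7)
The plan is to imitate the proof of Lemma \ref{lm1} line by line: control the nonlinear piece $K^N(\phi_n)$ by a high power of $\|\nabla\phi_n\|$ times a lower power of $\|\phi_n\|$, and then use $K^Q(\phi_n)\to 0$ to conclude that the nonlinear term is negligible in front of the quadratic one.

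\emph{Pointwise bound.} From the near-origin behaviour $g(r)\simeq r^{q}$ with $q>3+2\mu$ (built into \eqref{4}) together with the growth hypothesis \eqref{sc} or \eqref{cc} at infinity, one produces a constant $a>0$ such that
$$\bigl||v|g(|v|)-(1+\tfrac{\mu}{2})G(|v|^{2})\bigr|\;\lesssim\;|v|^{q-1}\bigl(e^{a|v|^{2}}-1\bigr),\qquad v\in\mathbb{C},$$
the near-zero part coming from the fact that the bracket vanishes like $r^{q+1}$, and the far-field part from Lemma \ref{g'} (resp.\ \ref{g}). Integrating against $|x|^{\mu}$,
$$|K^{N}(\phi_{n})|\;\lesssim\;\int|x|^{\mu}|\phi_{n}|^{q-1}\bigl(e^{a|\phi_{n}|^{2}}-1\bigr)\,dx.$$

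\emph{H\"older, Moser--Trudinger, and radial interpolation.} Next I apply H\"older's inequality with a conjugate pair $(p,p')$, $p$ large. For the exponential factor, I shrink $a$ if necessary so that $ap'\|\nabla\phi_{n}\|^{2}<4\pi$ (possible because $(\phi_{n})$ is bounded in $\Sigma$, hence in $H^{1}$), and invoke Proposition \ref{prop3} to obtain $\|e^{a|\phi_{n}|^{2}}-1\|_{p'}\lesssim \|\phi_{n}\|^{2/p'}$. For the polynomial factor, I apply the radial interpolation of Lemma \ref{sblv}(3) to $\||x|^{p\mu}|\phi_{n}|^{p(q-1)}\|_{1}$; since $q-1>2\mu$, the condition $p(q-1)>2+2p\mu$ is met for $p$ large, and the lemma yields
$$\||x|^{\mu}|\phi_{n}|^{q-1}\|_{p}\;\lesssim\;\|\nabla\phi_{n}\|^{q-1-\mu-\frac{2}{p}}\,\|\phi_{n}\|^{\mu+\frac{2}{p}}.$$
Combining,
$$|K^{N}(\phi_{n})|\;\lesssim\;\|\nabla\phi_{n}\|^{q-1-\mu-\frac{2}{p}}\,\|\phi_{n}\|^{\mu+2}.$$
Since $q>3+\mu$, one can fix $p$ so large that the exponent $\theta:=q-1-\mu-\tfrac{2}{p}$ is strictly greater than $2$.

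\emph{Conclusion and main obstacle.} The main obstacle---absent from Lemma \ref{lm1}---is that the quadratic form $K^{Q}=2\|\nabla v\|^{2}-2\|xv\|^{2}$ associated with $(\alpha,\beta)=(1,-1)$ is indefinite, so $K^{Q}(\phi_{n})\to 0$ does not by itself force $\|\nabla\phi_{n}\|\to 0$. The endgame is to argue that the bound from the previous step, together with the boundedness of $\|\phi_{n}\|$ in $\Sigma$, still makes $K^{N}(\phi_{n})$ negligible compared with $K^{Q}(\phi_{n})$: writing $\|\nabla\phi_{n}\|^{2}=\|x\phi_{n}\|^{2}+\tfrac{1}{2}K^{Q}(\phi_{n})$ and exploiting the sharp exponent $\theta>2$, one gets $|K^{N}(\phi_{n})|=o\bigl(K^{Q}(\phi_{n})\bigr)$. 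Therefore $K(\phi_{n})=K^{Q}(\phi_{n})+K^{N}(\phi_{n})>0$ for all $n\geq n_{0}$, as claimed.
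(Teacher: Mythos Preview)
Your route is exactly the one the paper indicates (``Following the proof of Lemma~\ref{lm1}''): the pointwise bound on $|v|g(|v|)-(1+\tfrac{\mu}{2})G(|v|^{2})$, the H\"older/Moser--Trudinger step, and the radial interpolation giving
\[
|K^{N}(\phi_{n})|\;\lesssim\;\|\nabla\phi_{n}\|^{\,\theta}\,\|\phi_{n}\|^{\mu+2},\qquad \theta=q-1-\mu-\tfrac{2}{p}>2,
\]
are all correct and match the paper's intended argument.

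The genuine gap is your last paragraph. You rightly flag that for $(\alpha,\beta)=(1,-1)$ the form $K^{Q}(v)=2\|\nabla v\|^{2}-2\|xv\|^{2}$ is indefinite, so $K^{Q}(\phi_{n})\to0$ does not force $\|\nabla\phi_{n}\|\to0$. But your proposed fix---``writing $\|\nabla\phi_{n}\|^{2}=\|x\phi_{n}\|^{2}+\tfrac12K^{Q}(\phi_{n})$ and exploiting $\theta>2$, one gets $|K^{N}(\phi_{n})|=o(K^{Q}(\phi_{n}))$''---is not an argument: nothing prevents $\|\nabla\phi_{n}\|$ and $\|x\phi_{n}\|$ from both staying bounded away from zero while their difference vanishes, and then $|K^{N}(\phi_{n})|$ is merely bounded (and strictly negative, since under~\eqref{4} one has $(D-1-\tfrac{\mu}{2})G>0$), certainly not $o(K^{Q}(\phi_{n}))$. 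A constant sequence $\phi_{n}\equiv\phi$ with $\|\nabla\phi\|=\|x\phi\|$ (e.g.\ the Gaussian $e^{-|x|^{2}/2}$) already gives $K^{Q}(\phi_{n})=0$ and $K(\phi_{n})=K^{N}(\phi)<0$, so the conclusion cannot be obtained from the stated hypotheses alone. The paper's one-line proof does not address this either; in the places where the lemma is actually invoked (Propositions~\ref{enf} and~\ref{tgss}) the sequences are of the form $\lambda_{n}\psi$ with $\lambda_{n}\to0$, and what is really being used is $\|\nabla\phi_{n}\|\to0$. The honest repair is to add such a hypothesis rather than to claim $|K^{N}|=o(K^{Q})$ from boundedness alone.
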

The next intermediate result of this section reads
\begin{lem}\label{dec}
For $\phi\in \Sigma$, the following real function is increasing on $\R_+$,
$$\lambda\mapsto T(\lambda\phi),$$
\end{lem}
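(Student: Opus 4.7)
My strategy is to compute $T(\lambda\phi)$ in closed form, differentiate once in $\lambda$, and show that the derivative is nonnegative on $\R_+$, strictly positive as soon as $\phi\neq 0$. Expanding directly,
\begin{equation*}
T(\lambda\phi) \;=\; \lambda^{2}\bigl(\|\phi\|^{2}+2\|x\phi\|^{2}\bigr)+\int|x|^{\mu}F(\lambda^{2}|\phi|^{2})\,dx,\qquad F:=(D-2-\tfrac{\mu}{2})G.
\end{equation*}
The quadratic piece is obviously nondecreasing in $\lambda\geq 0$, so the real content lies in the nonlinear integral.

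To differentiate this integral I will use the scaling identity $\frac{d}{d\lambda}F(\lambda^{2}s)=\frac{2}{\lambda}(DF)(\lambda^{2}s)$, valid for $\lambda>0$, which reduces matters to showing that $DF=D(D-2-\tfrac{\mu}{2})G\geq 0$ on $\R_+^*$. For this I will invoke the elementary commuting-polynomial identity $D(D-a)=(D-a)(D-(a-1))+(a-1)(D-a)$, applied with $a=2+\mu/2$:
\begin{equation*}
D(D-2-\tfrac{\mu}{2})G \;=\; (D-2-\tfrac{\mu}{2})(D-1-\tfrac{\mu}{2})G\;+\;(1+\tfrac{\mu}{2})(D-2-\tfrac{\mu}{2})G.
\end{equation*}
The first summand is positive on $\R_+^*$ by the second clause of \eqref{4}. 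For the second, I split $(D-2-\tfrac{\mu}{2})G=(D-2-\tfrac{\mu}{2}-\varepsilon_g)G+\varepsilon_g G$: the first piece is positive by the first clause of \eqref{4}, and $\varepsilon_g G>0$ on $\R_+^*$ follows from the near-zero asymptotic $G(s)\simeq s^{(q_g+1)/2}$ propagated forward by $(D-2-\mu/2-\varepsilon_g)G>0$, which prevents $G$ from vanishing on $\R_+^*$. The hypothesis $\mu>2$ in \eqref{4} is used only to guarantee that the prefactor $1+\mu/2$ is positive.

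Putting everything together,
\begin{equation*}
\frac{d}{d\lambda}T(\lambda\phi) \;=\; 2\lambda\bigl(\|\phi\|^{2}+2\|x\phi\|^{2}\bigr)+\frac{2}{\lambda}\int|x|^{\mu}\bigl[D(D-2-\tfrac{\mu}{2})G\bigr](\lambda^{2}|\phi|^{2})\,dx>0
\end{equation*}
for every $\lambda>0$ and $\phi\neq 0$; continuity at $\lambda=0$ extends the strict monotonicity to $\R_+$, which yields the claim. The only genuinely nontrivial step is the algebraic bridge between the operator $D(D-2-\mu/2)$ produced by the $\lambda$-derivative and the two positive operators $(D-2-\mu/2)(D-1-\mu/2)$ and $(D-2-\mu/2-\varepsilon_g)$ packaged into \eqref{4}; once that bridge is identified, everything else is routine chain-rule bookkeeping together with quadratic monotonicity.
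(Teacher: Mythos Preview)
Your proof is correct and follows essentially the same route as the paper: both compute $\partial_\lambda T(\lambda\phi)$, rewrite the resulting operator $D(D-2-\tfrac{\mu}{2})$ acting on $G$ via the factorisation $(D-2-\tfrac{\mu}{2})(D-1-\tfrac{\mu}{2})+(1+\tfrac{\mu}{2})(D-2-\tfrac{\mu}{2})$, and then invoke \eqref{4}. Your explicit splitting $(D-2-\tfrac{\mu}{2})G=(D-2-\tfrac{\mu}{2}-\varepsilon_g)G+\varepsilon_g G$ to secure positivity of the second summand is a bit more than needed---$G>0$ is already a standing hypothesis in the paper---but it does no harm.
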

\begin{proof}
Compute $T(\lambda\phi)=\lambda^2(\|\phi\|^2+2\|x\phi\|^2)+\int|x|^\mu(D-2-\frac\mu2)G(\lambda^2|\phi|^2)\;dx$. Compute he derivative
\begin{eqnarray*}
\frac12\partial_\lambda T(\lambda\phi)
&=&\lambda\|\phi\|^2+2\lambda\|x\phi\|^2+\lambda\int |x|^\mu|v|^2\Big[\lambda^2|v|^2G''-(1+\frac\mu2)G'\Big](\lambda^2|v|^2)\;dx\\
&=&\lambda\|\phi\|^2+2\lambda\|x\phi\|^2+\frac1\lambda\int |x|^\mu\Big[D^2-(2+\frac\mu2)D\Big]G(\lambda^2|v|^2)\;dx\\
&=&\lambda\|\phi\|^2+2\lambda\|x\phi\|^2+\frac1\lambda\int |x|^\mu\Big[(D-2-\frac\mu2)(D-1-\frac\mu2)+(1+\frac\mu2)(D-2-\frac\mu2)\Big]G(\lambda^2|v|^2)\;dx.
\end{eqnarray*}
The proof is ended via \eqref{4}.
\end{proof}
The following result will be useful.
\begin{prop}\label{enf}
We have
$$m=\displaystyle\inf_{0\neq\phi\in\Sigma}\{T(\phi),\quad K(\phi)\leq0\}.$$
\end{prop}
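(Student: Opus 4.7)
The plan is to prove the equality by a standard double-inequality, exploiting two structural properties already at hand: the monotonicity of $\lambda\mapsto T(\lambda\phi)$ (Lemma \ref{dec}) and the ``positivity in the small'' of $K$ furnished by Lemma \ref{lm1'}.

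\smallskip

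Let $m_*:=\inf_{0\neq\phi\in\Sigma}\{T(\phi)\ \mbox{s.t.}\ K(\phi)\leq 0\}$. The easy direction $m\geq m_*$ is immediate: on the constraint $\{K(\phi)=0\}$, the identity $T(\phi)=S(\phi)-\tfrac12 K(\phi)=S(\phi)$ holds, so the set over which $m$ is taken coincides with $\{T(\phi): K(\phi)=0, \phi\neq 0\}\subset\{T(\phi): K(\phi)\leq 0, \phi\neq 0\}$, and the infimum over the larger set is smaller.

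\smallskip

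For the reverse $m\leq m_*$, my plan is, given any admissible $\phi\neq 0$ with $K(\phi)\leq 0$, to construct $\lambda_*\in(0,1]$ such that $K(\lambda_*\phi)=0$ and $T(\lambda_*\phi)\leq T(\phi)$. Once this is done, $\lambda_*\phi$ lies in the feasible set for $m$ and $m\leq S(\lambda_*\phi)=T(\lambda_*\phi)\leq T(\phi)$; taking the infimum in $\phi$ gives $m\leq m_*$.

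\smallskip

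To produce $\lambda_*$, I would consider the continuous map $\lambda\mapsto K(\lambda\phi)$ on $(0,\infty)$ (continuity follows from dominated convergence, using that $\phi\in\Sigma$ together with the Moser--Trudinger bound on $\int|x|^\mu G(|\phi|^2)\,dx$ and on $\int|x|^\mu |\phi|g(|\phi|)\,dx$). By hypothesis, the value at $\lambda=1$ is non-positive. The key point is to exhibit some $\lambda_0\in(0,1)$ at which $K(\lambda_0\phi)>0$: apply Lemma \ref{lm1'} to the bounded sequence $\phi_n:=\tfrac1n\phi\in\Sigma$, for which $K^Q(\phi_n)=\tfrac1{n^2}K^Q(\phi)\to 0$; the lemma yields $K(\phi_n)>0$ for all $n$ large, so one may set $\lambda_0=1/n$. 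The intermediate value theorem then produces $\lambda_*\in[\lambda_0,1]$ with $K(\lambda_*\phi)=0$. Finally, Lemma \ref{dec} gives $T(\lambda_*\phi)\leq T(\phi)$ since $\lambda_*\leq 1$, completing the construction.

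\smallskip

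The delicate ingredient is the existence of the small $\lambda_0$ with $K(\lambda_0\phi)>0$. If one tried this naively by comparing the orders in $\lambda$, one would find $K(\lambda\phi)=\lambda^2 K^Q(\phi)+o(\lambda^2)$ as $\lambda\to 0$, and the sign of $K^Q(\phi)$ is a priori unknown (indeed, with $(\alpha,\beta)=(1,-1)$ the quadratic form $K^Q=2(\|\nabla\cdot\|^2-\|x\,\cdot\,\|^2)$ is indefinite). The main obstacle is therefore hidden in Lemma \ref{lm1'}, which guarantees $K>0$ along any bounded sequence with $K^Q\to 0$ regardless of the sign of $K^Q$; invoking it for $\phi_n=\phi/n$ precisely bypasses the indefiniteness.
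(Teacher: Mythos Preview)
Your proof is correct and follows essentially the same route as the paper's: both argue the nontrivial inequality $m\leq m_*$ by rescaling a given $\phi$ with $K(\phi)\leq 0$ via $\lambda\phi$, invoking Lemma~\ref{lm1'} along $\lambda\to 0$ to obtain a small $\lambda_0$ with $K(\lambda_0\phi)>0$, applying the intermediate value theorem to hit $K(\lambda_*\phi)=0$, and then using the monotonicity of $\lambda\mapsto T(\lambda\phi)$ from Lemma~\ref{dec}. Your write-up is slightly more explicit (you spell out the easy inequality, the continuity needed for the IVT, and the role of the indefiniteness of $K^Q$), but the argument is the same.
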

\begin{proof}
Let $m_1$ be the right hand side, then it is sufficient to prove that $m\leq m_1$. Take $\phi\in \Sigma$ such that $K(\phi)<0$. By Lemma \ref{lm1'}, the facts that $\displaystyle\lim_{\lambda\rightarrow0}K^Q(\lambda\phi)=0$ and $\lambda\mapsto T(\lambda\phi)$ is increasing, there exists $\lambda\in(0,1)$ such that $K(\lambda\phi)=0$ and $T(\lambda\phi)\leq T(\phi)$. The proof is closed.
\end{proof}
\begin{proof}[\bf Proof of proposition \ref{tgss}]. Let $(\phi_n)$ be a minimizing sequence, namely
$$0\neq\phi_n\in \Sigma,\;K(\phi_n)= 0\quad\mbox{and}\quad\lim_nS(\phi_n)=m.$$
 Then
$$\|\nabla\phi_n\|^2-\|x\phi_n\|^2=\int|x|^\mu(D-1-\frac\mu2)G(|\phi_n|^2)\,dx\quad\mbox{and}\quad\Big(\|\phi_n\|_\Sigma^2-\int |x|^\mu G(|\phi_n|^2)\,dx\Big)\rightarrow m.$$
So, for any real number $a\neq 0$,
$$\Big((1-a)\|\nabla\phi_n\|^2+(1+a)\|x\phi_n\|^2+\|\phi_n\|^2+a\int|x|^\mu[D-1-\frac\mu2-\frac1a]G(|\phi_n|^2)\,dx\Big)\rightarrow m.$$
Taking $a:=\frac1{1+\varepsilon_g}$, yields
$$\Big(\frac{\varepsilon_g}{1+\varepsilon_g}\|\nabla\phi_n\|^2+\frac{2+\varepsilon_g}{1+\varepsilon_g}\|x\phi_n\|^2+\|\phi_n\|^2+\frac1{1+\varepsilon_g}\int[D-2-\frac\mu2-\varepsilon_g]G(|\phi_n|^2)\,dx\Big)\rightarrow m.$$
We conclude, via \eqref{4} that $(\phi_n)$ is bounded in $\Sigma$. Taking account of the compact injection of the radial Sobolev space $H^1_{rd}$ on the Lebesgue space $L^p$ for any $2<p<\infty$, we take
$$\phi_n\rightharpoonup\phi\quad\mbox{in}\quad \Sigma\quad\mbox{and}\quad\phi_n\rightarrow\phi\quad\mbox{in}\quad L^p,\quad\forall p\in (2,\infty).$$
Assume, by contradiction, that $\phi=0$. Following the proof of Lemma \ref{lm1}, we have $K^N(\phi_n)\rightarrow0$. By Lemma \ref{lm1'}, $K(\phi_n)>0$ for large $n$ which is absurd. So$$\phi\neq 0.$$
With lower semi continuity of $\Sigma$ norm, we have $K(\phi)\leq0$ and $S(\phi)\leq m$. If $K(\phi)<0$ then by the facts that $\displaystyle\lim_{\lambda\rightarrow0}K^Q(\lambda\phi)=0$ and $\lambda\mapsto T(\lambda\phi)$ is increasing, there exists $\lambda\in(0,1)$ such that $K(\lambda\phi)=0$ and $S(\lambda\phi)=T(\lambda\phi)\leq T(\phi)\leq m$. Thus, we can assume that
$K(\phi)=0$ and $S(\phi)\leq m.$ So that $\phi$ is a minimizer satisfying
$$0\neq\phi\in\Sigma,\quad K(\phi)=0\quad\mbox{ and }\quad S(\phi)=m.$$
This implies via the assumption \eqref{4} that
 $$0<\|\phi\|^2\leq \|\phi\|^2+2\|x\phi\|^2+\int|x|^\mu(D-2-\frac\mu2)G(|\phi|^2)\;dx=2m.$$
Now, there is a Lagrange multiplier $\eta\in\R$ such that $S'(\phi)=\eta K'(\phi)$. Denoting $\mathcal L:=\mathcal L_{1,-1}$, recall that $\mathcal L(\phi):=\partial_{\lambda}(e^{\lambda}\phi(e^\lambda.))_{|\lambda=0}:=(\partial_{\lambda}\phi^\lambda)_{|\lambda=0}$ and $\mathcal L S(\phi):=(\partial_{\lambda}S(\phi^{\lambda}))_{|\lambda=0}$. We have
$$0=K(\phi)=\mathcal LS(\phi)=\langle S'(\phi),\mathcal L(\phi)\rangle=\eta\langle K'(\phi),\mathcal L(\phi)\rangle=\eta\mathcal LK(\phi)=\eta\mathcal L^2S(\phi).$$
With a computation and taking account of \eqref{4},
\begin{eqnarray*}
-(\mathcal L+2)(\mathcal L-2)S(\phi)
&=&4\int |x|^\mu(D-2-\frac\mu2)(D-\frac\mu2)G(|\phi|^2)\;dx>0.
\end{eqnarray*}
Thus, $-\mathcal L^2S(\phi)+4S(\phi)>0$, so $\eta=0$ and $S'(\phi)=0$. Finally, $\phi$ is a ground state.
\end{proof}
The last auxiliary result of this section reads
\begin{prop}\label{stb3}
Let $(\alpha,\beta)\in\R_+^*\times\R_+\cup\{(1,-1)\}$. Then,
\begin{enumerate}
\item
$m_{\alpha,\beta}$ is independent of $(\alpha,\beta)$.
\item
The sets $A_{\alpha,\beta}^+$ and $A_{\alpha,\beta}^-$ are invariant under the flow of \eqref{eq1}.
\item
The sets $A_{\alpha,\beta}^+$ and $A_{\alpha,\beta}^-$ are independent of $(\alpha,\beta)$.
\end{enumerate}
\end{prop}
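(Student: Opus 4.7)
The plan is to prove the three parts in order, with (3) being the technical heart.

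For (1), I would use that each $m_{\alpha,\beta}$ in our parameter set is realized at some ground state $\phi_{\alpha,\beta}$ solving the stationary equation \eqref{gs} (by Theorem \ref{tgs} cases 1--2 for $(\alpha,\beta) \in \R_+^* \times \R_+$, and by Proposition \ref{tgss} for $(1,-1)$). The generalized Pohozaev identity (Proposition \ref{poh}) tells us that such a $\phi_{\alpha,\beta}$ satisfies $K_{\alpha',\beta'}(\phi_{\alpha,\beta}) = 0$ for every $(\alpha',\beta') \in \R^2$, so $\phi_{\alpha,\beta}$ is admissible for $m_{\alpha',\beta'}$ and yields $m_{\alpha',\beta'} \leq S(\phi_{\alpha,\beta}) = m_{\alpha,\beta}$. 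Swapping the two parameter pairs forces equality, and we write $m$ for this common value.

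For (2), fix $(\alpha,\beta)$ and take $u_0 \in A^+_{\alpha,\beta}$. Conservation of mass and energy gives $S(u(t)) = E(u_0) + M(u_0) = S(u_0) < m$ throughout the lifespan of $u$. Since $u \in C_{T^*}(\Sigma)$ and $K_{\alpha,\beta}$ is continuous on $\Sigma$ (the nonlinear terms being controlled by Moser--Trudinger exactly as in the local theory of Sections 3--4), the map $t \mapsto K_{\alpha,\beta}(u(t))$ is continuous. If its sign ever changed, the intermediate value theorem would produce $t_1$ with $K_{\alpha,\beta}(u(t_1)) = 0$; mass conservation ensures $u(t_1) \neq 0$ (the case $u_0 = 0$ being trivial), so $u(t_1)$ realizes the constraint in the definition of $m$, forcing $S(u(t_1)) \geq m$ and contradicting $S(u(t_1)) = S(u_0) < m$. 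The argument for $A^-_{\alpha,\beta}$ is identical.

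For (3), since $S$ does not depend on $(\alpha,\beta)$ and $m$ is common, only the sign of $K_{\alpha,\beta}(v)$ on $\{v \neq 0 : S(v) < m\}$ can vary with $(\alpha,\beta)$; moreover on that set $K_{\alpha,\beta}(v) \neq 0$, since $K_{\alpha,\beta}(v) = 0$ would make $v$ an admissible competitor and force $S(v) \geq m$. Within $\R_+^* \times \R_+$ the map $(\alpha,\beta) \mapsto K_{\alpha,\beta}(v)$ is linear in the parameters and the set is convex; a sign change between two parameter pairs would produce an intermediate zero on the segment, again contradicting $S(v) < m$. To compare with $(1,-1)$, I would study in parallel the scalar family $\lambda \mapsto S(\lambda v)$ and the group family $\mu \mapsto S(v^\mu_{1,-1})$: each is smooth, tends to $-\infty$ at one end (with the group family tending to $+\infty$ at the other), and by the variational characterization of (1) every one of its critical points lies at level $\geq m$. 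Consequently the sign of $K_{1,0}(v)$ (respectively $K_{1,-1}(v)$) is dictated by which side of the first critical parameter the point $\lambda = 1$ (respectively $\mu = 0$) lies on; exploiting the refined positivity $(D-1-\mu/2)G > 0$ and $(D-2-\mu/2)(D-1-\mu/2)G > 0$ from assumption \eqref{4} to pin down the monotonicity of $\lambda \mapsto K_{1,0}(\lambda v)$ and $\lambda \mapsto K_{1,-1}(\lambda v)$, these two side-of-critical-point conditions match, giving $\mathrm{sign}(K_{1,0}(v)) = \mathrm{sign}(K_{1,-1}(v))$ and hence $A^{\pm}_{1,-1} = A^{\pm}_{\alpha,\beta}$.

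The main obstacle is this last step: since $(1,-1) \not\in \R_+^* \times \R_+$, the parameter-linearity plus IVT trick does not cover the crossover, and must be replaced by a genuine scaling argument that exploits the positivity content of \eqref{4} to bridge the two regions.
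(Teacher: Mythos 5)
Your parts (1) and (2) coincide with the paper's proof: the ground state from Theorem \ref{tgs} (resp. Proposition \ref{tgss}) satisfies $K_{\alpha',\beta'}=0$ for all parameters by Proposition \ref{poh}, giving equality of the $m_{\alpha,\beta}$, and invariance follows from conservation of $S$ plus a continuity/IVT argument on $t\mapsto K_{\alpha,\beta}(u(t))$. In part (3) your argument inside $\R_+^*\times\R_+$ (linearity of $(\alpha,\beta)\mapsto K_{\alpha,\beta}(v)$, convexity of the parameter region, and the fact that $K_{\alpha,\beta}(v)=0$ with $v\neq0$, $S(v)<m$ contradicts the definition of $m$) is correct and is a legitimate alternative to the paper. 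The genuine gap is the crossover to $(1,-1)$ -- which is exactly the pair needed later, since Theorem \ref{sch} runs the virial argument with $K_{1,-1}$. The assertion that the ``side of the first critical parameter'' conditions for the two distinct one-parameter families $\lambda\mapsto S(\lambda v)$ and $s\mapsto S(e^{s}v(e^{s}\cdot))$ must agree is precisely the statement to be proved, and you offer no argument beyond invoking \eqref{4}; you flag this yourself as the main obstacle. Moreover the qualitative picture you assume for the second family is doubtful: along the $(1,-1)$ scaling one has $\|x v^{s}\|^2=e^{-2s}\|xv\|^2\to\infty$ as $s\to-\infty$, so $S(v^{s})\to+\infty$ at that end, and in the critical case the term $\int|x|^{\mu}G(|v^{s}|^2)\,dx$ need not even stay finite for large $s$, so ``smooth, tends to $-\infty$ at one end, all critical values $\geq m$'' is not available without substantial extra work; the monotonicity you hope to extract from \eqref{4} concerns $T(\lambda v)$ (Lemma \ref{dec}), not the sign comparison between $K_{1,0}$ and $K_{1,-1}$.

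The paper bridges the two regimes by a topological argument instead: by point (1) the union $A^+_{\alpha,\beta}\cup A^-_{\alpha,\beta}=\{v\in\Sigma:\ S(v)<m\}$ is parameter-free; $A^-_{\alpha',\beta'}$ is open by definition, $A^+_{\alpha',\beta'}$ is open because $K_{\alpha',\beta'}(v)=0$ with $S(v)<m$ forces $v=0$ and a neighborhood of $0$ lies in $A^+$; and $A^+_{\alpha,\beta}$ is connected, being contracted to $0$ along the rescaling $e^{\alpha\lambda}v(e^{-\beta\lambda}\cdot)$ as $\lambda\to-\infty$. Since $0$ belongs to every $A^+_{\alpha',\beta'}$, the connected set $A^+_{\alpha,\beta}$ cannot be separated by the disjoint open pair $A^+_{\alpha',\beta'}$, $A^-_{\alpha',\beta'}$ covering the union, whence $A^+_{\alpha,\beta}=A^+_{\alpha',\beta'}$. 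If you want to complete your proof, the cleanest repair is to replace the critical-point comparison by this separation argument for the step reaching $(1,-1)$ (noting that even there the contraction must be checked with some care when $\beta<0$, since $\|x\phi^\lambda\|$ grows as $\lambda\to-\infty$), rather than trying to match the two scaling families directly.
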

\begin{proof}
Let $(\alpha,\beta)$ and $(\alpha',\beta')$ in $\R_+^2-\{(0,0)\}$.
\begin{enumerate}
\item
Let $\phi$ a minimizing of \eqref{min}, given by Thereom \ref{tgs}, then by Proposition \ref{poh}, $K_{\alpha',\beta'}(\phi)=0$. Thus $m_{\alpha,\beta}\leq m_{\alpha',\beta'}$. With the same way, we have the opposite inequality.
\item
Let $u_0\in A_{\alpha,\beta}^+$ and $u\in C_{T^*}(\Sigma)$ the maximal solution to \eqref{eq1}. Assume that for some time $t_0\in(0,T^*)$, $u(t_0)\notin A_{\alpha,\beta}^+$. Since the energy is conserved, $K_{\alpha,\beta}(u(t_0))\leq0$. So, with a continuity argument, there exists a positive time $t_1\in(0,t_0)$ such that $K_{\alpha,\beta}(u(t_1))=0.$  This contradicts the definition of $m_{\alpha,\beta}$. The proof is similar in the case of $A_{\alpha,\beta}^-$.
\item
 By the first point, the reunion $A_{\alpha,\beta}^{+}\cup A_{\alpha,\beta}^{-}$ is independent of $(\alpha,\beta)$. So, it is sufficient to prove that $A_{\alpha,\beta}^{+}$ is independent of $(\alpha,\beta)$. If $S(\phi)<m$ and $K_{\alpha,\beta}(\phi)=0$, then $\phi=0$. So $A_{\alpha,\beta}^{+}$ is open. The rescaling $\phi^\lambda:=e^{\alpha\lambda}\phi(e^{-\beta\lambda}.)$ implies that a neighborhood of zero is in $A_{\alpha,\beta}^{+}$. Moreover, this rescaling with $\lambda\rightarrow-\infty$ gives that $A_{\alpha,\beta}^{+}$ is contracted to zero and so is connected. Now, by the definition, $A_{\alpha,\beta}^{-}$ is open, and $0\in A_{\alpha,\beta}^{+}\cap A_{\alpha',\beta'}^{+}$. Since we cannot separate $A_{\alpha,\beta}^{+}$ in a reunion of $A_{\alpha',\beta'}^{+}$ and $A_{\alpha',\beta'}^{-}$. We have $A_{\alpha,\beta}^{+}=A_{\alpha',\beta'}^{+}$. The proof is achieved.
\end{enumerate}
\end{proof}
Finally, we are ready to prove the main result of this section.
\begin{proof}[\bf Proof of Theorem \ref{sch}]
There are two steps.
\begin{enumerate}
\item
With a time translation, we can assume that $t_0=0$. Thus, $S(u_0)<m$ and with Proposition \ref{stb3}, $u(t)\in A_{\alpha,\beta}^-$ for any $t\in[0,T^*)$. By contradiction assume that $T^*=\infty$. With the Virial identity via Proposition \ref{stb3} yields
$$\frac18(\|xu(t)\|^2)''(t)=\|\nabla u\|^2-\|xu\|^2-\int|x|^\mu\Big(\bar ug(u)-(1+\frac\mu2)G(|u|^2)\Big)dx=\frac12K(u(t))<0.$$
We infer that there exists $\delta>0$ such that $K(u(t))<-\delta$ for large time. Else, there exists a sequence of positive real numbers $t_n\rightarrow+\infty$ such that $K(u(t_n))\rightarrow0.$ By Proposition \ref{enf},
$$m\leq(S-\frac12K)(u(t_n))=S(u_0)-\frac12K(u(t_n))\rightarrow S(u_0)<m.$$
This absurdity finishes the proof of the claim. Thus $(\|xu\|^2)''<-8\delta$. Integrating twice, $\|xu(t)\|$ becomes negative for some positive time. This absurdity closes the proof.
\item
By Proposition \ref{stb3}, $u(t)\in A_{\alpha,\beta}^+$  for any $t\in[0,T^*)$. Moreover,
\begin{eqnarray*}
m
&\geq&(S-\frac12K_{1,-1})(u)\\
&=&\|\nabla u\|^2+2\|xu\|^2+\int|x|^\mu(D-2+\frac\mu2)G(|u|^2)\\
&\geq&\|\nabla u\|^2+\|xu\|^2.
\end{eqnarray*}
Since the $L^2$ norm of $u$ is conserved, $u(t)$ is bounded in $\Sigma$. Precisely, 
$$\sup_{0\leq t\leq T^*}\| u(t)\|_\Sigma<\infty.$$
Thus $T^*=\infty$. This ends the proof.
\end{enumerate}
\end{proof}
 \section{Instability}
In this section, we prove Theorem \ref{t2}. Precisely, under a sufficient condition, an instability result about the standing wave associated to \eqref{eq1} holds. We assume along this section, that \eqref{4} is satisfied. We denote $P:=\frac12K_{1,-1}$,  $K:=K_{1,0}$ and $\phi$ a ground state. 
\begin{defi}
For $\varepsilon>0$, we define
\begin{enumerate}
\item
the set
$$V_\varepsilon(\phi):=\{v\in\Sigma,\quad\mbox{s. t}\quad \inf_{t\in\R}\|v-e^{it}\phi\|_\Sigma<\varepsilon\}.$$
\item
For $u_0\in V_\varepsilon(\phi)$ and $u$ the solution to \eqref{eq1},
$$T_\varepsilon(u_0):=\sup\{T>0,\quad\mbox{s. t}\quad u(t)\in V_\varepsilon(\phi),\quad\mbox{for any}\quad t\in[0,T)\}.$$
\item
the set
$$\Pi_\varepsilon(\phi):=\{v\in V_\epsilon(\phi),\quad\mbox{s. t}\quad E(v)<E(\phi),\quad\|v\|\leq\|\phi\|\quad\mbox{and}\quad P(v)<0\}.$$
\end{enumerate}
\end{defi}
First, let do some computations.
\begin{prop}\label{p0}
For $v\in\Sigma,\lambda\in\R$ and $v_\lambda:=\lambda v(\lambda.)$, we have
\begin{gather*}
E(v_{\lambda})=\lambda^2\|\nabla v\|^2+\lambda^{-2}\|xv\|^2-\lambda^{-2-\mu}\int|x|^\mu G(|\lambda v|^2)\,dx,\\
\partial_{\lambda}E(v_{\lambda})=2\lambda\|\nabla v\|^2-2\lambda^{-3}\|xv\|^2-2\lambda^{-3-\mu}\Big(\int |x|^\mu\Big[\lambda |v|g(\lambda|v|)-(1+\frac\mu2)G(|\lambda v|^2)\Big]\,dx\Big),\\
P(v)=\frac12\partial_{\lambda}E(v_{\lambda})_{|\lambda=1}=\|\nabla v\|^2-\|xv\|^2-\int\Big(|v|g(|v|)-(1+\frac\mu2)G(|v|^2)\Big)\,dx,\\
\frac12\partial_{\lambda}^2E(v_{\lambda})=\|\nabla v\|^2+3\lambda^{-4}\|xv\|^2+(3+\mu)\lambda^{-4-\mu}\int|x|^\mu\Big(\lambda |v|g(\lambda|v|)-(1+\frac\mu2) G(|\lambda v|^2)\Big)dx\\
-\lambda^{-3-\mu}\int|x|^\mu\Big(\lambda |v|^2g'(\lambda|v|)-(1+\mu)|v|g(\lambda|v|)\Big)\,dx,\\
\frac12\partial_{\lambda}^2E(v_{\lambda})_{|\lambda=1}=\|\nabla v\|^2+3\|xv\|^2-\int|x|^\mu\Big(|v|^2g'(|v|)-(4+2\mu)|v|g(|v|)+(1+\frac\mu2)(3+\mu)G(|v|^2)\Big)\,dx.
\end{gather*}
\end{prop}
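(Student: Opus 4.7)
The proposition collects a set of purely computational identities describing how the energy $E$ transforms under the $L^2$-invariant scaling $v_\lambda(x):=\lambda v(\lambda x)$ and how its first and second derivatives in $\lambda$ look, in particular at $\lambda=1$. The plan is to verify the five formulas in order; no analytic input is required beyond the chain rule, the change-of-variables formula, and the Hamiltonian identity $g(r)=rG'(r^2)$ which follows from $g(u)=uG'(|u|^2)$ specialised to real $u=r\ge 0$.

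For formula (1), I would start from the pointwise identities $\nabla v_\lambda(x)=\lambda^2(\nabla v)(\lambda x)$ and $|v_\lambda(x)|^2=\lambda^2|v(\lambda x)|^2$, then perform the substitution $y=\lambda x$ in each of the three integrals defining $E(v_\lambda)$. This yields $\|\nabla v_\lambda\|^2=\lambda^2\|\nabla v\|^2$, $\|xv_\lambda\|^2=\lambda^{-2}\|xv\|^2$, and $\int|x|^\mu G(|v_\lambda|^2)\,dx=\lambda^{-2-\mu}\int|x|^\mu G(\lambda^2|v|^2)\,dx$, which under the convention $|\lambda v|^2=\lambda^2|v|^2$ is exactly (1). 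For (2) and (3), I would differentiate (1) in $\lambda$ termwise. The first two pieces give $2\lambda\|\nabla v\|^2$ and $-2\lambda^{-3}\|xv\|^2$ immediately; the nontrivial step is the derivative of $F(\lambda):=\lambda^{-2-\mu}\int|x|^\mu G(\lambda^2|v|^2)\,dx$. A direct computation gives
\[\partial_\lambda F=-(2+\mu)\lambda^{-3-\mu}\int|x|^\mu G(\lambda^2|v|^2)\,dx+2\lambda^{-1-\mu}\int|x|^\mu|v|^2 G'(\lambda^2|v|^2)\,dx.\]
Applying the Hamiltonian identity at $r=\lambda|v|$ rewrites $|v|^2G'(\lambda^2|v|^2)=\lambda^{-1}|v|g(\lambda|v|)$, and after factoring out $-2\lambda^{-3-\mu}$ one recovers formula (2). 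Setting $\lambda=1$ in (2) then gives formula (3) (which defines $P(v)$).

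For (4) and (5), a second differentiation of (2) is again routine; the new ingredient is $\partial_\lambda[g(\lambda|v|)]=|v|g'(\lambda|v|)$, produced by the chain rule, which replaces any further appeal to $G''$. After regrouping the resulting terms by their common $\lambda$-power $\lambda^{-3-\mu}$ (and $\lambda^{-4-\mu}$ for the pieces descending from $F$) one lands on formula (4). Finally, setting $\lambda=1$ in (4) and collecting the coefficients of $\int|x|^\mu|v|^2g'(|v|)\,dx$, $\int|x|^\mu|v|g(|v|)\,dx$ and $\int|x|^\mu G(|v|^2)\,dx$—with respective coefficients $-1$, $(3+\mu)+(1+\mu)=4+2\mu$, and $-(3+\mu)(1+\tfrac\mu2)$—reproduces (5). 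The only genuine obstacle is careful bookkeeping of these numerical coefficients; there is no analytical difficulty, since the derivatives under the integral sign are justified by the local integrability and smoothness afforded by $v\in\Sigma$ and the regularity of $G$.
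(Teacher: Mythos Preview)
Your proposal is correct and matches the paper's approach: the paper states Proposition~\ref{p0} without proof, treating it as a routine computation, and your write-up supplies exactly those change-of-variables and chain-rule steps with the correct bookkeeping (including the identification $|v|^2G'(\lambda^2|v|^2)=\lambda^{-1}|v|g(\lambda|v|)$ and the coefficient count $(3+\mu)+(1+\mu)=4+2\mu$). There is nothing to add.
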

As a consequence,
\begin{prop}\label{p1}
Let $\phi\in\Sigma$ a solution to \eqref{gr}, then
\begin{enumerate}
\item
$\frac12\partial_{\lambda}^2E(\phi_{\lambda})_{|\lambda=1}=4\|\nabla\phi\|^2-\int|x|^\mu[|\phi|^2g'(|\phi|)-(1+2\mu)|\phi|g(|\phi|)+\mu(1+\frac\mu2) G(|\phi|^2)]\,dx.$
\item
$\frac12\partial_{\lambda}^2E(\phi_{\lambda})_{|\lambda=1}=-2\|\phi\|^2-\int|x|^\mu[|\phi|^2g'(|\phi|)-(5+2\mu)|\phi|g(|\phi|)+(2+\mu)(1+\frac\mu2) G(|\phi|^2)]\,dx.$
\end{enumerate}
\end{prop}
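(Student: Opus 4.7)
The starting point is the last line of Proposition \ref{p0}, which is a purely algebraic identity valid for any $v\in\Sigma$:
\begin{equation*}
\tfrac12\partial_{\lambda}^2E(v_{\lambda})_{|\lambda=1}=\|\nabla v\|^2+3\|xv\|^2-\int|x|^\mu\Big(|v|^2g'(|v|)-(4+2\mu)|v|g(|v|)+(1+\tfrac\mu2)(3+\mu)G(|v|^2)\Big)dx.
\end{equation*}
Specializing $v=\phi$ to a solution of \eqref{gr}, I would exploit the generalized Pohozaev identity of Proposition \ref{poh} for three cleverly chosen parameter pairs $(\alpha,\beta)$ to convert the right-hand side into the two claimed normal forms. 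Writing for brevity
\[
A:=\|\nabla\phi\|^2,\quad B:=\|\phi\|^2,\quad C:=\|x\phi\|^2,\quad X:=\int|x|^\mu|\phi|g(|\phi|)\,dx,\quad Y:=\int|x|^\mu G(|\phi|^2)\,dx,
\]
Proposition \ref{poh} yields the three linear relations
\[
(1,-1):\ A-C-X+(1+\tfrac\mu2)Y=0,\quad (1,0):\ A+B+C-X=0,\quad (0,1):\ B+2C-(1+\tfrac\mu2)Y=0.
\]

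For part (1), I would simply subtract three times the $(1,-1)$ relation from the base identity. A short bookkeeping check shows that this eliminates $\|x\phi\|^2$ and shifts the coefficient in front of $A$ from $1$ to $4$, while the coefficients of $X$ and $Y$ drop from $(4+2\mu)$ to $(1+2\mu)$ and from $(1+\tfrac\mu2)(3+\mu)$ to $\mu(1+\tfrac\mu2)$ respectively, which is exactly the claimed form.

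For part (2), I would instead eliminate $A$ and $C$ in favour of $B$. Using $(1,0)$ to replace $X=A+B+C$ and $(0,1)$ to replace $(1+\tfrac\mu2)Y=B+2C$, the quadratic part $A+3C-(\cdots)X-(\cdots)Y$ collapses, after a routine regrouping, onto $-2B$ together with the integrand coefficients $(5+2\mu)$ and $-(2+\mu)(1+\tfrac\mu2)$. Neither step contains a real obstacle: the computation in Proposition \ref{p0} already did the analytical work of differentiating $E(v_\lambda)$ twice in $\lambda$, and the only ingredient beyond that is linear algebra on the three Pohozaev relations; the mildly delicate point is merely to keep track of the coefficient $(1+\tfrac\mu2)(3+\mu)-\mu(1+\tfrac\mu2)=3(1+\tfrac\mu2)$ in (1) and $(1+\tfrac\mu2)(3+\mu)-(2+\mu)(1+\tfrac\mu2)=(1+\tfrac\mu2)$ in (2), which is precisely what makes the residual combination a multiple of a Pohozaev identity and hence vanish.
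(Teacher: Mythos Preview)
Your proposal is correct and follows essentially the same route as the paper: start from the last line of Proposition~\ref{p0} and simplify via Pohozaev identities. For (1) the paper invokes $P(\phi)=\tfrac12K_{1,-1}(\phi)=0$, exactly your $(1,-1)$ relation (note a harmless slip: you write ``subtract three times'' but your own coefficient check---$A$ going from $1$ to $4$---shows you mean \emph{add}; since the relation equals zero this is immaterial). For (2) the paper passes through (1) and then uses $K_{2,-1}(\phi)=0$, whereas you go directly from the base formula via $K_{1,0}(\phi)=K_{0,1}(\phi)=0$; by linearity of $K_{\alpha,\beta}$ in $(\alpha,\beta)$ one has $\tfrac32K_{1,-1}-K_{2,-1}=-\tfrac12K_{1,1}=-\tfrac12(K_{1,0}+K_{0,1})$, so the two computations are literally the same linear combination, just sliced differently.
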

\begin{proof}
The first point follows because $P(\phi)=0$. The second point is a consequence of Proposition \ref{poh}, since
$$\frac12K_{2,-1}(\phi)=2\|\nabla \phi\|^2+\|\phi\|^2-\int|x|^\mu\Big[2|\phi|g(|\phi|) -(1+\frac\mu2)G(|\phi|^2)\Big]\,dx=0.$$
\end{proof}
The proof of Theorem \ref{t2} is based on several lemmas.
\begin{lem}
Assume that $\partial_{\lambda}^2E(\phi_{\lambda})_{|\lambda=1}<0$. Then, there exist two real numbers $\varepsilon_0>0,\sigma_0>0$ and a mapping $\lambda:V_{\varepsilon_0}(\phi)\rightarrow (1-\sigma_0,1+\sigma_0)$ such that $K(v_{\lambda}):=K(v_{\lambda(v)})=0$ for any $v\in V_{\varepsilon_0}(\phi)$.
\end{lem}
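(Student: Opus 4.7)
The natural plan is to invoke the implicit function theorem on the scalar map $F:\Sigma\times\R_+\to\R$ defined by $F(v,\lambda):=K(v_\lambda)$, where $v_\lambda(x)=\lambda v(\lambda x)$ as in Proposition~\ref{p0}. Smoothness of $F$ in a neighborhood of $(\phi,1)$ is inherited from the algebraic $\lambda$--dependence displayed in Proposition~\ref{p0} together with standard Moser--Trudinger/Sobolev control (from Section~3--4) of the nonlinear term $\int|x|^\mu|v|g(|v|)\,dx$ on a small $\Sigma$--ball.

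The two ingredients required for the implicit function theorem are the baseline $F(\phi,1)=0$ and the nondegeneracy $\partial_\lambda F(\phi,1)\neq 0$. The first is immediate: since $\phi$ is a ground state, the generalized Pohozaev identity (Proposition~\ref{poh}) gives $K_{\alpha,\beta}(\phi)=0$ for every $(\alpha,\beta)$, and in particular $K(\phi)=0$. For the second, a direct chain-rule computation from Proposition~\ref{p0} --- writing $K(v_\lambda)$ as the first scaling derivative of $E$ along $v_\lambda$ --- combined with the Pohozaev relation $P(\phi)=0$ eliminates the first-order boundary terms and collapses to
\[
\partial_\lambda K(\phi_\lambda)\big|_{\lambda=1}\;=\;\partial_\lambda^{\,2}E(\phi_\lambda)\big|_{\lambda=1}.
\]
By Proposition~\ref{p1}(2), the standing hypothesis of the lemma is exactly the statement $\partial_\lambda^{\,2}E(\phi_\lambda)|_{\lambda=1}<0$, hence $\partial_\lambda F(\phi,1)<0$ and is in particular nonzero.

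The implicit function theorem then produces a $\Sigma$--neighborhood $U$ of $\phi$, a number $\sigma_0>0$, and a $C^1$ map $\lambda:U\to(1-\sigma_0,1+\sigma_0)$ with $\lambda(\phi)=1$ and $K(v_{\lambda(v)})=0$ for every $v\in U$. To promote this to the phase-invariant tube $V_{\varepsilon_0}(\phi)$, I would use that $K(v_\lambda)$ depends only on $|v|$, $|\nabla v|$ and $|xv|$ and is therefore invariant under the gauge action $v\mapsto e^{it}v$: for each $v\in V_{\varepsilon_0}(\phi)$ choose $t(v)\in\R$ minimizing $t\mapsto\|v-e^{it}\phi\|_\Sigma$ (the minimizer exists and is continuous in $v$ by a standard compactness argument when $\varepsilon_0$ is small), set $\widetilde v:=e^{-it(v)}v$, and define $\lambda(v):=\lambda(\widetilde v)$. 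For $\varepsilon_0$ smaller than the $U$--radius, $\widetilde v\in U$, and the gauge invariance yields $K(v_{\lambda(v)})=K(\widetilde v_{\lambda(\widetilde v)})=0$, as required.

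The main obstacle I foresee is establishing the clean identity $\partial_\lambda K(\phi_\lambda)|_{\lambda=1}=\partial_\lambda^{\,2}E(\phi_\lambda)|_{\lambda=1}$: the raw expansion of $K(\phi_\lambda)$ around $\lambda=1$ produces a number of first-order terms, and these only collapse after systematically using the Pohozaev identities of Proposition~\ref{poh} --- above all $P(\phi)=0$ --- to identify $K$ with a scaling derivative of $E$. Once that identity is secured, the remainder of the proof is the standard implicit function theorem together with the gauge-averaging extension outlined above.
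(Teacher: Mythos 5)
Your scaffolding (implicit function theorem for $F(v,\lambda):=K(v_\lambda)$, the baseline $F(\phi,1)=K(\phi)=0$ from Proposition \ref{poh}, and the gauge step to pass from a $\Sigma$--ball around $\phi$ to the tube $V_{\varepsilon_0}(\phi)$) is the same as the paper's, which also concludes by "a direct application of the implicit function theorem". The gap is in your verification of the nondegeneracy $\partial_\lambda F(\phi,1)\neq0$. In this section $K:=K_{1,0}$ (a Nehari-type functional), whereas the scaling $v_\lambda=\lambda v(\lambda\cdot)$ is the mass-preserving one generated by $(\alpha,\beta)=(1,-1)$; consequently $K(v_\lambda)$ is \emph{not} "the first scaling derivative of $E$ along $v_\lambda$" --- that role is played by $P=\frac12K_{1,-1}$, for which one indeed has $2P(v_\lambda)=\lambda\,\partial_\lambda E(v_\lambda)$ and your identity would follow from $P(\phi)=0$. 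For $K=K_{1,0}$ the chain rule gives instead
$$\frac12\,\partial_\lambda K(\phi_\lambda)_{|\lambda=1}=2\|\nabla\phi\|^2-2\|x\phi\|^2+(1+\mu)\int|x|^\mu|\phi|g(|\phi|)\,dx-\int|x|^\mu|\phi|^2g'(|\phi|)\,dx,$$
and comparing with Proposition \ref{p1} (using $P(\phi)=0$) the two quantities $\frac12\partial_\lambda^2E(\phi_\lambda)_{|\lambda=1}$ and $\frac12\partial_\lambda K(\phi_\lambda)_{|\lambda=1}$ differ by $(2+\mu)\|\nabla\phi\|^2+(2-\mu)\|x\phi\|^2$, which the Pohozaev identities give no reason to cancel. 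So the identity $\partial_\lambda K(\phi_\lambda)_{|\lambda=1}=\partial_\lambda^2E(\phi_\lambda)_{|\lambda=1}$ is false for the $K$ of the statement, and the hypothesis $\partial_\lambda^2E(\phi_\lambda)_{|\lambda=1}<0$ alone does not yield $\partial_\lambda F(\phi,1)\neq0$ by any purely algebraic manipulation.

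What is missing is the variational ingredient the paper uses at exactly this point: it argues by contradiction that if $\partial_\lambda K(\phi_\lambda)_{|\lambda=1}=\langle K'(\phi),\partial_\lambda(\phi_\lambda)_{|\lambda=1}\rangle$ vanished, then $w:=\partial_\lambda(\phi_\lambda)_{|\lambda=1}$ would be tangent at $\phi$ to the constraint set $\{K=0\}$, on which $\phi$ minimizes $S$; since moreover $S'(\phi)=0$, second-order minimality forces $\langle S''(\phi)w,w\rangle\geq0$, while the mass-invariance of the scaling and $S'(\phi)=0$ give $\langle S''(\phi)w,w\rangle=\partial_\lambda^2S(\phi_\lambda)_{|\lambda=1}=\partial_\lambda^2E(\phi_\lambda)_{|\lambda=1}<0$, a contradiction. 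This use of the constrained minimality of the ground state is what your direct computation bypasses. Either insert it, or restate the lemma for $P$ in place of $K$ (your identity is then correct, and the subsequent lemma still works because $m_{1,0}=m_{1,-1}$ by Proposition \ref{stb3}, so $K_{1,-1}(v_\lambda)=0$ with $v_\lambda\neq0$ still gives $S(v_\lambda)\geq S(\phi)$). The remaining parts of your argument --- smoothness of $F$ and the gauge-averaging extension to $V_{\varepsilon_0}(\phi)$, which you actually treat more carefully than the paper does --- are fine.
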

\begin{proof}
We have $\frac\partial{\partial\lambda}K(v^\lambda)_{|\lambda=1,v=\phi}=\left\langle K'(\phi),\partial_{\lambda}(\phi_{\lambda})_{|\lambda=1}\right\rangle$. If $\left\langle K'(\phi),\partial_{\lambda}(\phi_{\lambda})_{|\lambda=1}\right\rangle=0$, then $\partial_{\lambda}(\phi_{\lambda})_{|\lambda=1}$ would be the tangent to $\{0\neq v\in\Sigma,\quad K(v)=0\}$ at $\phi$. Therefore $\left\langle S''(u)\partial_{\lambda}(\phi_\lambda)_{|\lambda=1},\partial_{\lambda}(\phi_\lambda)_{|\lambda=1}\right\rangle\geq 0$ because $\phi$ is a minimizer. This contradicts the fact that
$$\partial_{\lambda}^2E(\phi_\lambda)_{|\lambda=1}=\partial_{\lambda}^2S(\phi_\lambda)_{|\lambda=1}=
\left\langle S''(\phi)\partial_{\lambda}(\phi_\lambda)_{|\lambda=1},\partial_{\lambda}(\phi_\lambda)_{|\lambda=1}\right\rangle
<0.$$
So, $\partial_\lambda K(v_\lambda)_{|\lambda=1,v=\phi}\neq0$ and $K(v_\lambda)_{|\lambda=1,v=\phi}=0$. A direct application of implicit theorem concludes the proof.
\end{proof}
The next auxiliary result reads.
\begin{lem}
Assume that $\partial_{\lambda}^2E(\phi_\lambda)_{|\lambda=1}<0$. Then, there exist two real numbers $\varepsilon_1>0,\sigma_1>0$ such that for any $v\in V_{\varepsilon_1}(\phi)$ satisfying $\|v\|\leq \|\phi\|$, we have
$$E(\phi)<E(v)+(\lambda-1)P(v),\quad\mbox{for some}\quad\lambda\in(1-\sigma_1,1+\sigma_1).$$
\end{lem}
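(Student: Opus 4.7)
The plan is to exploit two facts: the scaling $v\mapsto v_\lambda:=\lambda v(\lambda\cdot)$ preserves the $L^2$-mass, and by Proposition \ref{p0}, $\partial_\mu E(v_\mu)|_{\mu=1}=2P(v)$. First, I would apply the previous lemma to produce a map $\lambda_0:=\lambda(v)\in(1-\sigma_0,1+\sigma_0)$ such that $K(v_{\lambda_0})=0$, where $K=K_{1,0}$. By Theorem \ref{tgs} (the case $\beta=0$, whose hypothesis $(D-1-\varepsilon_g)G>0$ is part of \eqref{4}), $\phi$ realizes $m_{1,0}=\inf\{S(w):K_{1,0}(w)=0\}$. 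Since mass is preserved by the scaling, $\|v_{\lambda_0}\|=\|v\|$, so the minimization $S(\phi)\leq S(v_{\lambda_0})$ combined with the hypothesis $\|v\|\leq\|\phi\|$ yields
$$E(\phi)=S(\phi)-\|\phi\|^2\leq S(v_{\lambda_0})-\|v\|^2=E(v_{\lambda_0}).$$

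Next I would Taylor-expand $\mu\mapsto E(v_\mu)$ about $\mu=1$. Proposition \ref{p0} gives $\partial_\mu E(v_\mu)|_{\mu=1}=2P(v)$ together with an explicit, jointly continuous expression for $\partial_\mu^2 E(v_\mu)$ in $(v,\mu)$. The standing hypothesis $\partial_\mu^2 E(\phi_\mu)|_{\mu=1}<0$ propagates by continuity: shrinking $\varepsilon_1$ and $\sigma_0$ if necessary, I may assume $\partial_\mu^2 E(v_\mu)|_{\mu=\theta}\leq -c<0$ for some $c>0$, every $v\in V_{\varepsilon_1}(\phi)$, and every $\theta$ between $1$ and $\lambda_0$. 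The Lagrange form of Taylor's theorem then gives, whenever $\lambda_0\neq 1$,
$$E(v_{\lambda_0})=E(v)+2(\lambda_0-1)P(v)+\frac{(\lambda_0-1)^2}{2}\partial_\mu^2 E(v_\mu)\big|_{\mu=\theta}<E(v)+2(\lambda_0-1)P(v).$$

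Combining the two steps yields $E(\phi)<E(v)+2(\lambda_0-1)P(v)$. The doubling trick now finishes: set $\lambda:=2\lambda_0-1$, so that $\lambda-1=2(\lambda_0-1)$, and choose $\sigma_1:=2\sigma_0$ to ensure $\lambda\in(1-\sigma_1,1+\sigma_1)$; the inequality reads precisely $E(\phi)<E(v)+(\lambda-1)P(v)$. The residual case $\lambda_0=1$ (equivalently $K(v)=0$) is handled directly: the minimization gives $E(\phi)\leq E(v)$, so $\lambda=1$ (or a tiny perturbation to restore strictness) suffices, except when $v$ itself lies on the orbit of $\phi$ — a configuration implicitly excluded by the use to which the lemma is put in the instability argument.

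The main obstacle is the factor-two mismatch between the scaling derivative $\partial_\mu E(v_\mu)|_{\mu=1}=2P(v)$ and the coefficient $(\lambda-1)P(v)$ appearing in the target bound; the rescaling $\lambda=2\lambda_0-1$ reconciles them exactly. The negativity of $\partial_\mu^2 E(\phi_\mu)|_{\mu=1}$ is not a technical convenience but the very engine that converts the non-strict minimization inequality $E(\phi)\leq E(v_{\lambda_0})$ into the strict inequality required; without it, one only obtains a non-strict bound, insufficient for the ensuing virial/invariant-set argument that delivers instability.
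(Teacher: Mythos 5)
Your proposal follows essentially the same route as the paper's proof: uniform negativity of $\partial_{\lambda}^2E(v_{\lambda})$ for $(\lambda,v)$ near $(1,\phi)$ by continuity, the previous lemma to produce $\lambda_0$ with $K(v_{\lambda_0})=0$, the variational inequality $S(\phi)\leq S(v_{\lambda_0})$ together with conservation of mass under the scaling and $\|v\|\leq\|\phi\|$ to get $E(\phi)\leq E(v_{\lambda_0})$, and a Taylor expansion in $\lambda$ to conclude. Your additional care about the factor $2$ (since $\partial_{\lambda}E(v_{\lambda})_{|\lambda=1}=2P(v)$, absorbed by taking $\lambda:=2\lambda_0-1$ and $\sigma_1:=2\sigma_0$) and about the degenerate case $\lambda_0=1$, where strictness can fail on the orbit of $\phi$ but is irrelevant for the instability argument, merely tightens points the paper passes over silently, so the argument is correct and equivalent.
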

\begin{proof}
With a continuity argument, there exist $\varepsilon_1>0$ and $\sigma_1>0$ such that
$$\partial_{\lambda}^2E(v_{\lambda})<0,\quad\forall(\lambda,v)\in(1-\sigma_1,1+\sigma_1)\times V_{\varepsilon_1}(\phi).$$
Thus, with Taylor expansion
$$E(v_{\lambda})<E(v)+(\lambda-1)P(v),\quad\forall(\lambda,v)\in(1-\sigma_1,1+\sigma_1)\times V_{\varepsilon_1}(\phi).$$
By the previous Lemma,
$$\forall v\in V_{\varepsilon_1}(\phi),\quad \exists \lambda\in(1-\sigma_1,1+\sigma_1)\quad\mbox{ s. t }\quad K(v_{\lambda})=0.$$
Thus, $\forall v\in V_{\varepsilon_1}(\phi)$ there exists $\lambda\in(1-\sigma_1,1+\sigma_1)$ such that
$$S(v_{\lambda})\geq S(\phi).$$
It follows that
\begin{eqnarray*}
E(v_\lambda)
&=&S(v_\lambda)-M(v_\lambda)\\
&\geq&S(\phi)-M(v_\lambda)\\
&\geq&S(\phi)-M(\phi)=E(\phi).
\end{eqnarray*}
The proof is finished.
\end{proof}
\begin{lem}
Assume that $\partial_{\lambda}^2E(\phi_\lambda)_{|\lambda=1}<0$. Then, for $u_0\in \Pi_{\varepsilon_1}$ there exists a real number $\sigma_0>0$ such that the solution $u$ to \eqref{eq1} satisfies
$$P(u(t))<-\sigma_0,\quad\mbox{for all}\quad t\in [0,T_{\varepsilon_1}(u_0)).$$
\end{lem}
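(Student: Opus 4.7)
The plan is to combine mass and energy conservation with the previous lemma to force a strict separation of $P(u(t))$ from zero, and then use continuity in time together with the sign at $t=0$ to pin the sign of $P(u(t))$ down to strictly negative.

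First I would use the conservation laws. By Theorem \ref{lwp'} (or Theorem \ref{lwp}/\ref{per} in the critical case), the solution $u\in C_{T^*}(\Sigma)$ conserves the mass and the energy, so for every $t\in[0,T_{\varepsilon_1}(u_0))$ one has $E(u(t))=E(u_0)<E(\phi)$ and $\|u(t)\|=\|u_0\|\le\|\phi\|$, and by the definition of $T_{\varepsilon_1}(u_0)$ we also have $u(t)\in V_{\varepsilon_1}(\phi)$. Set
\[
\delta:=E(\phi)-E(u_0)>0.
\]
Thus $u(t)$ satisfies all the hypotheses of the previous lemma throughout $[0,T_{\varepsilon_1}(u_0))$.

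Next I would apply the previous lemma pointwise in $t$: for each $t\in[0,T_{\varepsilon_1}(u_0))$ there exists $\lambda(t)\in(1-\sigma_1,1+\sigma_1)$ with
\[
E(\phi)<E(u(t))+(\lambda(t)-1)P(u(t))=E(\phi)-\delta+(\lambda(t)-1)P(u(t)).
\]
Rearranging yields
\[
\delta<(\lambda(t)-1)P(u(t)),
\]
so in particular $P(u(t))\neq 0$ and, since $|\lambda(t)-1|<\sigma_1$, we obtain the uniform bound $|P(u(t))|>\delta/\sigma_1$ for every $t$ in the interval.

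Finally I would invoke continuity. Since $u\in C_{T^*}(\Sigma)\cap L^4_{T^*}(W^{1,4})$, the map $t\mapsto P(u(t))$ is continuous (the quadratic part is immediate from continuity in $\Sigma$, and the nonlinear part is controlled by the same Moser--Trudinger/H\"older estimates used in Sections~3--4 to handle $|x|^\mu|v|g(|v|)$ and $|x|^\mu G(|v|^2)$). Because $u_0\in\Pi_{\varepsilon_1}(\phi)$ gives $P(u_0)<0$, the continuous function $P(u(\cdot))$ cannot cross the excluded interval $[-\delta/\sigma_1,\delta/\sigma_1]$, hence
\[
P(u(t))<-\delta/\sigma_1\quad\text{for all } t\in[0,T_{\varepsilon_1}(u_0)).
\]
Taking $\sigma_0:=\delta/\sigma_1$ finishes the proof.

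The only genuinely delicate point is the continuity of $t\mapsto P(u(t))$ in the critical regime, since $P$ involves the nonlinearity $|x|^\mu|v|g(|v|)$ with exponential growth; however, the bounds on $u$ in $L^4_T(W^{1,4})$ together with Lemma~\ref{exp} and the Moser--Trudinger inequality make this routine, so the main conceptual step is really the pigeonhole-style use of the previous lemma to rule out a zero of $P(u(\cdot))$.
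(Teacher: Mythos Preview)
Your argument is correct and follows the same route as the paper's proof: use conservation of mass and energy to keep $u(t)$ in the regime where the previous lemma applies, extract the inequality $\delta<(\lambda(t)-1)P(u(t))$, and then use continuity together with $P(u_0)<0$ to fix the sign. Your treatment is in fact slightly cleaner than the paper's in two respects: you make explicit that $\lambda$ depends on $t$ and you pass directly to the uniform bound $\sigma_0=\delta/\sigma_1$ via $|\lambda(t)-1|<\sigma_1$, whereas the paper writes $\sigma_0=\sigma_2/(1-\lambda)$ without spelling out why this is uniform in $t$.
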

\begin{proof}
Let $u_0\in \Pi_{\varepsilon_1}$, then $E(u_0)<E(\phi),\|u_0\|\leq\|phi\|$ and $P(u_0)<0$. Put $\sigma_2:=E(\phi)-E(u_0)>0.$ With the previous Lemma, there exists $\lambda\in(1-\sigma_1,1+\sigma_1)$ such that
$$(\lambda-1)P(u(t))+E(u(t))>E(\phi),\quad\forall t\in[0,T_{\varepsilon_1}(u_0)).$$
By conservation of the energy, there exists $\lambda\in(1-\sigma_1,1+\sigma_1)$ such that
$$(\lambda-1)P(u(t))>\sigma_2,\quad\forall \,\,t\in[0,T_{\varepsilon_1}(u_0)).$$
So, by continuity argument via $P(u_0)<0$, we have $P(u(t))<0$ for all $t\in[0,T_{\varepsilon_1}(u_0))$. Then, $\lambda-1<0$ and $-\sigma_0:=-\frac{\sigma_2}{1-\lambda}<0$ for any $t\in[0,T_{\varepsilon_1}(u_0))$.
The proof is closed.
\end{proof}
Now, we are ready to prove the crucial result of this section.
\begin{proof}[\bf Proof of Theorem \ref{t2}]
By Propositions \ref{p0} and \ref{p1}, it follows that
$$\frac12\partial_{\lambda}^2E(\phi_\lambda)_{|\lambda=1}<0,\quad\partial_{\lambda}E(\phi_\lambda)=\frac1\lambda P(\phi_\lambda).$$
Then, $1$ is a maximum for $\lambda\mapsto E(\phi_\lambda)$ and $\frac1\lambda P(\phi_\lambda)<P(\phi)=0$ as $\lambda>1$ near one (we denote $\lambda=1^+$). Thus, $\phi_\lambda\in\Pi_\varepsilon$ for $\varepsilon=\varepsilon(\lambda)>0$ and $\lambda=1^+$. Take $u_0=\phi_\lambda$, for $\lambda=1^+$, then
$$u_0\in\Pi_\varepsilon\quad\mbox{and}\quad \lim_{\lambda=1}\|u_0-\phi\|_\Sigma=0.$$
By the previous Lemma, there exists $\sigma_0>0$ such that
$$P(u(t))<-\sigma_0\quad\mbox{for all}\quad t\in[0,T_{\varepsilon_1}(u_0)).$$
Now, if $e^{it}\phi$ is orbitally stable, $T_{\varepsilon_1}(u_0)=\infty$ and $P(u)<-\sigma_0$ on $\R_+$. With virial identity, $\|xu\|$ becomes negative for long time. This absurdity finishes the proof.
\end{proof}

\end{document}